\documentclass[12pt,twosides]{amsart}
\usepackage{amssymb,amsmath,amsthm, amscd, enumerate, mathrsfs}
\usepackage{graphicx, hhline}
\usepackage[all]{xy}
\usepackage{braket}
\usepackage[usenames]{color}
\usepackage{hyperref}
\usepackage{fancyhdr}
\usepackage{bm}
\usepackage{url}
\usepackage[top=30truemm,bottom=30truemm,left=30truemm,right=30truemm]{geometry}

\hypersetup{colorlinks=true}

\title{Semistable reduction for complex analytic spaces}
\author{Makoto Enokizono, Kenta Hashizume}
\keywords{semistable reduction, complex analytic space}
\subjclass[2020]{14M25, 14D05}
\address{Graduate School of Mathematical Sciences, University of Tokyo,
3-8-1 Komaba, Meguro-ku, Tokyo 153-8914, Japan }\email{enokizono@g.ecc.u-tokyo.ac.jp}
\address{Department of Mathematics, Faculty of Science, Niigata University, Niigata 950-2181, Japan}\address{Institute for Research Administration, Niigata University, Niigata 950-2181, Japan}\email{hkenta@math.sc.niigata-u.ac.jp}




\newtheorem{thm}{Theorem}[section]

\newtheorem{lem}[thm]{Lemma}
\newtheorem{cor}[thm]{Corollary}
\newtheorem{prop}[thm]{Proposition}

\newtheorem{ques}[thm]{Question}

\theoremstyle{definition}
\newtheorem{defn}[thm]{Definition}
\newtheorem{rem}[thm]{Remark}

\newtheorem{exam}[thm]{Example}
\newtheorem*{ack}{Acknowledgments} 
 
\newtheorem*{b-divisor}{b-divisors} 
 
\newtheorem*{g-pair}{Generalized pairs} 
\newtheorem*{adj-g-pair}{Divisorial adjunction for generalized pairs} 
\newtheorem*{mmp-g-pair}{MMP for generalized pairs}

\newtheorem*{claim*}{Claim}

\def\C{\mathbb C}
\def\R{\mathbb R}
\def\Q{\mathbb Q}
\def\Z{\mathbb Z}

\def\O{\mathcal{O}}

\DeclareMathOperator{\Hom}{Hom}


\begin{document}

\maketitle

\begin{abstract}
Semistable reduction theorem for projective morphisms in the category of complex analytic spaces is established.
\end{abstract}

\tableofcontents

\section{Introduction}

Hironaka's resolution theorem stands as a cornerstone in both algebraic and analytic geometry.
Semistable reduction theorem, which guarantees 
the existence of relative resolutions for morphisms between varieties, is also crucial.
In essence, the semistable reduction problem seeks to find an alteration $B'\to B$ and a modification $X'\to X\times_{B}B'$ from smooth varieties for a given surjective morphism $f\colon X\to B$ between varieties such that the induced morphism $f'\colon X'\to B'$ has fibers with mild singularities.
For the case where $\dim B=1$, the semistable reduction theorem was initially established in \cite{KKMS} for both algebraic and analytic varieties.
For higher dimensional bases $B$, Abramovich and Karu \cite{AbKa} proved a weakly semistable reduction theorem within the category of algebraic varieties in characteristic $0$.
They also showed the full semistable reduction theorem can be deduced from a purely combinatorial conjecture known as the polyhedral conjecture. 
Recently, Adiprasito, Liu and Temkin \cite{ALT} proved the polyhedral conjecture and hence the semistable reduction theorem holds in the category of algebraic varieties in characteristic $0$.
In this paper, we prove the semistable reduction theorem in the category of complex analytic varieties:

\begin{thm}[Simplified version of Theorem~\ref{semistred}, see also Remark~\ref{shrink}] \label{Introsemistred}
Let $f\colon X\to B$ be a projective surjective morphism between complex analytic varieties.
Assume that $B$ is projective over a Stein space and fix any  compact subset $K$ of $B$.
Then after shrinking $B$ to an open neighbourhood of $K$, there exist a projective alterations $B'\to B$ from a smooth variety $B'$, a projective modification $X'\to X\times_{B}B'$ from a smooth variety $X'$ onto the main component of $X\times_{B}B'$ such that the projection $f'\colon X'\to B'$ is semistable.
\end{thm}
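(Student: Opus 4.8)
The plan is to transplant the algebraic semistable reduction of Abramovich--Karu \cite{AbKa}, completed by the polyhedral theorem of Adiprasito--Liu--Temkin \cite{ALT}, into the complex analytic category, using the projective-over-Stein hypothesis and the compactness of $K$ to supply the finiteness and projectivity that are automatic in the algebraic setting. First I would localize: writing the structure morphism as $B\to S$ with $S$ Stein, the image of $K$ in $S$ lies in a relatively compact Stein open subset, and after shrinking $S$ to such a subset I may assume $S$ is a relatively compact Stein space. Then $B$ carries a line bundle ample over $S$ and $X$ a line bundle ample over $B$, and the coherence and finite-generation statements needed to perform projective modifications and to construct projective toroidal subdivisions become available over the shrunk base.

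The central step is to reduce $f$ to a toroidal morphism. Using Hironaka's resolution in the analytic category I make $X$ and $B$ smooth with the discriminant of $f$ a simple normal crossing divisor; then, after a projective alteration $B'\to B$ and a projective modification of the main component of $X\times_B B'$, I arrange that the resulting $f'$ is an equidimensional toroidal morphism of toroidal embeddings without self-intersection $U_{X'}\subset X'$ and $U_{B'}\subset B'$. The morphism is then encoded by a map of polyhedral cone complexes $\Delta_{X'}\to\Delta_{B'}$ together with their integral structures, exactly as in \cite{AbKa}.

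With the toroidal structure in place the remainder is combinatorial. Applying the weak semistable reduction of \cite{AbKa} together with the polyhedral theorem of \cite{ALT} yields a projective subdivision of $\Delta_{B'}$, a sublattice (Kummer) alteration of the base, and an induced projective subdivision of $\Delta_{X'}$ which together constitute a combinatorial semistable reduction. Each such operation is realized geometrically by a toroidal modification or a toroidal finite cover, and these are assembled from the same local toric charts in the analytic category as in the algebraic one; carrying them out over the shrunk base produces the final $B''\to B$ and $X''\to X\times_B B''$ with $X''$, $B''$ smooth and $f''$ semistable. Projectivity of the maps follows by choosing the subdivisions projective, which is possible thanks to the relative ampleness provided by the localization step.

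The main obstacle is the toroidalization of the central step: in \cite{AbKa} the initial alteration and toroidal structure rest on de Jong's theory of alterations, which is fundamentally algebraic and has no direct complex analytic counterpart. I would circumvent this by exploiting the projective-over-Stein structure and relative compactness to reduce the existence of the toroidalizing alteration to the one-dimensional-base case, where the analytic semistable reduction of \cite{KKMS} is available, together with analytic resolution; the fibered, inductive nature of toroidalization then lets me assemble these inputs into an analytic alteration of $B$. Verifying that this assembly stays projective and proper over the relatively compact Stein base, and that the passage to the main component and the equidimensionality reduction behave well analytically, is the delicate point; once the toroidal form is secured, the combinatorial conclusion is formal.
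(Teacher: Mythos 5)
Your overall architecture matches the paper's: localize over a relatively compact Stein piece, reduce to a toroidal (then equidimensional, then weakly semistable) morphism, and finish with the combinatorial theorem of \cite{ALT}, realizing subdivisions and lattice alterations by projective toroidal modifications and Kawamata-type covers. You also correctly identify the one genuinely non-formal obstacle: de Jong's alteration/toroidalization input rests on resolving the indeterminacy of a moduli map, which fails in the analytic category.

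However, your proposed fix for that obstacle is where the gap lies. You suggest reducing the toroidalizing alteration to the one-dimensional-base case of \cite{KKMS} and letting ``the fibered, inductive nature of toroidalization'' assemble the result. But the induction in \cite{AbKa} descends in the \emph{relative} dimension of $f$: one projects $X\subseteq\mathbb{P}^N\times B$ linearly to fiber it into curves over an auxiliary space $P$ with $\dim P=\dim B+n-1$, and the input needed at each step is a (pre)stable reduction for a family of curves over a base of \emph{arbitrary} dimension, not over a curve. The KKMS one-dimensional statement does not assemble into this, and no mechanism for such an assembly is given. The paper supplies exactly this missing ingredient (Theorem~\ref{semistredcurve}) by a route that avoids moduli spaces altogether: (a) a ``genus goes up'' Galois cover built from linear projections to $\mathbb{P}^1$ and high-genus covers of $\mathbb{P}^1$ (Lemma~\ref{genusgoesup}), so that stable models exist as relative canonical models $\mathrm{Projan}\bigoplus_n g_*\omega_g^{\otimes n}(n\sum\sigma_i)$; (b) Kawamata's covering trick to achieve a prestable family over the complement of a codimension-two subset with simple normal crossing discriminant; and (c) the de Jong--Oort extension theorem \cite{dJOo} to extend the resulting family of stable curves across that codimension-two locus, followed by descent of the group actions. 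Without some substitute for these steps, your central toroidalization step is not established. (The paper also records, in Subsection~\ref{subsecreduction}, a shortcut via the Hilbert scheme that reduces to the algebraic case; that is a different, legitimate route, but it is not the one you describe either.)
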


Here a morphism $f\colon X\to B$ between smooth varieties is said to be {\em semistable} (with respect to $\Delta_{X}$ and $\Delta_{B}$) if there exist reduced divisors $\Delta_{X}\subseteq X$ and $\Delta_{B}\subseteq B$ with simple normal crossing supports such that $f$ can locally be written as
$$
f(x_{1},\ldots,x_{n})=(t_{1},\ldots, t_{m}),\quad t_{i}=\prod_{j=l_{i-1}+1}^{l_{i}}x_{j}
$$
for some $0=l_{0}<l_{1}<\cdots<l_{m}\le n$,
where $(x_{1},\ldots,x_{n})$ and $(t_{1},\ldots, t_{m})$ are respectively local coordinates on $X$ and $B$ such that $\Delta_{X}=\{x_{1}\cdots x_{l}=0\}$ and $\Delta_{B}=\{t_{1}\cdots t_{k}=0\}$ for some $k$ and $l\ge l_{k}$ satisfying $l_{i+1}=l_{i}+1$ for $i\ge k$.

The proof of Theorem~\ref{Introsemistred} heavily relies on the arguments in \cite{AbKa} and employs toroidal geometry.
Building upon the results of \cite{ALT}, the semistable reduction problem is reduced to the toroidal reduction problem for morphisms.
By the argument of \cite{AbKa}, it suffices to establish the semistable reduction theorem for morphisms of relative dimension one, which is the complex analytic counterpart of de Jong's results \cite{deJ}.
However, there are some obstacles to discussing in the complex analytic category.
Indeed, de Jong obtained the semistable reduction for families of curves $f\colon X\to B$ by resolving the indeterminacy of a certain moduli map $B'\dasharrow M$, where $M$ is a fine moduli space of curves with additional structure and $B'\to B$ an alteration such that the base change of $f$ to some open subspace of $B'$ admits the moduli map.
However, in the complex analytic category, there does not always exist a resolution of indeterminacy (for instance, the exponential map $\exp\colon \mathbb{C}\to \mathbb{C}\subseteq \mathbb{P}^1$ is not extended to $\mathbb{P}^1\to \mathbb{P}^1$).
To avoid this difficulty, we propose an alternate approach.
Let $f\colon X\to B$ be a projective surjective morphism between complex analytic varieties
and assume for simplicity that all irreducible components of all fibers of $f$ are curves of geometric genus at least $2$.
We first take a resolution of singularities $B'\to B$, ensuring that the discriminant locus of the base change of $f$ to $B'$ is simple normal crossings.
Next we directly construct a ``semistable reduction in codimension one''.
Consequently, we may assume that $f\colon X\to B$ defines a family of stable curves over some Zariski open subspace $U\subseteq B$ the complement of which has codimension at least $2$ and the discriminant locus of $f$ is simple normal crossings.
Finally, we apply the extension theorem of de Jong--Oort \cite{dJOo} to obtain a family of stable curves $f'\colon X'\to B$, which is a desired semistable reduction.
Notably, our proof circumvents the use of the moduli spaces of curves and morphisms. 
After the paper was completed, Dan Abramovich taught the authors a reduction of Theorem~\ref{Introsemistred} to the algebraic case. 
For details, see Subsection \ref{subsecreduction}.

The shrinking assumption and the projectivity assumption in Theorem~\ref{Introsemistred} are necessary for technical reasons.
Hence the following question naturally arises:

\begin{ques}
Can the semistable reduction theorem (Theorem~\ref{semistred}) and the toroidal reduction theorem (Theorem~\ref{toroidalred}) be established without assuming the shrinking  or the projectivity condition, while allowing for base changes by infinite discrete coverings?
\end{ques}

This question leads a more natural question:
\begin{ques}
Can the semistable reduction theorem (Theorem~\ref{semistred}) and the toroidal reduction theorem (Theorem~\ref{toroidalred}) be extended functorially?
\end{ques}
In the category of algebraic varieties (or more broadly, Noetherian schemes) in characteristic $0$,
the functorial semistable and toroidal reduction problem have been developed (e.g., \cite{Mol}, \cite{ATW}). 
Other developments in a wide category containing complex analytic varieties are explored in \cite{ATW} and \cite{BidS}.

The contents of this paper are as follows: 
Section \ref{sec2} provides some notions on toroidal geometry and finite coverings of complex analytic spaces used in this paper. 
In Section \ref{sec3}, we prove the semistable reduction theorem for families of curves in the complex analytic setting.
In Section \ref{sec4}, we prove the main results in this paper. 
We also give a reduction of Theorem~\ref{Introsemistred} to the algebraic case. 

\begin{ack} The first author was partially supported by JSPS KAKENHI Grant Number JP20K14297. The second author was partially supported by JSPS KAKENHI Grant Number JP22K13887. The authors are grateful to Professor Osamu Fujino for useful advice and informing them of the paper \cite{Kar}. The authors are also grateful to Professor Dan Abramovich for informing them of the reduction of the main theorem to the algebraic case. 
The authors thank Professor Christopher D. Hacon for a comment. 
The authors thank the referee for a suggestion. 
\end{ack}

\section{Preliminaries} \label{sec2}
In this paper, {\em complex analytic spaces} are always assumed to be Hausdorff and second countable.
{\em Analytic varieties} are reduced and irreducible complex analytic spaces. 
For the notations and definitions, see \cite{Fuj} (see also \cite{EH}). 

\begin{defn}[Projective morphism]
Let $f\colon X\to Y$ be a proper morphism of complex analytic spaces and $\mathcal{L}$ an invertible $\O_{X}$-module on $X$.
Then $\mathcal{L}$ on $X$ is said to be {\em relatively very ample over $Y$} if the natural map $f^{*}f_{*}\mathcal{L}\to \mathcal{L}$ is surjective and the induced morphism $X\to \mathbb{P}_{Y}(f_{*}\mathcal{L})$ is a closed immersion.
The invertible sheaf $\mathcal{L}$ on $X$ is called {\em relatively ample over $Y$} if for any point $y\in Y$, there exists an open neighbourhood $U$ of $y$ and a positive integer $m$ such that the restriction $\mathcal{L}^{\otimes m}|_{f^{-1}(U)}$ is relatively very ample over $U$.
The morphism $f$ is said to be {\em projective} if there exists a relatively ample sheaf on $X$ over $Y$.
\end{defn}

\begin{lem} \label{projemb}
Let $f\colon X\to B$ be a projective morphism of complex analytic spaces and assume that $B$ is projective over a Stein space $S$.
Let $W$ be any Stein compact subset of $S$.
Then after shrinking $S$ around $W$, there is a closed immersion $X \hookrightarrow \mathbb{P}^{N}\times B$ over $B$ for some $N$.
\end{lem}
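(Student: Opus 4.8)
The plan is to reduce, by a formal cancellation argument, to the construction of a closed immersion over the Stein base $S$, and then to build such an embedding from relative very ampleness together with Cartan's Theorem~A. Write $g\colon B\to S$ for the structural projective morphism and set $h:=g\circ f\colon X\to S$. Suppose we have produced a closed immersion $\iota\colon X\hookrightarrow \mathbb{P}^{N}\times S$ over $S$, and let $\phi\colon X\to\mathbb{P}^{N}$ be its first component; since $\iota$ is over $S$ its second component is the structure morphism $h$, so $\iota=(\phi,h)$. Define $u:=(\phi,f)\colon X\to \mathbb{P}^{N}\times B$, a morphism over $B$. Then
\[
(\mathrm{id}_{\mathbb{P}^{N}}\times g)\circ u=(\phi,\,g\circ f)=\iota .
\]
As $g$ is proper, $\mathrm{id}_{\mathbb{P}^{N}}\times g$ is separated, and $\iota$ is proper, so $u$ is proper by cancellation; as $\iota$ is an immersion, so is $u$. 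A proper immersion is a closed immersion, hence $u$ is the desired embedding. Thus it suffices to construct a closed immersion $X\hookrightarrow \mathbb{P}^{N}\times S$ over $S$ after shrinking $S$ around $W$.

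For this, first note that $h$ is projective: choosing an $f$-ample $\mathcal{L}$ on $X$ and a $g$-ample $\mathcal{A}$ on $B$, the sheaf $\mathcal{M}:=\mathcal{L}\otimes f^{*}\mathcal{A}^{\otimes m}$ is $h$-ample for $m\gg0$ over a neighbourhood of the compact set $W$ (the analytic relative ampleness criterion, see \cite{Fuj}). By the definition of relative ampleness, every point of $W$ has a neighbourhood $U$ and an integer $k>0$ with $\mathcal{M}^{\otimes k}$ relatively very ample over $U$. Covering $W$ by finitely many such $U_{1},\dots,U_{r}$ with integers $k_{1},\dots,k_{r}$ and passing to the common power $k:=k_{1}\cdots k_{r}$ --- a tensor power of a relatively very ample sheaf is again relatively very ample, by Veronese re-embedding --- the sheaf $\mathcal{N}:=\mathcal{M}^{\otimes k}$ is relatively very ample over each $U_{i}$. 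Since the defining conditions of relative very ampleness are local on the base, $\mathcal{N}$ is relatively very ample over $W':=U_{1}\cup\dots\cup U_{r}$. Shrinking $S$ to $W'$, and then further to a relatively compact Stein open neighbourhood of $W$ (which exists because $W$ is Stein compact), we may assume that $S$ is Stein and that $\mathcal{N}$ is relatively very ample over $S$.

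Relative very ampleness now yields a closed immersion $X\hookrightarrow \mathbb{P}_{S}(h_{*}\mathcal{N})$ over $S$, where $h_{*}\mathcal{N}$ is coherent by Grauert's direct image theorem. Because $S$ is Stein, Cartan's Theorem~A shows that $h_{*}\mathcal{N}$ is generated by its global sections; by coherence and the compactness of $W$, finitely many global sections $s_{0},\dots,s_{N}$ generate $h_{*}\mathcal{N}$ over a neighbourhood of $W$, giving, after one more shrinking of $S$, a surjection $\mathcal{O}_{S}^{\oplus(N+1)}\twoheadrightarrow h_{*}\mathcal{N}$. Such a surjection induces a closed immersion of relative projective spaces $\mathbb{P}_{S}(h_{*}\mathcal{N})\hookrightarrow \mathbb{P}_{S}(\mathcal{O}_{S}^{\oplus(N+1)})=\mathbb{P}^{N}\times S$ over $S$. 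Composing with the previous embedding gives $X\hookrightarrow \mathbb{P}^{N}\times S$ over $S$, which by the first paragraph produces the required closed immersion $X\hookrightarrow \mathbb{P}^{N}\times B$ over $B$.

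The formal reduction and the trivialization of the projective bundle are routine once the analytic inputs are available; the substantive content, and the reason for the hypotheses, is the relative Serre/ampleness package in the complex analytic category --- that an $f$-ample and a $g$-ample sheaf combine to an $h$-ample one, and that high tensor powers of an $h$-ample sheaf become relatively very ample. These statements hold only after shrinking the base to a neighbourhood of the fixed compact set $W$, and Cartan's Theorem~A requires the Stein hypothesis; this is precisely what forces the conclusion to be stated ``after shrinking $S$ around $W$''.
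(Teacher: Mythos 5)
Your proof is correct, but it is organized differently from the paper's. You first descend to the Stein base: you embed $X$ into $\mathbb{P}^{N}\times S$ over $S$ and then recover the embedding over $B$ by the cancellation argument $u=(\phi,f)$, $(\mathrm{id}\times g)\circ u=\iota$. This forces you to know beforehand that the composite $h=g\circ f$ is projective, i.e.\ that an $f$-ample sheaf twisted by a high power of the pullback of a $g$-ample sheaf is $h$-ample near $W$ --- a genuine analytic input that you correctly source to \cite{Fuj} (and, like everything here, only holds after shrinking around the compact set). The paper instead never leaves $B$: it embeds $X\hookrightarrow\mathbb{P}_{B}(f_{*}\mathcal{L}^{\otimes m})$ and then exploits the identity $\mathbb{P}_{B}(f_{*}\mathcal{L}^{\otimes m})=\mathbb{P}_{B}(f_{*}\mathcal{L}^{\otimes m}\otimes\mathcal{M}^{\otimes n})$ to twist by a $\pi$-ample sheaf $\mathcal{M}$ until the sheaf becomes globally generated (first relatively over $S$ via \cite[Chapter~IV, \S 2, Theorem~2.1]{BaSt}, then absolutely via Cartan's Theorem~A), which trivializes the projective bundle directly. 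The paper's twist trick thus sidesteps both the projectivity of the composite and the cancellation step; your route buys a cleaner conceptual reduction (``it suffices to embed over the Stein space'') at the cost of importing the relative-ampleness composition lemma. The remaining ingredients --- very ampleness of a sufficiently divisible power over a neighbourhood of the compact set, Grauert coherence, Theorem~A, and extraction of finitely many generating sections by compactness of $W$ --- are common to both arguments and you handle them correctly.
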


\begin{proof}
Let $\mathcal{L}$ be a relatively ample sheaf on $X$ over $B$.
Since $W$ is compact and $B$ is projective over $S$, there exists a positive integer $m$ such that $\mathcal{L}^{\otimes m}$ is relatively very ample over $B$ after shrinking $S$ around $W$.
Let $\mathcal{M}$ be a relatively ample sheaf on $B$ over $S$.
By the compactness of $W$ and {\cite[Chapter~IV, \S 2, Theorem~2.1]{BaSt}}, there exists a positive integer $n$ such that the natural map 
$$
\pi^{*}\pi_{*}(f_{*}\mathcal{L}^{\otimes m}\otimes \mathcal{M}^{\otimes n})\to f_{*}\mathcal{L}^{\otimes m}\otimes \mathcal{M}^{\otimes n}
$$ 
is surjective along $\pi^{-1}(W)$, where $\pi\colon B\to S$ is the structure morphism.
Since $S$ is Stein, the coherent sheaf $\pi_{*}(f_{*}\mathcal{L}^{\otimes m}\otimes \mathcal{M}^{\otimes n})$ is globally generated by Cartan's Theorem~A.
Hence $f_{*}\mathcal{L}^{\otimes m}\otimes \mathcal{M}^{\otimes n}$ is globally generated after shrinking $S$ around $W$.
Since $\pi^{-1}(W)$ is compact and $f_{*}\mathcal{L}^{\otimes m}\otimes \mathcal{M}^{\otimes n}$ is coherent, there exists a finite dimensional vector subspace $V$ of $H^{0}(B, f_{*}\mathcal{L}^{\otimes m}\otimes \mathcal{M}^{\otimes n})$ such that 
$$
V\otimes \O_{B}\to f_{*}\mathcal{L}^{\otimes m}\otimes \mathcal{M}^{\otimes n}
$$
is surjective after shrinking $S$ around $W$.
Thus the composition 
$$
X\hookrightarrow \mathbb{P}_{B}(f_{*}\mathcal{L}^{\otimes m})=\mathbb{P}_{B}(f_{*}\mathcal{L}^{\otimes m}\otimes \mathcal{M}^{\otimes n})\hookrightarrow \mathbb{P}(V)\times B
$$
is a desired closed immersion.
\end{proof}

\begin{rem}[Shrinking assumption] \label{shrink}
In this paper, we consider the situation that the base space $B$ is projective over a Stein space $S$ and fix any Stein compact subset $W$ of $S$.
Almost all results in this paper are of the form that after shrinking $S$ around $W$, the given statement holds.
The arguments of this form lead to the arguments of the form that after shrinking $B$ around any compact subset $K$ of $B$, the given statement holds.
Indeed, let $\pi\colon B\to S$ be the structure morphism.
Then $\pi(K)$ is also compact.
We take the holomorphically convex hull $W$ of $\pi(K)$.
Then $W$ is a Stein compact subset of $S$ from {\cite[Lemma~2.5]{Fuj}} and $K\subseteq \pi^{-1}(W)$.
\end{rem}

\subsection{Toroidal geometry}
In this subsection, we recall the notion of toroidal embeddings introduced in \cite{KKMS} with reference to {\cite[Section~1]{AbKa}}.

\begin{defn}[Toroidal pair]
Let $X$ be a normal analytic variety and $U\subseteq X$ a Zariski open subset.
Then the inclusion $U\subseteq X$ is called a {\em toroidal embedding}
if for any point $x\in X$, there exist a germ of a toric variety $t\in X_{\sigma}$ 
and an isomorphism $\hat{\O}_{X, x}\cong \hat{\O}_{X_{\sigma}, t}$ of complete local $\C$-algebras which sends the reduced ideal sheaf of $X\setminus U$ in $\hat{\O}_{X, x}$ to that of $X_{\sigma}\setminus T_{\sigma}$ in $\hat{\O}_{X_{\sigma}, t}$,
where we write $X_{\sigma}$ as an affine toric variety associated to a strongly convex rational polyhedral cone $\sigma$ with the open dense torus $T_{\sigma}$.
We call the germ $t\in X_{\sigma}$ a {\em local model at $x$}.
Replacing $X_{\sigma}$ by a neighbourhood of $t$ if necessary, we may assume that the orbit of $t$ is the unique closed orbit in $X_{\sigma}$ and hence we can replace $t$ with the distinguished point $x_{\sigma}$.
Let $\Delta$ be the complement of $U$ in $X$.
Then $\Delta$ is a reduced divisor since $X_{\sigma}\setminus T_{\sigma}$ is the sum of torus-invariant divisors for any local model $t\in X_{\sigma}$.
The pair $(X, \Delta)$ is called a {\em toroidal pair}.
A toroidal pair $(X, \Delta)$ is {\em strict} (or {\em without self-intersection}) if each irreducible component of $\Delta$ is normal.
\end{defn}

\begin{rem}
Note that the boundary divisor $\Delta$ may have infinite irreducible components in general.
However, from the next section onwards, we will only deal with the case of finitely many irreducible components since we shrink the base Stein space $S$ around any Stein compact subset $W$.
\end{rem}

\begin{defn}[Strata]
Let $(X, \Delta)$ be a strict toroidal pair and $\Delta=\sum_{i\in I}E_{i}$ the irreducible decomposition. 
Then all the connected components $Y_{\lambda}$ of $\cap_{j\in J}E_{j}\setminus \cup_{i\in I\setminus J}E_{i}$ for each subset $J\subseteq I$ form a stratification $X=\sqcup_{\lambda}Y_{\lambda}$.
For a stratum $Y=Y_{\lambda}$, we define
$\mathrm{Star}(Y)$ as the union of strata $Z$ such that the closure of $Z$ contains $Y$, which is Zariski open in $X$ and contains $Y$ as an analytic subset.
Let $M^{Y}$ denote the group of Cartier divisors on $\mathrm{Star}(Y)$ whose supports are contained in $\Delta$, and put $M^{Y}_{\R}:=M^{Y}\otimes_{\Z} \R$, $N^{Y}:=\Hom(M^{Y},\Z)$ and $N^{Y}_{\R}:=N^{Y}\otimes_{\Z} \R$.
Let $M^{Y}_{+}$ denote the submonoid of $M^{Y}$ generated by effective divisors and put
$$
\sigma^{Y}:=(M^{Y}_{+})^{\vee}=\{\gamma\in N^{Y}_{\R}\ |\ \langle D, \gamma\rangle \ge 0 \text{ for any $D\in M^{Y}_{+}$}\}.
$$
According to {\cite[Corollary~1, p.61]{KKMS}}, for any local model $x_{\sigma}\in X_{\sigma}$ at a point $x\in Y$ where $\sigma$ is the cone in $N_{\sigma,\R}$, there are natural identifications
$$
M^{Y}\cong M_{\sigma}/\sigma^{\bot}\cap M_{\sigma},\quad N^{Y}\cong N_{\sigma}\cap \mathrm{Span}(\sigma), \quad \sigma^{Y}\cong \sigma.
$$
\end{defn}

\begin{defn}[Toroidal morphism] \label{tormordef}
Let $(X,\Delta_{X})$ and $(Y, \Delta_{Y})$ be toroidal pairs.
Then a dominant morphism $f\colon X\to Y$ is {\em toroidal} if
for any point $x\in X$, there exist a local model $s\in X_{\sigma}$ at $x$, a local model $t\in X_{\tau}$ at $f(x)$ and a toric morphism $g\colon X_{\sigma}\to X_{\tau}$ which maps $s$ to $t$ such that the formal isomorphisms $(x\in X)\cong (s\in X_{\sigma})$ and $(f(x)\in Y)\cong (t\in X_{\tau})$ are compatible with $f$ and $g$.
\end{defn}


The following results are useful. 

\begin{prop}[{\cite[Proposition~1.5]{AbKa}}]
Let $f\colon X\to Y$ be a toroidal morphism and $x$ a point of $X$.
Then for any local model $x_{\tau}\in X_{\tau}$ at $f(x)$, there exist a local model $x_{\sigma}\in X_{\sigma}$ at $x$ and a toric morphism $g\colon X_{\sigma}\to X_{\tau}$ 
which maps $x_{\sigma}$ to $x_{\tau}$ such that the formal isomorphisms $(x\in X)\cong (x_{\sigma}\in X_{\sigma})$ and $(f(x)\in Y)\cong (x_{\tau}\in X_{\tau})$ are compatible with $f$ and $g$.
\end{prop}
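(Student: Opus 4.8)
The plan is to bootstrap from the definition of a toroidal morphism, which only provides \emph{some} compatible triple of local models and toric morphism, and then to rigidify the target end so that the prescribed local model $x_{\tau}\in X_{\tau}$ can be used. The whole point is that a local model at a given point is unique up to toric isomorphism, so passing from the target model produced by the definition to the prescribed one should cost only a toric isomorphism, which can be absorbed into the toric morphism, together with a purely formal correction, which can be absorbed into the local model at the source.

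First I would apply Definition~\ref{tormordef} at the point $x$: there exist a local model $s\in X_{\sigma_{0}}$ at $x$, a local model $t\in X_{\tau_{0}}$ at $f(x)$, and a toric morphism $g_{0}\colon X_{\sigma_{0}}\to X_{\tau_{0}}$ with $g_{0}(s)=t$ such that the formal isomorphisms $\alpha\colon(x\in X)\cong(s\in X_{\sigma_{0}})$ and $\beta\colon(f(x)\in Y)\cong(t\in X_{\tau_{0}})$ satisfy $\beta\circ f=g_{0}\circ\alpha$ formally. Writing $\beta'\colon(f(x)\in Y)\cong(x_{\tau}\in X_{\tau})$ for the formal trivialization attached to the prescribed model, I would then compare $X_{\tau_{0}}$ and $X_{\tau}$ through the composite formal isomorphism $\psi:=\beta'\circ\beta^{-1}\colon \hat{\O}_{X_{\tau_{0}},t}\cong\hat{\O}_{X_{\tau},x_{\tau}}$, which preserves the reduced boundary ideals.

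Next, I would invoke the intrinsic description of the cone of a local model: by the identifications $M^{Y}\cong M_{\sigma}/\sigma^{\bot}\cap M_{\sigma}$, $N^{Y}\cong N_{\sigma}\cap\mathrm{Span}(\sigma)$ and $\sigma^{Y}\cong\sigma$ (from Corollary~1, p.61 of \cite{KKMS}), the monoid of effective boundary Cartier divisors of the germ at $f(x)$ is intrinsic; hence $\tau_{0}$ and $\tau$ are canonically identified and $\psi$ induces the identity on this combinatorial datum. This yields a toric isomorphism $h\colon X_{\tau_{0}}\xrightarrow{\sim}X_{\tau}$ with $h(t)=x_{\tau}$ realizing the identification, and $\theta:=h^{-1}\circ\psi$ is a formal automorphism of the germ $X_{\tau_{0}}$ which preserves each irreducible boundary component and fixes every character up to a formal unit. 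I would then lift $\theta$ through the toric morphism $g_{0}$ to a formal boundary-preserving automorphism $\Theta$ of $X_{\sigma_{0}}$ with $g_{0}\circ\Theta=\theta\circ g_{0}$; this is possible because $\theta$ amounts to multiplying the torus-invariant coordinates by formal units, and such unit factors can be distributed among the source coordinates along the monomial map $g_{0}$.

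Finally I would set $X_{\sigma}:=X_{\sigma_{0}}$, $x_{\sigma}:=s$, $\alpha':=\Theta\circ\alpha$ and $g:=h\circ g_{0}$. Then $g$ is a composite of toric morphisms, hence toric, and $g(x_{\sigma})=h(t)=x_{\tau}$; moreover
$$
g\circ\alpha'=h\circ g_{0}\circ\Theta\circ\alpha=h\circ\theta\circ g_{0}\circ\alpha=\psi\circ g_{0}\circ\alpha=\psi\circ\beta\circ f=\beta'\circ f,
$$
so the prescribed trivialization $\beta'$ at $f(x)$ is compatible with $\alpha'$ and $g$, as required. The routine parts are the extraction of the initial data and the bookkeeping of the final composition; the main obstacle is the middle step, namely establishing that the change of target local model is toric up to a formal automorphism and that this automorphism lifts through $g_{0}$. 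This is exactly where the uniqueness of local models from \cite{KKMS} (the intrinsic nature of $\sigma^{Y}\cong\sigma$) and the monomial structure of toric morphisms do the essential work.
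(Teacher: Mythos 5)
The paper gives no proof of this proposition at all: it is quoted verbatim from \cite[Proposition~1.5]{AbKa}, so there is no internal argument to compare yours against, and your proof has to stand on its own. Its architecture is the standard (and, as far as I can tell, the intended) one: extract some compatible triple from Definition~\ref{tormordef}, use the intrinsic description of $\sigma^{Y}$ from \cite{KKMS} to relate the given target model to the prescribed one by a toric isomorphism $h$, and absorb the leftover formal discrepancy $\theta$ into the source model by lifting it through the toric morphism $g_{0}$. The opening and closing steps are indeed routine bookkeeping, and your final composition is correct.

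The entire mathematical content therefore sits in the lifting step, and your one-clause justification of it has two concrete holes. First, the claim that $\theta$ ``amounts to multiplying the torus-invariant coordinates by formal units'' is accurate only in the divisorial directions. The local model is taken at the distinguished point of a possibly positive-dimensional closed orbit, so $\hat{\O}_{X_{\tau_{0}},t}$ contains free formal variables along that orbit; equivalently, for $m\in\tau_{0}^{\bot}$ the character $\chi^{m}$ is already a unit, so ``unit times $\chi^{m}$'' constrains nothing, and a boundary-preserving formal automorphism can act on these directions essentially arbitrarily. Your $\Theta$ must still intertwine $g_{0}$ with $\theta$ on the pullbacks of these variables, which requires an additional formal change of coordinates on the source adapted to $g_{0}$, not just a rescaling of monomials. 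Second, even in the divisorial directions, ``distributing the unit factors'' means extending the group homomorphism $m\mapsto g_{0}^{*}(u_{m})$ from the sublattice $A^{T}M_{\tau_{0}}\subseteq M_{\sigma_{0}}$ (where $A$ is the lattice map of $g_{0}$ and $\theta^{*}\chi^{m}=u_{m}\chi^{m}$) to all of $M_{\sigma_{0}}$ with values in $\hat{\O}^{*}_{X_{\sigma_{0}},s}$; since $M_{\sigma_{0}}/A^{T}M_{\tau_{0}}$ may have torsion, this needs the divisibility of the unit group $\C^{*}\times(1+\mathfrak{m})$, which you should state. Both gaps are repairable — the divisibility is immediate for a complete local $\C$-algebra with residue field $\C$, and the free directions can be handled by a further coordinate change — but as written the step that carries all the weight is asserted rather than proved.
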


\begin{cor}[{\cite[Corollary~1.6]{AbKa}}]
Let $f\colon X\to Y$ and $g\colon Y\to Z$ be toroidal morphisms.
Then $g\circ f\colon X\to Z$ is also toroidal.
\end{cor}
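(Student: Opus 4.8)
The plan is to reduce the statement to a pointwise verification of the definition of toroidal morphism, the decisive input being Proposition~1.5, which lets us prescribe in advance the local model at the intermediate point. First I would observe that $g\circ f$ is dominant, being a composition of dominant morphisms, so only the local condition remains to be checked. Fix a point $x\in X$ and set $y:=f(x)$ and $z:=g(y)=(g\circ f)(x)$.

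Since $g$ is toroidal, applying its definition at the point $y$ produces a local model $y_{\tau}\in X_{\tau}$ at $y$, a local model $z_{\rho}\in X_{\rho}$ at $z$, and a toric morphism $q\colon X_{\tau}\to X_{\rho}$ sending $y_{\tau}$ to $z_{\rho}$, such that the formal isomorphisms $(y\in Y)\cong(y_{\tau}\in X_{\tau})$ and $(z\in Z)\cong(z_{\rho}\in X_{\rho})$ are compatible with $g$ and $q$. Now comes the key step: rather than invoking the mere existence of some local model at $y$ coming from the toroidality of $f$, I would apply Proposition~1.5 to $f$ at $x$ \emph{with the specified local model} $y_{\tau}\in X_{\tau}$ at $f(x)=y$. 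This yields a local model $x_{\sigma}\in X_{\sigma}$ at $x$ together with a toric morphism $p\colon X_{\sigma}\to X_{\tau}$ sending $x_{\sigma}$ to $y_{\tau}$ such that $(x\in X)\cong(x_{\sigma}\in X_{\sigma})$ and $(y\in Y)\cong(y_{\tau}\in X_{\tau})$ are compatible with $f$ and $p$.

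It then remains to assemble these charts. The composition $q\circ p\colon X_{\sigma}\to X_{\rho}$ is again a toric morphism, it sends $x_{\sigma}$ to $z_{\rho}$, and, because both factorizations share the \emph{same} local model $y_{\tau}\in X_{\tau}$ at $y$, the formal isomorphisms $(x\in X)\cong(x_{\sigma}\in X_{\sigma})$ and $(z\in Z)\cong(z_{\rho}\in X_{\rho})$ are compatible with $g\circ f$ and $q\circ p$. Since $x\in X$ was arbitrary, this verifies the definition and shows that $g\circ f$ is toroidal.

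I expect the main obstacle to be precisely the compatibility of the two local descriptions at the intermediate point $y$: a naive application of the definition of toroidality to $f$ and to $g$ separately would in general produce two unrelated local models $X_{\tau}$ and $X_{\tau'}$ at $y$, with no reason for the associated toric charts to glue along $y$. Proposition~1.5 removes this obstruction by granting the freedom to fix the downstairs model for $f$ to be exactly the one already dictated by the chart for $g$. Once this matching is arranged, the remaining verifications, namely that a composition of toric morphisms is toric and that compatibility of formal isomorphisms with morphisms is transitive, are routine.
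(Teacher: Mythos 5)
Your proof is correct and is exactly the argument the paper intends by citing \cite[Corollary~1.6]{AbKa}: the paper states the corollary immediately after Proposition~1.5 precisely because that proposition lets one match the local model at the intermediate point $y=f(x)$, after which composing the toric charts is routine. No gaps.
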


\begin{defn}[Rational conical polyhedral complex]
A {\em rational conical polyhedral complex}  is the data $\Sigma=(|\Sigma|, \{(\sigma, M_{\sigma})\})$ as follows:

\begin{enumerate}
\item
$|\Sigma|$ is a topological space.

\item
$\{(\sigma, M_{\sigma})\}$ is a set of pairs $(\sigma, M_{\sigma})$ such that $\sigma$ is a subset of $|\Sigma|$, which is called a {\em cone in $\Sigma$}, and $M_{\sigma}$ is a finitely generated subgroup of the vector space of continuous $\R$-valued functions on $\sigma$, which is called the {\em lattice of functions on $\sigma$}, satisfying as follows:
\begin{enumerate}
\item
For any $\Z$-basis $f_{1},\ldots,f_{n_{\sigma}}$ of $M_{\sigma}$,
the continuous map 
$$
\varphi_{\sigma}:=(f_{1}\ldots,f_{n_{\sigma}})\colon \sigma\to \R^{n_{\sigma}}
$$
is a homeomorphism onto the image $\sigma':=\varphi_{\sigma}(\sigma)$ which is a full-dimensional strongly convex rational polyhedral cone in $\R^{n_{\sigma}}$.
Moreover, for any face $\tau'\preceq \sigma'$, there exists a pair $(\tau, M_{\tau})$ such that $\tau=\varphi_{\sigma}^{-1}(\tau')$.
In this case, we denote by $\tau\preceq \sigma$ and $\tau$ is called a {\em face of $\sigma$}.

\item
For any $\tau\preceq \sigma$, the group $M_{\tau}$ coincides with the set of restricted functions $f|_{\tau}\colon \tau\to \R$ for $f\in M_{\sigma}$.
In particular, there is a natural surjective homomorphism $M_{\sigma}\to M_{\tau}$.

\item
$|\Sigma|$ is the union of cones $\sigma$ and the intersection $\sigma\cap \sigma'$ for any cones $\sigma$ and $\sigma'$ is a finite union of faces of both $\sigma$ and $\sigma'$.
\end{enumerate}

\end{enumerate}
Put $N_{\sigma}:=\Hom(M_{\sigma},\Z)$ and call it the {\em lattice of $\sigma$}.
Note that the cone $\sigma$ naturally embeds into $N_{\sigma,\R}:=N_{\sigma}\otimes \R$ by the composite map $\sigma\cong \sigma'\subseteq \R^{n_{\sigma}}\cong N_{\sigma,\R}$, where the last map is determined by the basis $f_{1},\ldots, f_{n_{\sigma}}$.
This embedding $\sigma\subseteq N_{\sigma,\R}$ does not depend on the choice of the basis $f_{1},\ldots, f_{n_{\sigma}}$.
The condition (b) is equivalent to $N_{\tau}=N_{\sigma}\cap \mathrm{Span}(\tau)\subseteq N_{\sigma}$.
Hence we can describe $\Sigma$ as the pairs $(\sigma, N_{\sigma})$ with $\sigma\subseteq N_{\sigma, \R}$ satisfying certain conditions instead of the pairs $(\sigma, M_{\sigma})$.
\end{defn}

\begin{exam}
Let $X_{\Sigma}$ be a toric variety associated to a fan $\Sigma$ in $N_{\R}$.
Then the associated fan $\Sigma$ can be considered as a rational conical polyhedral complex as
$$
|\Sigma|=\bigcup_{\sigma\in \Sigma}\sigma,\quad N_{\sigma}=N\cap \mathrm{Span}(\sigma),\quad M_{\sigma}=\Hom(N_{\sigma},\Z)\cong M/\sigma^{\bot}\cap M.
$$
\end{exam}

\begin{thm}[{\cite[p.71]{KKMS}}]
Let $(X,\Delta)$ be a strict toroidal pair.
Then the data 
$$
\Sigma_{(X,\Delta)}:=(|\Sigma_{(X,\Delta)}|, \{(\sigma^{Y},M^{Y})\}_{Y})
$$
 is a rational conical polyhedral complex, where $Y$ runs through all the strata of $(X, \Delta)$,
the restriction map $M^{Y}\to M^{Z};\ D\mapsto D|_{\mathrm{Star}(Z)}$ is surjective for any strata $Y$ and $Z$ with $Y\subseteq \overline{Z}$ and
$|\Sigma_{(X,\Delta)}|$ is the gluing of $\sigma^{Y}$ for all strata $Y$.
\end{thm}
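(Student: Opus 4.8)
The plan is to verify that the data $\Sigma_{(X,\Delta)}$ satisfies all the axioms of a rational conical polyhedral complex by reducing everything to the local toric models, where the statements are classical facts about fans. The key input is the previously recorded identifications $M^{Y}\cong M_{\sigma}/\sigma^{\bot}\cap M_{\sigma}$, $N^{Y}\cong N_{\sigma}\cap \mathrm{Span}(\sigma)$, and $\sigma^{Y}\cong \sigma$ coming from \cite[Corollary~1, p.61]{KKMS} attached to any local model $x_{\sigma}\in X_{\sigma}$ at a point $x\in Y$. First I would fix a stratum $Y$ and choose a $\Z$-basis $f_{1},\dots,f_{n_{\sigma}}$ of $M^{Y}$; under the identification this is a basis of $M_{\sigma}/\sigma^{\bot}\cap M_{\sigma}$, and the associated map $\varphi_{\sigma^{Y}}=(f_{1},\dots,f_{n_{\sigma}})$ is exactly the dual-space embedding that realizes $\sigma^{Y}\cong\sigma$ as a full-dimensional strongly convex rational polyhedral cone inside $\R^{n_{\sigma}}$. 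This gives axiom (a) once I check that the cone is strongly convex, which follows from the fact that $\sigma$ is strongly convex in the toric model. So the homeomorphism-onto-a-cone condition is essentially immediate from the local toric description.

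The main substantive work is to identify faces intrinsically with other strata and to show the gluing is well defined. For axiom (a)'s face condition, given a face $\tau'\preceq\sigma'$ I would use the correspondence between faces of $\sigma$ and torus orbits of $X_{\sigma}$: a face $\tau'$ of the cone corresponds to an orbit in the closure of the orbit of $x_{\sigma}$, which in the toroidal picture corresponds to a stratum $Z$ with $Y\subseteq\overline{Z}$. Concretely, I would show that the strata $Z$ with $Y\subseteq\overline{Z}$ are precisely indexed by the faces of $\sigma^{Y}$, and that under this indexing $\sigma^{Z}\cong\tau$ and $\tau=\varphi_{\sigma^{Y}}^{-1}(\tau')$. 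The compatibility statement in the theorem—that the restriction map $M^{Y}\to M^{Z};\,D\mapsto D|_{\mathrm{Star}(Z)}$ is surjective—is what I would use to verify axiom (b): under the local identifications this restriction map is the canonical projection $M_{\sigma}/\sigma^{\bot}\cap M_{\sigma}\to M_{\tau}/\tau^{\bot}\cap M_{\tau}$, which is exactly restriction of functions from $\sigma$ to $\tau$, hence surjective with $M_{\tau}$ equal to the restricted functions. Dually, axiom (b) in the form $N^{Z}=N^{Y}\cap\mathrm{Span}(\sigma^{Z})$ follows from the $N$-side identification $N^{Y}\cong N_{\sigma}\cap\mathrm{Span}(\sigma)$.

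For axiom (c) I would assemble the local pieces: $|\Sigma_{(X,\Delta)}|$ is defined as the gluing of the cones $\sigma^{Y}$ along their common faces, where two cones $\sigma^{Y}$ and $\sigma^{Z}$ are glued along $\sigma^{W}$ whenever $W$ is a stratum with both $Y\subseteq\overline{W}$ and $Z\subseteq\overline{W}$, corresponding to a common face. The intersection condition—that $\sigma^{Y}\cap\sigma^{Z}$ is a finite union of common faces—reflects the fact that in the toroidal setting two strata can meet along several strata (this is exactly the non-strictness phenomenon that the strictness hypothesis tames, guaranteeing each irreducible component of $\Delta$ is normal so that the gluing data is consistent). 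The hardest part will be making the gluing globally coherent, namely checking that the face identifications coming from different choices of local models agree, so that the glued space $|\Sigma_{(X,\Delta)}|$ is well defined independently of choices; this is where I expect to lean most heavily on \cite[Corollary~1, p.61]{KKMS} and the intrinsic (model-free) description of $M^{Y}$, $N^{Y}$, and $\sigma^{Y}$ as data attached to $\mathrm{Star}(Y)$ rather than to a particular local model, since their intrinsic definition forces the transition maps between overlapping charts to be the identity on the overlaps.
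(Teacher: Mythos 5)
The paper does not prove this statement at all: it is quoted verbatim as a known theorem of Kempf--Knudsen--Mumford--Saint-Donat, with the citation \cite[p.71]{KKMS} doing all the work, so there is no in-paper argument to compare yours against. Your outline reconstructs the standard KKMS proof, and its main lines are correct: axiom (a) reduces to the local model via the identifications $M^{Y}\cong M_{\sigma}/\sigma^{\bot}\cap M_{\sigma}$ and $\sigma^{Y}\cong\sigma$ (strong convexity and full-dimensionality of $\sigma'$ are inherited from $\sigma$, and $\varphi_{\sigma}$ is the restriction of a linear isomorphism, hence automatically a homeomorphism onto its image); the face condition comes from the orbit--face correspondence; axiom (b) is the surjectivity of $M^{Y}\to M^{Z}$, which under the local identifications is restriction of linear functions from $\sigma$ to a face $\tau$; and the gluing in (c) is canonical because $M^{Y}$, $N^{Y}$, $\sigma^{Y}$ are defined intrinsically from Cartier divisors on $\mathrm{Star}(Y)$ rather than from a chosen chart. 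Two points deserve tightening. First, your phrase ``an orbit in the closure of the orbit of $x_{\sigma}$'' has the containment backwards: the orbit of $x_{\sigma}$ is the unique closed orbit, and a face $\tau\preceq\sigma$ corresponds to the orbit $O(\tau)$ whose \emph{closure contains} $O(\sigma)$; the conclusion you draw ($Z$ with $Y\subseteq\overline{Z}$, i.e.\ $Z\subseteq\mathrm{Star}(Y)$) is nevertheless the right one. Second, the role of strictness is not merely that the gluing data is ``consistent'': without normality of the components of $\Delta$, a single irreducible component can meet $\mathrm{Star}(Y)$ in several local branches, so the intrinsic group $M^{Y}$ of Cartier divisors would fail to match the local model's $M_{\sigma}/\sigma^{\bot}\cap M_{\sigma}$ (one gets a quotient identifying branches), and the correspondence between faces of $\sigma^{Y}$ and strata of $\mathrm{Star}(Y)$ breaks down; this is precisely where the identification of \cite[Corollary~1, p.61]{KKMS} uses the no-self-intersection hypothesis, so you should make that dependence explicit rather than treating strictness as a consistency bookkeeping device. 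With those clarifications your plan would yield a complete proof along the same lines as the cited source.
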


\begin{defn}[Morphism of polyhedral complexes]
Let $\Sigma=(|\Sigma|, \{(\sigma, M_{\sigma})\})$ and $\Sigma'=(|\Sigma'|, \{(\sigma', M_{\sigma'})\})$ be rational conical polyhedral complexes.
We define a {\em morphism of rational conical polyhedral complexes} $\varphi\colon \Sigma\to \Sigma'$ as a continuous map $\varphi\colon |\Sigma|\to |\Sigma'|$ satisfying that for any cone $\sigma$ in $\Sigma$, there exists a cone $\sigma'$ in $\Sigma'$ such that $\varphi(\sigma)\subseteq \sigma'$ and the composite function $f\circ \varphi|_{\sigma}\colon \sigma\to \R$ belongs to $M_{\sigma}$ for each $f\in M_{\sigma'}$.
Note that in this case, the dual $\varphi_{\sigma}\colon N_{\sigma}\to N_{\sigma'}$ of the induced homomorphism $M_{\sigma'}\to M_{\sigma}$ satisfies 
$\varphi_{\sigma, \R}|_{\sigma}=\varphi|_{\sigma}$ as a map from $\sigma$ to $\sigma'$.
If, moreover, $|\Sigma'|=|\Sigma|$ holds and the continuous map $\varphi$ and all homomorphisms between lattices are identity maps, then $\Sigma'$ is called a {\em subdivision of $\Sigma$}.

For a toroidal morphism $f\colon X\to Y$ between strict toroidal pairs $(X,\Delta_{X})$ and $(Y, \Delta_{Y})$, the induced morphism $\varphi_{f}\colon \Sigma_{(X,\Delta_{X})}\to \Sigma_{(Y,\Delta_{Y})}$ of the associated polyhedral complexes can be defined as follows (cf.\ {\cite[Lemma~1.8]{AbKa}}):
Let $x\in X$ be a point, $x_{\sigma}\in X_{\sigma}$ and $x_{\tau}\in X_{\tau}$ local models at $x$ and $f(x)$, and $g\colon X_{\sigma}\to X_{\tau}$ a toric morphism compatible with their local models.
Let $Z\subseteq X$ and $W\subseteq Y$ denote the strata containing $x$ and $f(x)$, respectively.
Since the toric morphism $g$ sends the orbit of $x_{\sigma}$ into the orbit of $x_{\tau}$, we have $f(Z)\subseteq W$ and hence $f(\mathrm{Star}(Z))\subseteq \mathrm{Star}(W)$.
Then we can define the homomorphism $f^{*}\colon M^{W}\to M^{Z}$ of lattices by the pullback of Cartier divisors, which is compatible with $g^{*}\colon M_{\tau}\to M_{\sigma}$.
It induces the maps $f^{*}\colon M_{+}^{W}\to M_{+}^{Z}$ and $\varphi^{Z}=(f^{*})^{\vee}\colon \sigma^{Z}\to \sigma^{W}$.
These maps $\{\varphi^{Z}\}_{Z}$ are glued together to define the continuous map $\varphi_{f}\colon |\Sigma_{(X,\Delta_{X})}|\to |\Sigma_{(Y,\Delta_{Y})}|$,
which defines a morphism of their polyhedral complexes.
\end{defn}

\begin{lem} \label{subdivmodif}
Let $(X, \Delta_{X})$ be a strict toroidal pair.
Then there is a one-to-one correspondence between the isomorphism classes of proper toroidal morphisms $f\colon X'\to X$ from a toroidal pair $(X', \Delta_{X'})$ with $X'\setminus \Delta_{X'}\cong X\setminus \Delta_{X}$
and subdivisions $\Sigma'$ of the associated polyhedral complex $\Sigma_{(X, \Delta_{X})}$.
\end{lem}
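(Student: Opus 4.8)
The plan is to reduce everything to the toric dictionary between subdivisions of a cone and proper birational toric morphisms (\cite[Chapter~I]{KKMS}), to carry out the construction chart by chart over the strata, and then to glue analytically. The correspondence will send a proper toroidal morphism $f$ to the subdivision realized by the induced map $\varphi_f$ of polyhedral complexes, and a subdivision $\Sigma'$ to the modification obtained by patching local toric modifications.

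Starting from a subdivision $\Sigma'$ of $\Sigma:=\Sigma_{(X,\Delta_X)}$, I would build $f\colon X'\to X$ as follows. For a point $x\in X$ on the stratum $Y$, fix a local model $x_{\sigma}\in X_{\sigma}$ with $\sigma\cong\sigma^{Y}$ inside $N^{Y}_{\R}\cong N_{\sigma,\R}$. The cones of $\Sigma'$ contained in $\sigma^{Y}$ form a fan $\Sigma'_{\sigma}$ subdividing $\sigma$, and by the toric case this determines a proper toric morphism $g_{\sigma}\colon X_{\Sigma'_{\sigma}}\to X_{\sigma}$ that is an isomorphism over the torus $T_{\sigma}=X_{\sigma}\setminus\Delta$. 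These $g_{\sigma}$ serve as local models for $f$: I would use them to produce analytic modifications of the Zariski open charts $\mathrm{Star}(Y)$ and patch them along overlaps. The compatibility needed for patching is combinatorial: whenever $Y\subseteq\overline{Z}$, the surjective restriction $M^{Y}\to M^{Z}$ dualizes to the face embedding $\sigma^{Z}\preceq\sigma^{Y}$, under which $\Sigma'$ restricts compatibly, so the toric modification attached to $Z$ is the restriction of the one attached to $Y$. Patching yields a complex analytic space $X'$ with boundary the reduced divisor $\Delta_{X'}:=(f^{-1}(\Delta_{X}))_{\mathrm{red}}$; since each $g_{\sigma}$ is an isomorphism over $T_{\sigma}$, one gets $X'\setminus\Delta_{X'}\cong X\setminus\Delta_{X}$, the pair $(X',\Delta_{X'})$ is again strict toroidal with the $g_{\sigma}$ as local models, $f$ is toroidal by construction, and $f$ is proper because the $g_{\sigma}$ are proper and $|\Sigma'|=|\Sigma|$. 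By construction $\varphi_{f}=\Sigma'$.

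For the reverse assignment, a proper toroidal morphism $f\colon X'\to X$ with $X'\setminus\Delta_{X'}\cong X\setminus\Delta_{X}$ induces the morphism of complexes $\varphi_{f}\colon \Sigma_{(X',\Delta_{X'})}\to\Sigma$ defined above, and I would check that it exhibits $\Sigma_{(X',\Delta_{X'})}$ as a subdivision of $\Sigma$. Working in a local toric model $g\colon X_{\rho}\to X_{\tau}$ compatible with $f$, the isomorphism over the open parts forces $g$ to be birational and an isomorphism over the big torus; hence each cone $\rho$ of $\Sigma_{(X',\Delta_{X'})}$ is carried by $\varphi_{f}$ injectively into a cone $\tau$ of $\Sigma$, and the pullback of monomials identifies $M_{\rho}$ with the restriction of $M_{\tau}$ to $\varphi_{f}(\rho)$ --- exactly the lattice condition required of a subdivision. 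Properness of $f$ forces the images of the cones of $\Sigma_{(X',\Delta_{X'})}$ to cover each cone of $\Sigma$, so that $\varphi_{f}$ is a homeomorphism of supports $|\Sigma_{(X',\Delta_{X'})}|\cong|\Sigma|$. Thus $\varphi_{f}$ is a subdivision. That the two assignments are mutually inverse up to isomorphism over $X$ follows once more from the toric case: a toroidal modification is determined up to unique isomorphism by its map of complexes, and this uniqueness is preserved under the analytic patching.

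The main obstacle I anticipate is precisely this analytic gluing. Since the local models furnish only the formal identifications $\hat{\O}_{X,x}\cong\hat{\O}_{X_{\sigma},x_{\sigma}}$, one must verify that the local toric modifications genuinely patch to a Hausdorff, second countable complex analytic space and to an honest proper morphism $X'\to X$, that the construction is locally finite when $\Delta_{X}$ has infinitely many components, and that the resulting toroidal charts on $X'$ agree on overlaps. The point is that toroidal modifications are combinatorially rigid: the transition data for the $g_{\sigma}$ are induced from the transition data of $X$ itself through the globally defined subdivision $\Sigma'$, so the cocycle conditions reduce to the corresponding statements for $\Sigma'$ on $\Sigma_{(X,\Delta_{X})}$, which hold automatically. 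Once this patching is set up, the remaining verifications --- strictness of $(X',\Delta_{X'})$, properness from $|\Sigma'|=|\Sigma|$, and the inverse property --- are parallel to \cite{KKMS} and \cite{AbKa}.
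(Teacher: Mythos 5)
The direction from a proper toroidal morphism to a subdivision matches the paper's argument: \'{e}taleness over the torus of the local toric models forces $N_{\sigma',\R}=N_{\sigma,\R}$, hence $\varphi_f$ is injective on cones, and properness gives surjectivity of supports. That half of your proposal is fine.

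The gap is in the construction of $X'$ from a subdivision $\Sigma'$. You propose to take the toric modifications $g_{\sigma}\colon X_{\Sigma'_{\sigma}}\to X_{\sigma}$ of the local models and ``use them to produce analytic modifications of the Zariski open charts $\mathrm{Star}(Y)$,'' and you yourself flag the analytic gluing as the main obstacle---but your resolution (the cocycle conditions ``reduce to the corresponding statements for $\Sigma'$\dots which hold automatically'') does not address the actual difficulty. The local models furnish only formal isomorphisms $\hat{\O}_{X,x}\cong\hat{\O}_{X_{\sigma},x_{\sigma}}$; there is in general no analytic isomorphism between a neighbourhood of $x$ in $X$ and a neighbourhood of $x_{\sigma}$ in $X_{\sigma}$, so a toric modification of $X_{\sigma}$ cannot simply be transported to an analytic modification of any open subset of $X$. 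The issue is not the consistency of transition data between charts; it is the existence of even one analytic chart realizing the prescribed toric modification. The paper circumvents this with a global construction: for each $\sigma'\in\Sigma'$ with $\sigma'\subseteq\sigma^{Y}$ it sets
$$
X_{\sigma',Y}:=\mathrm{Specan}_{\mathrm{Star}(Y)}\Bigl(\sum_{D\in \sigma'^{\vee}\cap M^{Y}}\O_{\mathrm{Star}(Y)}(-D)\Bigr),
$$
exploiting the fact that $M^{Y}$ consists of honest Cartier divisors on the Zariski-open set $\mathrm{Star}(Y)$---genuinely analytic data, not formal data. This relative $\mathrm{Specan}$ of the divisorial monoid algebra is an analytic space proper over $\mathrm{Star}(Y)$ by construction, agrees formally-locally with the toric modification you want (via $M^{Y}\cong M_{\sigma}/\sigma^{\bot}\cap M_{\sigma}$), and the resulting spaces glue over the cover $\{\mathrm{Star}(Y)\}_{Y}$ for elementary reasons. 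Supplying this device (or an equivalent one) is the missing idea; without it the forward direction of your correspondence is not established.
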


\begin{proof}
Let $f\colon X'\to X$ be a proper toroidal morphism from a toroidal pair $(X', \Delta_{X'})$ satisfying $X'\setminus \Delta_{X'}\cong X\setminus \Delta_{X}$.
Note that $(X', \Delta_{X'})$ is automatically strict since so is $(X,\Delta_{X})$ and each exceptional prime divisor of $f$ is normal which is due to the toric case. 
Let $\varphi_{f}\colon \Sigma_{(X', \Delta_{X'})}\to \Sigma_{(X, \Delta_{X})}$ denote the associated morphism of polyhedral complexes.
For any cone $\sigma'\in \Sigma_{(X', \Delta_{X'})}$ and $\sigma \in \Sigma_{(X, \Delta_{X})}$ with $\varphi_{f}(\sigma')\subseteq \sigma$, the corresponding toric morphism $g\colon X_{\sigma'}\to X_{\sigma}$ between local models is \'{e}tale over the torus of $X_{\sigma}$ by the assumption $X'\setminus \Delta_{X'}\cong X\setminus \Delta_{X}$.
It follows that $N_{\sigma',\R}=N_{\sigma, \R}$ and hence $\varphi_{f}$ is injective.
Since $f$ is proper, $\varphi_{f}$ is surjective. 
Thus $\Sigma':=\Sigma_{(X', \Delta_{X'})}$ is a subdivision of $\Sigma_{(X, \Delta_{X})}$.

Let $\Sigma'$ be a subdivision of $\Sigma_{(X, \Delta_{X})}$.
For each cone $\sigma'\in \Sigma'$ and $\sigma^{Y}\in \Sigma_{(X, \Delta_{X})}$ with $\sigma'\subseteq \sigma^{Y}$, we define an analytic variety
$$
X_{\sigma', Y}:=\mathrm{Specan}_{\mathrm{Star}(Y)}\left(\sum_{D\in \sigma'^{\vee}\cap M^{Y}}\O_{\mathrm{Star}(Y)}(-D) \right).
$$
Let $f_{\sigma', Y}\colon X_{\sigma', Y}\to \mathrm{Star}(Y)$ be the natural projection and put $\Delta_{\sigma', Y}:=f_{\sigma', Y}^{-1}(\Delta_{X})$.
One can easily check that these data $\{f_{\sigma', Y}\colon X_{\sigma', Y}\to \mathrm{Star}(Y), \Delta_{\sigma', Y}\}_{\sigma',\sigma^{Y}}$ are glued together to a proper toroidal morphism $f\colon X'\to X$ from a strict toroidal variety $(X',\Delta_{X'})$ with $\Sigma_{(X',\Delta_{X'})}=\Sigma'$.
\end{proof}

\begin{lem} [{\cite[Lemma~1.11]{AbKa}}] \label{subdivtoroidal}
Let $f\colon X\to B$ be a toroidal morphism between strict toroidal pairs and $\varphi_{f}\colon \Sigma_{X}\to \Sigma_{B}$ the associated morphism of polyhedral complexes.
Let $X'\to X$ and $B'\to B$ be toroidal modifications associated to subdivisions $\Sigma_{X'}$ and $\Sigma_{B'}$ of $\Sigma_{X}$ and $\Sigma_{B}$, respectively.
Then there exists a toroidal morphism $f'\colon X'\to B'$ which is a lifting of $f$ if and only if for each $\sigma \in \Sigma_{X'}$, there exists a cone $\tau\in \Sigma_{B'}$ such that $\varphi_{f}(\sigma)\subseteq \tau$.
\end{lem}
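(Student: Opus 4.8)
The plan is to prove the two implications separately: the ``only if'' direction is a formal consequence of the definitions, while the ``if'' direction carries the real content, where a lift must be built from local toric data and then glued. For necessity, suppose a toroidal lift $f'\colon X'\to B'$ exists. Being toroidal, it induces a morphism of polyhedral complexes $\varphi_{f'}\colon \Sigma_{X'}\to \Sigma_{B'}$. Since $X'\to X$ and $B'\to B$ are the toroidal modifications attached to subdivisions, one has $|\Sigma_{X'}|=|\Sigma_{X}|$ and $|\Sigma_{B'}|=|\Sigma_{B}|$, with the structure morphisms inducing the identity on underlying spaces. The hypothesis that $f'$ lifts $f$ forces the relevant square to commute, hence $\varphi_{f'}$ and $\varphi_{f}$ agree as continuous maps $|\Sigma_{X}|\to |\Sigma_{B}|$. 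By the definition of a morphism of polyhedral complexes, for each cone $\sigma\in \Sigma_{X'}$ there is a cone $\tau\in \Sigma_{B'}$ with $\varphi_{f'}(\sigma)\subseteq \tau$; since $\varphi_{f'}=\varphi_{f}$, this is precisely the asserted combinatorial condition.

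For sufficiency I would construct $f'$ chart by chart using the $\mathrm{Specan}$ description from the proof of Lemma~\ref{subdivmodif}. Fix a cone $\sigma'\in \Sigma_{X'}$ contained in a cone $\sigma^{Y}$ for a stratum $Y\subseteq X$, and let $W\subseteq B$ be the stratum with $f(Y)\subseteq W$, so that $\varphi_{f}(\sigma^{Y})\subseteq \sigma^{W}$ and $f$ restricts to $\mathrm{Star}(Y)\to \mathrm{Star}(W)$. By hypothesis there is $\tau\in \Sigma_{B'}$ with $\varphi_{f}(\sigma')\subseteq \tau$, and I would take $\tau$ minimal with this property, namely the face of any such cone spanned by $\varphi_{f}(\sigma')$, which makes the choice canonical. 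The key point is the dual reading of the containment $\varphi_{f}(\sigma')\subseteq \tau$: since $\varphi_{f}|_{\sigma'}$ is the linear map dual to the pullback $f^{*}\colon M^{W}\to M^{Y}$, for every $E\in \tau^{\vee}\cap M^{W}$ and every $\gamma\in \sigma'$ one has $\langle f^{*}E,\gamma\rangle=\langle E,\varphi_{f}(\gamma)\rangle\ge 0$, whence $f^{*}E\in (\sigma')^{\vee}\cap M^{Y}$. Therefore $f^{*}$ together with pullback along $f$ carries the sheaf of algebras $\sum_{E\in \tau^{\vee}\cap M^{W}}\O_{\mathrm{Star}(W)}(-E)$ into $\sum_{D\in (\sigma')^{\vee}\cap M^{Y}}\O_{\mathrm{Star}(Y)}(-D)$, yielding a morphism of the associated charts $X_{\sigma',Y}\to B_{\tau,W}$ over $f$. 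Comparing with Definition~\ref{tormordef}, this morphism is toroidal, since on local models it is exactly the toric morphism attached to the lattice map $f^{*}$ and the cone inclusion $\sigma'\hookrightarrow \tau$.

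Finally I would glue these local lifts into a global toroidal morphism $f'\colon X'\to B'$. All the charts $X_{\sigma',Y}\to B_{\tau,W}$ are induced by the single pullback $f^{*}\colon M^{W}\to M^{Y}$ combined with restriction of functions, so they are automatically compatible with the open inclusions arising from faces: passing from a cone $\sigma'$ to one of its faces enlarges the chart $X_{\sigma',Y}$ to an open subset and replaces the minimal target cone $\tau$ by one of its faces, hence enlarges $B_{\tau,W}$ compatibly, and the two induced morphisms agree there because both descend from $f^{*}$. The compatibility across different strata is supplied by the surjective restriction maps $M^{Y}\to M^{Z}$ recorded in $\Sigma_{(X,\Delta_{X})}$. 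Thus the charts patch to a toroidal morphism $f'\colon X'\to B'$, and it lifts $f$ by construction (indeed the lift is unique, being determined over the common open tori). I expect this gluing to be the main obstacle: one must verify that the canonical choice of target cone $\tau$ is stable under passage to faces and under the strata incidence maps, so that the charts of $X'$ land compatibly in the charts of $B'$. By contrast, the analytic subtleties, namely working with $\mathrm{Specan}$ in place of $\mathrm{Spec}$, have already been dispatched in Lemma~\ref{subdivmodif}, so no genuinely new analytic difficulty intervenes.
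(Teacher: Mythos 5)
Your argument is correct, and it is essentially the standard proof: the paper itself gives no proof of this lemma, citing \cite[Lemma~1.11]{AbKa}, and your two directions (functoriality of $f\mapsto\varphi_{f}$ forcing $\varphi_{f'}=\varphi_{f}$ for necessity, and the dual reading of $\varphi_{f}(\sigma')\subseteq\tau$ as $f^{*}(\tau^{\vee}\cap M^{W})\subseteq(\sigma')^{\vee}\cap M^{Y}$ to build chart morphisms between the $\mathrm{Specan}$ models for sufficiency) are exactly the argument of Abramovich--Karu and KKMS. The gluing compatibilities you flag are indeed the only remaining bookkeeping, and they go through as you describe.
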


\begin{defn}[Resolution, projective subdivision and alteration]
Let $\Sigma$ be a rational conical polyhedral complex.
We say that $\Sigma$ is {\em non-singular} (resp.\ {\em simplicial}) if each cone $\sigma\in \Sigma$ is generated by some $\Z$-basis of $N_{\sigma}$ (resp.\ $\Q$-basis of $N_{\sigma,\Q}$).
Let $\varphi\colon \Sigma'\to \Sigma$ be a subdivision of $\Sigma$.
Then $\varphi$ is called a {\em resolution} of $\Sigma$ if $\Sigma'$ is non-singular.

The subdivision $\varphi$ is said to be {\em projective} if there exists a continuous function $\psi\colon |\Sigma|\to \R$, which is called a {\em good function associated to $\varphi$}, such that for each cone $\sigma\in \Sigma$, the restriction $\psi|_{\sigma}$ is convex and piecewise linear, takes rational values at all lattice points and the largest pieces in $\sigma$ where $\psi$ is linear are cones in $\Sigma'$.
By {\cite[Chapter~III, Theorem~4.1]{KKMS}}, there always exists a projective resolution of $\Sigma$.

Let $\varphi\colon \Sigma'\to \Sigma$ be a morphism of rational conical polyhedral complexes.
Then $\varphi$ is said to be a {\em lattice alteration} if the continuous map $\varphi$ is a homeomorphism and via this identification $|\Sigma'|=|\Sigma|$, the polyhedral complex $\Sigma'$ can be written by the data $\{(\sigma, N'_{\sigma})\}_{\sigma\in \Sigma}$, where $N'_{\sigma}$ is a sublattice of $N_{\sigma}$ for each cone $\sigma$ in $\Sigma$.
The morphism $\varphi\colon \Sigma'\to \Sigma$ is called an {\em alteration} if $\varphi$ is a composition 
$$
\Sigma'\xrightarrow{\varphi'} \Sigma_{1}\xrightarrow{\varphi_{1}} \Sigma,
$$
where $\varphi'$ is a lattice alteration and $\varphi_{1}$ is a subdivision.
If, moreover, $\varphi_{1}$ is projective, we say that $\varphi$ is a {\em projective alteration}.
\end{defn}

\begin{rem}
Note that given two morphisms $\varphi_{i}\colon \Sigma_{i}\to \Sigma$ ($i=1,2$) such that $\varphi_{1}$ is an alteration, the fiber product $\Sigma_{1}\times_{\Sigma}\Sigma_{2}$ can be written as follows:
The topological space $|\Sigma_{1}\times_{\Sigma}\Sigma_{2}|$ equals $|\Sigma_{2}|$, the cones and the lattices are respectively of the form $\sigma_{1}\times_{\sigma}\sigma_{2}$ and $N_{\sigma_{1}}\times_{N_{\sigma}}N_{\sigma_{2}}$ where $\sigma_{i}\in \Sigma_{i}$, $\sigma\in \Sigma$ with $\varphi_{i}(\sigma_{i})\subseteq \sigma$.
\end{rem}

\begin{lem}  
Let $f\colon X'\to X$ be a toroidal modification between strict toroidal pairs associated to a subdivision $\varphi_{f}\colon \Sigma_{X'}\to \Sigma_{X}$.
Then $f$ is projective if and only if $\varphi_{f}$ is projective.
\end{lem}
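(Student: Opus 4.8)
\emph{Proof proposal.}
The plan is to reduce the statement to the classical toric dictionary between relatively ample torus-invariant $\Q$-Cartier divisors and strictly convex support functions, and then to globalize this over the toroidal pair by gluing over the strata of $(X,\Delta_{X})$. Two elementary facts will be used throughout. First, relative ampleness over $X$ is local on the base and is insensitive to twisting by pullbacks: if $\mathcal{L}$ is relatively ample over $X$ and $\mathcal{M}$ is any invertible sheaf on $X$, then $\mathcal{L}\otimes f^{*}\mathcal{M}$ is again relatively ample over $X$, since for every $x\in X$ the restriction $f^{*}\mathcal{M}|_{f^{-1}(x)}$ is trivial and so $\mathcal{L}$ and $\mathcal{L}\otimes f^{*}\mathcal{M}$ have the same restriction to each fiber. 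Second, for a subdivision of a single cone the associated toric morphism is projective precisely when the subdivision admits a support function that is strictly convex with respect to it; this is the content of {\cite[Chapter~I]{KKMS}}, and the clause ``the largest pieces where $\psi$ is linear are cones in $\Sigma'$'' in the definition of a good function is exactly this strict convexity.

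For the ``if'' direction, I would start from a good function $\psi\colon |\Sigma_{X}|\to \R$ associated to $\varphi_{f}$. On each stratum $Y$ the restriction $\psi|_{\sigma^{Y}}$, read through the identification $\sigma^{Y}\cong \sigma$ with the local toric model and the description of $M^{Y}$ as functions on $\sigma^{Y}$, is a convex piecewise-linear support function, and therefore determines a torus-invariant $\Q$-Cartier divisor $D_{Y}$ on the toroidal modification over $\mathrm{Star}(Y)$ supported on $\Delta_{X'}$. Because the restriction maps $M^{Y}\to M^{Z}$ for $Y\subseteq \overline{Z}$ are induced by restricting functions on cones and $\psi$ is a single globally defined function, the local divisors $D_{Y}$ agree on overlaps and glue to a $\Q$-Cartier divisor $D_{\psi}$ on $X'$. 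Choosing $m$ so that $mD_{\psi}$ is integral and Cartier, I set $\mathcal{L}:=\O_{X'}(mD_{\psi})$ and check relative ampleness locally: over a neighbourhood of a point $x\in Y$ the morphism $f$ is the toric morphism attached to the subdivision of $\sigma^{Y}$, where $D_{\psi}$ is the divisor of the strictly convex support function $\psi|_{\sigma^{Y}}$ and hence relatively ample by the toric criterion. Since relative ampleness is local on $X$, $\mathcal{L}$ is relatively ample and $f$ is projective.

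For the ``only if'' direction, I would begin from a relatively ample $\mathcal{L}$. Working over $\mathrm{Star}(Y)$ for a stratum $Y$, the restriction of $\mathcal{L}$ to the toric local model becomes, after twisting by the pullback of a suitable invertible sheaf from $X$ (which by the first fact alters neither relative ampleness nor the morphism), the sheaf of a torus-invariant $\Q$-Cartier divisor, hence is encoded by a support function $\psi_{Y}$ on $\sigma^{Y}$, well defined up to a linear function. Relative ampleness forces $\psi_{Y}$ to be strictly convex with respect to the subdivision of $\sigma^{Y}$ induced by $\Sigma_{X'}$, i.e.\ linear exactly on the top cones of $\Sigma_{X'}$ lying in $\sigma^{Y}$. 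It then remains to glue the $\psi_{Y}$ into one continuous function $\psi$ on $|\Sigma_{X}|$: compatibility of the $\psi_{Y}$ along common faces follows from that of the restriction maps $M^{Y}\to M^{Z}$ together with the fact that the $D_{Y}$ are restrictions of the single sheaf $\mathcal{L}$, which also pins down the linear ambiguities consistently. The resulting $\psi$ is convex and piecewise linear on each cone, takes rational values at lattice points, and has maximal linearity domains equal to the cones of $\Sigma_{X'}$; thus $\psi$ is a good function and $\varphi_{f}$ is projective.

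The main obstacle I anticipate is the gluing together with the passage between the combinatorial and analytic pictures. Concretely: (i) controlling the ambiguity of each local support function by a linear function so that the $\psi_{Y}$ assemble into a single globally defined $\psi$, which requires exploiting that the local divisors arise from one global invertible sheaf rather than treating each stratum independently; and (ii) justifying, in the complex analytic category, that relative ampleness over $X$ may be tested on the local toric models and is detected fiberwise by the toric ampleness criterion, so that strict convexity of $\psi$ and relative ampleness of $\mathcal{L}$ are genuinely equivalent. Once these two points are settled, the equivalence follows formally from the classical toric correspondence recalled above.
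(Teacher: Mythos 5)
Your proposal is correct and follows essentially the same route as the paper: both reduce to the toric correspondence between Cartier divisors supported on $\Delta_{X'}$ and piecewise linear functions on $|\Sigma_{X'}|$, with relative ampleness over $X$ equivalent to the function being a good function for the subdivision, and both rest on the (asserted) existence of a relatively ample divisor supported on the boundary. The only real difference is presentational: the paper defines $\psi_{D}$ globally in one stroke by sending the primitive generator $u_{\rho_{i}}$ of each ray to $-a_{i}$, which makes the stratum-by-stratum gluing you flag as your main obstacle automatic.
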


\begin{proof}
Let $\Delta_{X'}=\sum_{i=1}^{n}E_{i}$ be the irreducible decomposition of the boundary divisor of $X'$.
Then each prime divisor $E_{i}$ corresponds to a $1$-dimensional cone $\rho_{i}\in \Sigma_{X'}$.
Let $u_{\rho_{i}}\in N_{\rho_{i}}\cap \rho_{i}$ denote the primitive vector for each $\rho_{i}$.
For a Cartier divisor $D=\sum_{i=1}^{n}a_{i}E_{i}$ on $X'$ whose support is contained in $\Delta_{X'}$, we can attach the piecewise linear function $\psi_{D}\colon |\Sigma_{X'}|\to \R$ which maps $u_{\rho_{i}}$ to $-a_{i}$.
Similar to the toric case {\cite[Theorem~7.2.11]{CLS}}, $D$ is relatively ample over $X$ if and only if $\psi_{D}$ is a good function associated to the subdivision.
If $f$ is projective, then there exists a relatively ample Cartier divisor $D$ on $X'$ whose supports are contained in the boundary divisor $\Delta_{X'}$. 
Hence the claim holds.
\end{proof}

\begin{defn}[Weakly semistable and semistable]
Let $\varphi\colon \Sigma'\to \Sigma$ be a morphism of rational conical polyhedral complexes.
Then $\varphi$ is called {\em equidimensional} if for each cone $\sigma\in \Sigma'$, the image $\varphi(\sigma)$ is also a cone in $\Sigma$.
The morphism $\varphi$ is said to be {\em weakly semistable} (resp.\ {\em semistable}) if the following conditions (i), (ii) and (iii) (resp.\ (i), (ii), (iii) and (iv)) hold:

\smallskip

\noindent
(i) $\varphi$ is equidimensional.

\smallskip

\noindent
(ii) For each cone $\sigma\in \Sigma'$, the homomorphism $\varphi_{\sigma}\colon N_{\sigma}\to N_{\varphi(\sigma)}$ is surjective.

\smallskip

\noindent
(iii) $\Sigma$ is non-singular.

\smallskip

\noindent
(iv) $\Sigma'$ is non-singular.
\end{defn}

\begin{rem}
In \cite{AbKa}, the definition of (weak) semistability requires the condition that $\varphi^{-1}(0)=0$.
The above definition follows \cite{ALT}.
\end{rem}

\begin{lem} \label{toroidalmorproperty}
Let $f\colon (X, \Delta_{X})\to (B, \Delta_{B})$ be a toroidal morphism between strict toroidal pairs and $\varphi_{f}\colon \Sigma_{X}\to \Sigma_{B}$ the associated morphism of polyhedral complexes.
Then the following hold:

\smallskip

\noindent
(1) $f(\Delta_{X})\subseteq \Delta_{B}$ if and only if $\varphi_{f}^{-1}(0)=0$.

\smallskip

\noindent
(2) $f$ is equidimensional if and only if $\varphi_{f}$ is equidimensional.

\smallskip

\noindent
(3) Assume that $f$ is equidimensional and $B$ is non-singular.
Then any fiber of $f$ is reduced if and only if $\varphi_{f}$ is weakly semistable.

\smallskip

\noindent
(4) $f$ is semistable with respect to $\Delta_{X}$ and $\Delta_{B}$ if and only if $\varphi_{f}$ is semistable.
\end{lem}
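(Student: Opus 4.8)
The plan is to prove all four equivalences by reducing to the corresponding statements for the local toric models, since the associated map $\varphi_{f}$ is assembled stratum by stratum from the cone maps $\varphi^{Z}\colon \sigma^{Z}\to \sigma^{W}$, which under the identifications $\sigma^{Y}\cong \sigma$, $N^{Y}\cong N_{\sigma}\cap \mathrm{Span}(\sigma)$ and $M^{Y}\cong M_{\sigma}/\sigma^{\bot}\cap M_{\sigma}$ are precisely the cone maps of the compatible toric morphisms $g\colon X_{\sigma}\to X_{\tau}$. Each property under consideration — the image of $\Delta_{X}$, the dimension of the fibers, the reducedness of the fibers, and the local monomial form — is detected on the completed local rings $\hat{\O}_{X,x}\cong \hat{\O}_{X_{\sigma},x_{\sigma}}$, hence can be tested on the toric local model (for reducedness one uses that analytic local rings are excellent, so that reducedness of a fiber transfers both ways between $\O$ and $\hat{\O}$). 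Thus for each point $x\in X$ with local models $x_{\sigma}\in X_{\sigma}$, $x_{\tau}\in X_{\tau}$ and a compatible toric $g$, it suffices to translate the property of $g$ into a property of $\varphi_{\sigma}\colon N_{\sigma}\to N_{\tau}$ (satisfying $\varphi_{\sigma,\R}(\sigma)\subseteq \tau$) and then glue over all strata.

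For (1), I would use the orbit--cone correspondence on the toric model: $g$ sends a point of $\Delta_{X}=X_{\sigma}\setminus T_{\sigma}$ into the open orbit $T_{\tau}=X_{\tau}\setminus\Delta_{B}$ exactly when some face $\gamma\preceq\sigma$ with $\gamma\neq 0$ satisfies $\varphi_{\sigma}(\gamma)=0$. Since $\tau$ is strongly convex, a nonzero vector of $\sigma$ lies in $\ker\varphi_{\sigma,\R}$ if and only if some nonzero face is killed, so locally $f(\Delta_{X})\subseteq\Delta_{B}$ fails iff $\varphi_{\sigma}^{-1}(0)\neq 0$; running over all strata yields $f(\Delta_{X})\subseteq\Delta_{B}\iff\varphi_{f}^{-1}(0)=0$. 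For (2), I would invoke the standard criterion that a dominant toric morphism is equidimensional iff the image $\varphi_{\sigma,\R}(\sigma)$ of each cone is itself a cone of the target (equivalently a face of $\tau$), which is exactly the combinatorial definition of equidimensionality of $\varphi_{f}$; since equidimensionality of $f$ means each local toric model is equidimensional, the two conditions match stratum by stratum.

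For (3), note first that $B$ nonsingular gives condition (iii) and, by (2), equidimensionality of $f$ gives condition (i), so the remaining content is that all fibers of $f$ are reduced iff the lattice maps $\varphi_{\sigma}\colon N_{\sigma}\to N_{\varphi(\sigma)}$ are surjective, namely condition (ii). On the toric model with $\tau$ nonsingular, the fiber of $g$ over the distinguished point of $X_{\varphi_{\sigma}(\sigma)}$ is the toric scheme attached to the quotient data, and its scheme structure is reduced precisely when $\varphi_{\sigma}$ surjects onto $N_{\varphi_{\sigma}(\sigma)}$, the nonreduced structure otherwise being governed by the finite cokernel whose order is the fiber multiplicity. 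I would carry out (or cite from \cite{AbKa}) this computation, taking care that the source $X_{\sigma}$ may be singular here since condition (iv) is not assumed, and then conclude by gluing. Finally, for (4) I would combine the previous parts: conditions (iii) and (iv) hold iff $\Sigma_{B}$ and $\Sigma_{X}$ are nonsingular, equivalently $B$ and $X$ are smooth with simple normal crossing boundaries so that each local model is an affine space with the coordinate-hyperplane boundary; granting this, an equidimensional toric morphism with surjective lattice maps becomes, in coordinates adapted to the $\Z$-basis of $\sigma$, exactly a monomial map $t_{i}=\prod_{j=l_{i-1}+1}^{l_{i}}x_{j}$, and conversely such a map satisfies (i)--(iv). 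Matching the boundary data $\Delta_{X}=\{x_{1}\cdots x_{l}=0\}$, $\Delta_{B}=\{t_{1}\cdots t_{k}=0\}$ and the normalization $l_{i+1}=l_{i}+1$ for $i\ge k$ to the partition of the basis vectors of $\sigma$ according to whether they map into the boundary faces of $\tau$ is then bookkeeping.

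I expect the main obstacle to be part (3): the toric computation identifying reducedness of the (a priori nonreduced, since the source need not be smooth) fibers of an equidimensional toric morphism with surjectivity of $\varphi_{\sigma}$ on lattices, together with the verification that reducedness may legitimately be tested on completed local rings so that the reduction to the toric model is valid. The other parts are either standard toric dictionary entries or, in the case of (4), careful bookkeeping built on (1)--(3).
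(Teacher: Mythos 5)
Your proposal is correct and follows essentially the same route as the paper: the paper's proof is a brief sketch that reduces everything to the local toric models, deriving (1) from the fact that $f(Z)\subseteq W$ iff $\sigma^{W}$ is the minimal cone containing $\varphi_{f}(\sigma^{Z})$, citing \cite[Lemmas~4.1 and 5.2]{AbKa} for (2) and (3), and settling (4) by the same local-equation bookkeeping you describe. Your write-up merely fills in the details of that identical strategy.
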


\begin{proof}
The claim (1) follows from the fact that strata $Z\subseteq X$ and $W\subseteq B$ satisfy $f(Z)\subseteq W$ if and only if $\sigma^{W}$ is the minimal cone in $\Sigma_{B}$ containing $\varphi_{f}(\sigma^{Z})$.
The claim (2) is due to {\cite[Lemma~4.1]{AbKa}}.
The claim (3) is due to {\cite[Lemma~5.2]{AbKa}}.
The claim (4) follows from calculations of local equations.
\end{proof}

\begin{defn}[Toroidal action and Pre-toroidal action \cite{AbdJ}]
Let $(X, \Delta)$ be a toroidal pair.
Let $\rho\colon G \curvearrowright (X,\Delta)$ be a faithful finite group action on the pair $(X, \Delta)$, that is, a finite group $G$ acts on $X$ faithfully such that $\Delta$ is $G$-invariant.

\smallskip

\noindent
(1) $\rho$ is {\em toroidal at $x\in X$} if there exists a local model $x_{\sigma}\in X_{\sigma}$ at $x$ and a group homomorphism $\varphi\colon G_{x}:=\mathrm{Stab}_{G}(x)\to \mathrm{Stab}_{T_{\sigma}}(x_{\sigma})$ from the stabilizer subgroup of $x$ to that of $x_{\sigma}$ such that the formal isomorphism $(x\in X)\cong (x_{\sigma}\in X_{\sigma})$ is $G_{x}$-equivariant.
We say that $\rho$ is {\em toroidal} if it is toroidal at any $x\in X$.
In this case, its quotient $(X/G, \Delta/G)$ is also a toroidal pair since the quotient $X_{\sigma}/G_{x}$ is toric.

\smallskip

\noindent
(2) $\rho$ is {\em pre-toroidal at $x\in X$} if there exist a germ of a toric variety $x_{\sigma}\in X_{\sigma}$, a group homomorphism $\varphi\colon G_{x}\to \mathrm{Stab}_{T_{\sigma}}(x_{\sigma})$, a non-trivial character $\psi$ of $G_{x}$ and a $G_{x}$-equivariant formal isomorphism 
$$
(x\in X, \Delta)\cong ((x_{\sigma}, 0)\in X_{\sigma}\times \C, (X_{\sigma}\setminus T_{\sigma})\times \C),
$$
where the action of $G_{x}$ on $X_{\sigma}\times \C$ is defined by the diagonal action $(\varphi, \psi)$.
Note that the character $\psi$ depends only on the germ $x\in X$.
We say that $\rho$ is {\em pre-toroidal} if it is toroidal or pre-toroidal for any $x\in X$.

\smallskip

\noindent
(3) Assume that $\rho$ is pre-toroidal.
Then the {\em torific ideal} $\mathcal{I}_{\rho}$ of $\rho$ is defined as follows:
For any $x\in X$, let $\psi_{x}$ define the character $\psi$ of $G_{x}$ as in (2) when $\rho$ is pre-toroidal at $x$ or put $\psi_{x}:=1$ when $\rho$ is toroidal at $x$.
Since $G_{x}$ is abelian, the stalk $\O_{X, x}$ has the eigenspace decomposition
$$
\O_{X, x}=\bigoplus_{\gamma\in \Hom(G_{x}, \C^{*})}\O_{X, x}^{(\gamma)}.
$$
The ideal $\mathcal{I}_{\rho}$ is defined as the ideal generated by the subspaces $\O_{X, x}^{(\psi_{x})}$ for all $x\in X$.
Then the zero-set of the torific ideal coincides with the locus of points at which $\rho$ is pre-toroidal.
\end{defn}

\begin{lem}[{\cite[Theorem~1.7]{AbdJ}}] \label{torify}
Let $\rho\colon G \curvearrowright (X,\Delta)$ be a pre-toroidal action.
Then the normalized blowing-up $\pi \colon X'\to X$ along the torific ideal of $\rho$ defines the toroidal pair $(X', \Delta')$, where $\Delta'$ is the inverse image of the union of $\Delta$ and the locus of points at which $\rho$ is pre-toroidal.
Moreover, the action $\rho$ lifts to a toroidal action $\rho'\colon G \curvearrowright (X',\Delta')$.
\end{lem}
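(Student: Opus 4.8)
The statement is formal-local in nature, so the plan is first to establish the global structural facts---coherence and $G$-invariance of the torific ideal, whence $G$-equivariance of the whole construction---and then to verify the toroidal conclusions on the local models furnished by the definition of a pre-toroidal action. I would begin by checking that $\mathcal{I}_{\rho}$ is a coherent $G$-invariant ideal sheaf. The eigenspace decomposition $\O_{X,x}=\bigoplus_{\gamma}\O_{X,x}^{(\gamma)}$ is canonical once the abelian stabilizer $G_{x}$ and its action are fixed, and along an orbit the stabilizers are conjugate while the distinguished character transports compatibly (for $h\in gG_{x}g^{-1}$ one has $\psi_{gx}(h)=\psi_{x}(g^{-1}hg)$); hence the local generators $\O_{X,x}^{(\psi_{x})}$ are permuted by $G$ and $\mathcal{I}_{\rho}$ is $G$-invariant. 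Coherence is automatic since each $\O_{X,x}$ is Noetherian. Consequently the blowing-up along $\mathcal{I}_{\rho}$ and its normalization are canonical $G$-equivariant operations, which already produces the lift $\rho'\colon G\curvearrowright X'$; it remains only to identify the toroidal structure on $X'$ and to check that $\rho'$ is toroidal.

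Because the formation of $\mathcal{I}_{\rho}$, the blowing-up and the normalization all commute with passage to formal completions (the analytic local rings being excellent), it suffices to perform the computation on the local models. Where $\rho$ is toroidal at $x$ one has $\psi_{x}=1$, so $1\in \O_{X,x}^{(\psi_{x})}$ and $\mathcal{I}_{\rho}$ is locally the unit ideal; thus $\pi$ is an isomorphism over the toroidal locus and $\Delta'$ agrees with $\Delta$ there, in accordance with the description of $\Delta'$. So the only points requiring work are the pre-toroidal ones.

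The heart of the argument is therefore the pre-toroidal local model $(x_{\sigma},0)\in X_{\sigma}\times \C$ carrying the diagonal $(\varphi,\psi)$-action, with $s$ the coordinate on the $\C$-factor. Here a monomial $\chi^{m}s^{k}$ has $G_{x}$-weight $(m\circ\varphi)\cdot\psi^{k}$, so the $\psi$-eigenspace, and hence $\mathcal{I}_{\rho}$, is the torus-invariant (monomial) ideal generated by those $\chi^{m}s^{k}$ with $(m\circ\varphi)\cdot\psi^{k-1}=1$; in particular $s\in\mathcal{I}_{\rho}$ once $s$ is normalized to have weight $\psi$. Being torus-invariant on the toric variety $X_{\sigma}\times\C$, its normalized blowing-up is a toric morphism, so the resulting pair is toroidal, and its boundary is the inverse image of $\Delta$ together with the exceptional divisor lying over the zero-locus of $\mathcal{I}_{\rho}$, that is, the pre-toroidal locus---exactly the asserted $\Delta'$. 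On this toric blow-up the image of $G_{x}$ under $(\varphi,\psi)$ is a subtorus, acting through characters of the refined torus; since $s$, carrying the weight $\psi$, has become a monomial defining a boundary divisor, the $(\varphi,\psi)$-action is now toroidal at every point over $x$, furnishing a $G_{x'}$-equivariant toric local model.

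The main obstacle I expect is precisely this last step on the pre-toroidal model: one must verify by a direct computation on the exceptional fan that the a priori non-toric direction $s$ is genuinely absorbed into the toric boundary after a single normalized blow-up, so that the blown-up points become \emph{toroidal} rather than merely pre-toroidal, and that the lifted stabilizer $G_{x'}$ indeed lands in the stabilizer of the new distinguished point inside the refined torus. A secondary technical point is checking that the formal-local identifications are compatible across each orbit and glue to the global toroidal structure on $X'$, which is exactly where the canonicity and $G$-invariance established in the first step are used.
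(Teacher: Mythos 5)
The paper offers no proof of this lemma: it is quoted directly from Abramovich--de Jong \cite[Theorem~1.7]{AbdJ}, so the only meaningful comparison is with the proof given there. Your outline does track the strategy of that proof --- establish $G$-invariance and coherence of the torific ideal, observe that the blow-up is an isomorphism over the toroidal locus (where $\psi_x=1$ puts $1$ in the distinguished eigenspace), and reduce everything to a computation on the pre-toroidal local model $X_{\sigma}\times\C$, where the ideal becomes monomial for the torus $T_{\sigma}\times\C^{*}$ --- and your bookkeeping with conjugate stabilizers and transported characters is correct.

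However, the decisive step is asserted rather than proved, and you flag this yourself. The inference ``the ideal is torus-invariant on $X_{\sigma}\times\C$, hence its normalized blowing-up is toric, hence the resulting pair is toroidal'' does not close the argument, because toroidality of $(X',\Delta')$ requires a local model whose boundary is exactly $\Delta'$, not the full toric boundary of the blown-up toric model. The divisor $X_{\sigma}\times\{0\}=\{s=0\}$ is a toric boundary divisor of $X_{\sigma}\times\C$ for the torus $T_{\sigma}\times\C^{*}$, but it is not contained in $\Delta=(X_{\sigma}\setminus T_{\sigma})\times\C$, and its proper transform need not lie in $\Delta'=\pi^{-1}(\Delta)\cup\pi^{-1}(Z(\mathcal{I}_{\rho}))$, since $Z(\mathcal{I}_{\rho})$ is in general a proper subset of $\{s=0\}$. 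So at points of $X'$ on that proper transform one must still exhibit a model with boundary precisely $\Delta'$ (typically by checking that the blown-up space is smooth with normal crossing $\Delta'$ there), and one must verify that the lifted stabilizers act through the torus of that model --- i.e.\ that pre-toroidal points genuinely become toroidal after a \emph{single} normalized blow-up, rather than pre-toroidal again. This chart-by-chart verification is the entire content of the cited proof; without it your text is a correct reduction to \cite[Theorem~1.7]{AbdJ} rather than a proof of it. Two smaller points: coherence of $\mathcal{I}_{\rho}$ is not ``automatic from Noetherianity of stalks'' --- one needs the eigenspace generators at $x$ to generate the ideal on a whole $G_{x}$-invariant neighbourhood; and the stalk of $\mathcal{I}_{\rho}$ at $x$ receives contributions from the eigenspaces at nearby points $y$, not only from $\O_{X,x}^{(\psi_{x})}$, which must be checked to contribute nothing new to the monomial description on the local model.
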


\subsection{Abhyankar's lemma}
In this subsection, we will prove an analytic version of Abhyankar's lemma which is used later and discuss finite coverings of analytic varieties.

\begin{lem}[Uniqueness of finite coverings] \label{uniqfin}
Let $X$ be a normal analytic variety of dimension $n$ and $Y\subseteq X$ a non-empty Zariski open subspace.
Let $\pi_{i}\colon X_{i}\to X$, $i=1,2$ be finite surjective morphisms from normal analytic varieties.
Let $\varphi\colon \pi_{1}^{-1}(Y)\cong \pi_{2}^{-1}(Y)$ be an isomorphism over $Y$.
Then it extends uniquely an isomorphism $X_{1}\cong X_{2}$ over $X$.
\end{lem}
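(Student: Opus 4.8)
The plan is to realize the extension as the correspondence cut out by the closure of the graph of $\varphi$, and then to reduce the whole statement to the standard fact that a finite birational morphism between normal analytic varieties is an isomorphism. Uniqueness is immediate and I would dispose of it first: since $X$ is irreducible and $Y$ is a nonempty Zariski open subset, $Y$ is dense in $X$, and as $\pi_1$ is finite and surjective, $\pi_1^{-1}(Y)$ is a nonempty, hence dense, Zariski open subset of the irreducible variety $X_1$. Two morphisms $X_1\to X_2$ over $X$ that both restrict to $\varphi$ on $\pi_1^{-1}(Y)$ then agree on a dense subset; because $X_2$ is Hausdorff and the morphisms are continuous, they coincide. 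So it remains to produce one extension.

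For existence I would first form the analytic fibre product $X_1\times_X X_2$, which is finite over $X$ and over each $X_i$, and consider the graph
$$
\Gamma_\varphi=\{(a,\varphi(a))\ |\ a\in\pi_1^{-1}(Y)\}\subseteq W_0:=\pi_1^{-1}(Y)\times_Y\pi_2^{-1}(Y).
$$
Because $X_2$ is Hausdorff, $\Gamma_\varphi$ is closed in the open set $W_0$ of $X_1\times_X X_2$, and the first projection identifies it with $\pi_1^{-1}(Y)$; in particular it is irreducible of dimension $n$. Since $X\setminus Y$ is a nowhere dense analytic subset, it has dimension at most $n-1$, and hence so does the complement of $W_0$ in the space $X_1\times_X X_2$, which is finite over $X$. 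The Remmert–Stein extension theorem then shows that the closure $Z:=\overline{\Gamma_\varphi}$ is an analytic subvariety; it is irreducible of dimension $n$, equals the unique irreducible component passing through $\Gamma_\varphi$, and satisfies $Z\cap W_0=\Gamma_\varphi$ (using that $\Gamma_\varphi$ is closed in $W_0$).

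Next I would pass to the normalization $\nu\colon\widetilde Z\to Z$, which is a finite morphism onto the reduced irreducible variety $Z$ by the analytic normalization theorem, so $\widetilde Z$ is a normal variety of dimension $n$. Composing $\nu$ with the two projections yields finite morphisms $p_i\colon\widetilde Z\to X_i$ over $X$. Over $\pi_1^{-1}(Y)$ one has $Z=\Gamma_\varphi\cong\pi_1^{-1}(Y)$, which is smooth and hence normal, so $\nu$ is an isomorphism there and $p_1$ restricts to an isomorphism over $\pi_1^{-1}(Y)$; symmetrically $p_2$ restricts to an isomorphism over $\pi_2^{-1}(Y)$. Thus each $p_i$ is a finite birational morphism between the normal varieties $\widetilde Z$ and $X_i$. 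The structural input I would invoke now is that such a morphism is automatically an isomorphism: $(p_i)_*\O_{\widetilde Z}$ is a coherent $\O_{X_i}$-algebra that is finite over $\O_{X_i}$ and lies in the shared function field, so integral closedness of the normal $\O_{X_i}$ forces $(p_i)_*\O_{\widetilde Z}=\O_{X_i}$. Hence $p_1$ and $p_2$ are isomorphisms, and $\Phi:=p_2\circ p_1^{-1}\colon X_1\to X_2$ is an isomorphism over $X$ restricting to $\varphi$ on $\pi_1^{-1}(Y)$.

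The delicate points are analytic rather than formal. The first is verifying that the closure of the graph is genuinely an analytic set, which is precisely where the hypothesis that $X\setminus Y$ has codimension at least one enters, through Remmert–Stein; the second is that the normalization of a reduced complex analytic variety is a finite morphism, so that the projections $p_i$ are finite. Granting these, the heart of the matter is the final step, where normality of both $X_1$ and $X_2$ is essential: it is exactly what upgrades the birational correspondence into the desired isomorphism. I expect this normality input, together with the analyticity of the graph closure, to be the main obstacle to make fully rigorous.
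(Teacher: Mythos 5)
Your proof is correct and follows essentially the same route as the paper's: close up the graph of $\varphi$ inside $X_1\times_X X_2$ via Remmert--Stein, normalize, and use normality of the $X_i$ to identify both projections as isomorphisms (the paper packages this last step as uniqueness of the Stein factorization, which rests on the same fact that a finite birational morphism onto a normal variety is an isomorphism). One small slip: $\pi_1^{-1}(Y)$ need not be smooth, but it is an open subset of the normal variety $X_1$ and hence normal, which is all your argument actually uses.
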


\begin{proof}
Let $Y_{i}:=\pi_{i}^{-1}(Y)$.
We first show that the map $\varphi\colon X_{1}\dasharrow X_{2}$ is bimeromorphic.
Let $\Gamma\subseteq Y_{1}\times Y_{2}$ be the graph of $\varphi$ and show that the closure of $\Gamma$ in $X_{1}\times X_{2}$ is an analytic subset.
Note that $\Gamma$ is an irreducible component of $Y_{1}\times_{Y}Y_{2}$ of dimension $n$ and $Y_{1}\times_{Y}Y_{2}\subseteq X_{1}\times_{X}X_{2}$ is Zariski open whose complement has dimension less than $n$.
Thus applying Remmert--Stein's theorem (cf.\ {\cite[Theorem~4.6]{Shi}}), the closure $\overline{\Gamma}$ of $\Gamma$ in $X_{1}\times_{X}X_{2}$ is an analytic subset. 
Let $X_{3}$ denote the normalization of $\overline{\Gamma}$.
Then the decompositions $X_{3}\to X_{i}\to X$, $i=1,2$ are both the Stein factorizations.
Hence the uniqueness of the Stein factorization says that $X_{1}$ is isomorphic to $X_{2}$ over $X$ which is an extension of $\varphi$.
\end{proof}



The following is an analytic version of Abhyankar's lemma (cf.\ \cite{GrMu}).

\begin{prop}[Analytic version of Abhyankar's lemma] \label{analAbh}
Let $X$ be a smooth analytic variety of dimension $n$, $\Delta$ a reduced normal crossing divisor on $X$ and $Y:=X\setminus \Delta$.
Let $\pi\colon Y'\to Y$ be a finite \'{e}tale covering.
Then there exist a toroidal pair $(X', \Delta')$ with $X'\setminus \Delta'\cong Y'$ and a finite toroidal morphism $(X', \Delta')\to (X, \Delta)$ compatible with $\pi$ such that
the following hold:

\smallskip

\noindent
(i) If $\Delta$ is a smooth divisor, then $X'$ and $\Delta'$ are smooth.

\smallskip

\noindent
(ii) If $\pi$ is Galois with the Galois group $G$, then so is $X'\to X$ and the action of $G$ on $X'$ is toroidal. 
\end{prop}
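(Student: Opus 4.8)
The plan is to reduce the whole statement to a local toric computation on polydiscs and then to patch the resulting local coverings together by invoking the uniqueness of finite coverings in Lemma~\ref{uniqfin}. For the local analysis, around each point $x\in X$ I would choose local analytic coordinates $(x_{1},\ldots,x_{n})$ on a polydisc $P$ so that $\Delta\cap P=\{x_{1}\cdots x_{k}=0\}$. Then $Y\cap P$ is biholomorphic to $(D^{*})^{k}\times D^{n-k}$ for a disc $D$, so its fundamental group is the abelian group $\Z^{k}$ generated by the loops around the $k$ branches. Since $\pi$ is finite and \'{e}tale, each connected component of $\pi^{-1}(Y\cap P)\to Y\cap P$ is a finite abelian covering, and is therefore classified by a finite-index sublattice $N'\subseteq N:=\Z^{k}$, namely the image of the fundamental group of that component (all point stabilizers agree because $\Z^{k}$ is abelian).

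For the local models, for each such sublattice $N'$ I would use the Smith normal form to pick a basis of $N$ in which $N'=d_{1}\Z\oplus\cdots\oplus d_{k}\Z$, and define the Kummer covering $P'\to P$ by $(w_{1},\ldots,w_{k},z)\mapsto(w_{1}^{d_{1}},\ldots,w_{k}^{d_{k}},z)$. This is a finite toric morphism, \'{e}tale over the torus and smooth, and the preimage of $\Delta$ is the normal crossing divisor $\{w_{1}\cdots w_{k}=0\}$; over the punctured polydisc it reproduces the chosen component of $\pi$, since the normalization of $\C^{k}$ in an isogeny $T_{N'}\to T_{N}$ of tori is exactly the corresponding affine toric variety. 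Taking the disjoint union over all components, I obtain a finite toroidal covering $X'_{P}\to P$ whose restriction away from the boundary is $\pi^{-1}(Y\cap P)$.

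For the gluing and the two refinements, on an overlap $P\cap Q$ the normal covers $X'_{P}$ and $X'_{Q}$ both restrict to $\pi^{-1}(Y\cap P\cap Q)$ over the dense open set $Y\cap P\cap Q$; by Lemma~\ref{uniqfin} this identification extends uniquely to an isomorphism over $P\cap Q$, and uniqueness forces the cocycle condition. Hence the $X'_{P}$ glue to a global normal analytic variety $X'$ with a finite morphism to $X$, \'{e}tale over $Y$, satisfying $X'\setminus\Delta'\cong Y'$ with $\Delta'$ the preimage of $\Delta$; because every local model is a toric Kummer morphism, $(X',\Delta')$ is a toroidal pair and $X'\to X$ is a finite toroidal morphism compatible with $\pi$. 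For (i), if $\Delta$ is smooth then $k\le 1$ at every point, so each local model is $w\mapsto w^{d}$ times smooth factors, whence $X'$ and $\Delta'$ are smooth. For (ii), if $\pi$ is Galois with group $G$, then applying Lemma~\ref{uniqfin} to each $g\in G$ extends the $G$-action on $Y'$ uniquely to a faithful $G$-action on $X'$ over $X$, and uniqueness shows that these extensions compose correctly, so $X'\to X$ is Galois; locally the deck transformations act on the $w_{i}$ through roots of unity, hence through the torus $T_{\sigma}$, which furnishes the homomorphism $G_{x}\to\mathrm{Stab}_{T_{\sigma}}(x_{\sigma})$ required for the action to be toroidal.

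I expect the main obstacle to be precisely this gluing step: verifying that the purely local toric models patch into a single global toroidal covering in the analytic category, where one cannot simply take a normalization in a function field. Lemma~\ref{uniqfin} is the essential device, both to produce the patching isomorphisms over the overlaps and---through its uniqueness clause---to guarantee the cocycle condition and, in the Galois case, that the extended deck transformations form a genuine group action.
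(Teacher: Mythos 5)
Your overall strategy is the same as the paper's: reduce to polydiscs $P$ with $\Delta\cap P=\{x_{1}\cdots x_{k}=0\}$, classify the components of $\pi^{-1}(Y\cap P)$ by finite-index sublattices $N'$ of $\pi_{1}((\mathbb{D}^{*})^{k}\times\mathbb{D}^{n-k})\cong\Z^{k}$, build a local toric extension, and glue by the uniqueness statement of Lemma~\ref{uniqfin}; the treatment of (i), (ii) and of the cocycle condition is also as in the paper. However, there is a genuine error in your construction of the local models. The Smith normal form gives a basis $f_{1},\ldots,f_{k}$ of $N$ with $N'=\bigoplus_{i}d_{i}\Z f_{i}$, but this change of basis is a monomial transformation of the torus $(\C^{*})^{k}$ that does \emph{not} extend to the polydisc: the cone $\tau=\mathrm{Cone}(e_{1},\ldots,e_{k})$ attached to the coordinate hyperplanes is not generated by the $f_{i}$. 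Consequently the map $(w_{1},\ldots,w_{k},z)\mapsto(w_{1}^{d_{1}},\ldots,w_{k}^{d_{k}},z)$ either realizes the wrong sublattice (namely $\bigoplus_{i}d_{i}\Z e_{i}$, if taken in the original coordinates) or is not a map to $P$ at all (if taken in the transformed coordinates). The correct local model is the normalization of $P$ in the given component, i.e.\ the toric variety $X_{\tau,N'}$ with $\tau$ the \emph{original} orthant viewed in the lattice $N'$; this is simplicial, hence toroidal, but in general singular. For instance, for $k=2$ and $N'=\{(a,b):a+b\equiv 0\ (\mathrm{mod}\ 2)\}$ one gets $\mathrm{Spec}\,\C[x,y,w]/(w^{2}-xy)$, an $A_{1}$ singularity, not a smooth Kummer cover, and its boundary is not normal crossing. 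Your argument would prove that $X'$ is always smooth with normal crossing $\Delta'$, which is false and would make clause (i) vacuous; the proposition only asserts that $(X',\Delta')$ is toroidal, with smoothness reserved for the case $k\le 1$.

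The repair is exactly what the paper does: keep the cone fixed, take the intermediate lattice $N_{\sigma}\subseteq N'\subseteq N$ determined by $\pi_{1}(V')\subseteq\pi_{1}(V)$, and use the finite toric morphism $X_{\tau,N'}\to X_{\tau,N}$ as the local extension, noting only that $\tau$ is simplicial in $N'_{\R}$ and smooth precisely when $k\le 1$. With that correction the rest of your argument (gluing via Lemma~\ref{uniqfin}, the extension and toroidality of the $G$-action through the stabilizers $G_{x}\cong N/N'$ acting via the torus $T_{N'}$) goes through as in the paper.
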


\begin{proof}
For any point $p\in X$, we take an open neighbourhood $U\subseteq X$ such that $U\cong \mathbb{D}^{n}$, where $\mathbb{D}$ denotes the unit open disc in $\mathbb{C}$.
We may assume that $p$ is the origin and $\Delta\cap U$ is defined by $x_{1}x_{2}\cdots x_{k}=0$ for some $0\le k\le n$.
Let 
$$
V:=U\setminus \Delta\cong \mathbb{D}^{*k}\times \mathbb{D}^{n-k},\quad \mathbb{D}^{*}:=\mathbb{D}\setminus \{0\}
$$
and consider any connected component $V'$ of $\pi^{-1}(V)$.
The restriction map $V'\to V$ of $\pi$ is also finite \'{e}tale.
Let $\widetilde{V}$ denote the universal cover of $V$.
Then we have the decomposition $\widetilde{V}\to V'\to V$.
Note that the exponential map $\bm{e}\colon \mathbb{H}\to \mathbb{D}^{*}; z\mapsto e^{2\pi\sqrt{-1}z}$ gives the universal covering of $\mathbb{D}^{*}$ with the covering transformation group $\Z$ which naturally extends to the universal covering $\bm{e}\colon \C\to \C^{*}$.
Thus the covering $\widetilde{V}\to V$ is isomorphic to 
$$
\bm{e}^{k}\times \mathrm{id}\colon \mathbb{H}^{k}\times \mathbb{D}^{n-k}\to \mathbb{D}^{*k}\times \mathbb{D}^{n-k}
$$
with the covering transformation group $\pi_{1}(V)\cong \Z^{k}$ which naturally extends to 
$$
\bm{e}^{k}\times \mathrm{id}\colon \C^{k}\times \C^{n-k}\to \C^{*k}\times \C^{n-k}.
$$
Since $\pi_{1}(V')$ is a subgroup of the abelian group $\pi_{1}(V)\cong \Z^{k}$, the covering $V'\to V$ is Galois with the finite Galois group $\pi_{1}(V)/\pi_{1}(V')$.
Note that $\pi_{1}(V')$ is (abstractly) isomorphic to $\Z^{n}$ and so $V'\cong \widetilde{V}/\pi_{1}(V')$ is isomorphic to $\mathbb{D}^{*k}\times \mathbb{D}^{n-k}$.
Now we regard $V$ as an open subspace of the toric variety $X_{\sigma, N}=\C^{*k}\times \C^{n-k}$ with the same fundamental group,
where $N:=\Z^{n}$ is the lattice and the cone $\sigma\subseteq N_{\R}=\R^{n}$ is generated by $e_{k+1},\ldots,e_{n}$, where $\{e_{i}\}_{i=1}^{n}$ is the standard basis of $N$.
Let $N_{\sigma}$ be the sublattice of $N$ spanned by $\sigma$, which has the basis $e_{k+1},\ldots, e_{n}$.
By {\cite[Proposition~12.1.9]{CLS}}, we have a natural isomorphism $\pi_{1}(X_{\sigma, N})\cong N/N_{\sigma}$ induced by the natural identification $\pi_{1}(T_{N})\cong N$. 
The subgroup $\pi_{1}(V')\subseteq \pi_{1}(V)=\pi_{1}(X_{\sigma, N})$ corresponds to an intermediate lattice $N_{\sigma}\subseteq N'\subseteq N$ with $\pi_{1}(V')\cong N'/N_{\sigma}$.
The inclusion $N'\subseteq N$ induces the toric morphism $X_{\sigma, N'}\to X_{\sigma, N}$ which is finite \'{e}tale and corresponds to the inclusion $\pi_{1}(X_{\sigma, N'})=N'/N_{\sigma}\subseteq \pi_{1}(X_{\sigma, N})=N/N_{\sigma}$.
Thus $\widetilde{V}\to V'\to V$ extends to the sequence of covering maps
$$
\bm{e}^{k}\times \mathrm{id} \colon \C^{k}\times \C^{n-k}\to X_{\sigma, N'} \to X_{\sigma, N}.
$$

Let $\tau$ denote the cone in $N_{\R}$ generated by $e_{1},\ldots, e_{k}$ and $\sigma$.
Then the inclusion $V=U\setminus \Delta\subseteq X_{\sigma, N}$ extends to $U\subseteq X_{\tau, N}=\C^{k}\times \C^{n-k}$.
Moreover, the finite \'{e}tale morphism $X_{\sigma, N'}\to X_{\sigma, N}$ naturally extends to a finite toric morphism $\pi'\colon X_{\tau, N'}\to X_{\tau, N}$.
Note that $\tau$ is simplicial in $N'_{\R}$, and smooth if $k=0$ or $k=1$.
Let $U':=\pi'^{-1}(U)$.
Then $V'$ is a Zariski open subspace in $U'$ and the restriction map $U'\to U$ is an extension of the covering $V'\to V$.
The pair $(U', \Delta'_{U'}:=U'\setminus V')$ is toroidal and $\pi'\colon (U', \Delta'_{U'})\to (U, \Delta|_{U})$ is a toroidal morphism since it is the restriction of the toroidal embedding $X_{\sigma, N'}\subseteq X_{\tau, N'}$ and $\pi' \colon X_{\tau, N'}\to X_{\tau, N}$ is a toric morphism.
By Lemma~\ref{uniqfin}, the above $(U', \Delta'_{U'})$ constructed for each $p\in U\subseteq X$ and $V'$ can naturally glued to obtain a toroidal pair $(X', \Delta')$ with $X'\setminus \Delta'=Y'$ and a finite toroidal morphism $\pi\colon (X', \Delta')\to (X, \Delta)$ which is an extension of $\pi\colon Y'\to Y$.

If $\Delta$ is smooth, then $k=0$ or $k=1$ for the above proof and hence $X'$ and $\Delta'$ are smooth by construction, whence (i) holds.

If $\pi\colon Y'\to Y$ is Galois with the Galois group $G$, then $V'\to V$ as above is  a $G_{V'}$-covering, where $G_{V'}$ is the stabilizer group of the connected component $V'$.
Then $U'\to U$ is also a $G_{V'}$-covering and $\pi^{-1}(U)\to U$ becomes a $G$-covering (this can also be seen by Lemma~\ref{uniqfin}).
For any $x\in X'$, we put $p:=\pi(x)\in X$ and take $U$ and $V'$ as above such that $x\in U'$.
Then the stabilizer group $G_{x}$ at $x$ equals to $G_{V'}\cong \pi_{1}(V)/\pi_{1}(V')\cong N/N'$.
Hence the action of $G_{x}$ on the germ $(x\in X')\cong (x_{\tau}\in X_{\tau, N'})$ factors through the torus action $T_{N'}$ on $X_{\tau, N'}$, whence (ii) holds.
\end{proof}

As a corollary, we obtain the following Grauert--Remmert's theorem.

\begin{cor}[Extension of finite coverings {\cite[Theorem XII.5.4]{GrRa}}] \label{extfin}
Let $X$ be a normal analytic variety and $Y\subseteq X$ a non-empty Zariski open subspace.
Let $\pi\colon Y'\to Y$ be a finite covering from a normal analytic variety $Y'$.
Assume that the branch locus $B_{Y}$ of $\pi$ extends to an analytic subset $B_{X}$ of $X$.
Then there exists a finite covering $X'\to X$ from a normal analytic variety $X'$ which is an extension of $\pi$. 
\end{cor}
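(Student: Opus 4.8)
The plan is to reduce the statement to a situation where the analytic Abhyankar lemma (Proposition~\ref{analAbh}) applies, namely a smooth base carrying a reduced normal crossing divisor, and then to fill in a remaining locus of codimension at least two. Set $A:=B_{X}\cup(X\setminus Y)$, which is a closed, nowhere dense analytic subset of $X$ precisely because the branch locus is assumed to extend analytically; let $A_{1}$ denote the union of its codimension one (divisorial) components. Over $X\setminus A=Y\setminus B_{X}$ the covering $\pi$ restricts to a finite \'etale covering. First I would pass to the Zariski open subset $X_{0}\subseteq X$ obtained by deleting the singular locus of $X$, the components of $A$ of codimension at least two, and the locus where $A_{1}$ fails to be a normal crossing divisor on the smooth part of $X$. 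Each of these is analytic of codimension at least two in $X$ (since $X$ is normal and $A_{1}$ is a reduced divisor, whose non-normal-crossing locus is a proper analytic subset of $A_{1}$), so $\Sigma:=X\setminus X_{0}$ has codimension at least two. On $X_{0}$ the space is smooth and $\Delta_{0}:=A_{1}\cap X_{0}$ is a reduced normal crossing divisor with $X_{0}\setminus\Delta_{0}=X_{0}\setminus A$.

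Next, I would apply Proposition~\ref{analAbh} to the finite \'etale covering $\pi^{-1}(X_{0}\setminus\Delta_{0})\to X_{0}\setminus\Delta_{0}$, obtaining a finite toroidal morphism $\rho_{0}\colon(X'_{0},\Delta'_{0})\to(X_{0},\Delta_{0})$ with $X'_{0}$ normal and $X'_{0}\setminus\Delta'_{0}\cong\pi^{-1}(X_{0}\setminus\Delta_{0})$. To see that $X'_{0}$ genuinely extends $\pi$ over the larger open set $Y\cap X_{0}$, on which $\pi$ may ramify along the divisorial part of $B_{X}$, I would compare the two finite coverings $X'_{0}\times_{X_{0}}(Y\cap X_{0})$ and $\pi^{-1}(Y\cap X_{0})$ of the normal variety $Y\cap X_{0}$: both are normal, and they are canonically isomorphic over the common \'etale locus $X_{0}\setminus\Delta_{0}$, so Lemma~\ref{uniqfin} upgrades this to an isomorphism over $Y\cap X_{0}$. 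Thus $\rho_{0}\colon X'_{0}\to X_{0}$ is a finite covering from a normal variety that restricts to $\pi$ over $Y\cap X_{0}$.

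It then remains to extend $\rho_{0}$ across $\Sigma$. Since $Y'$ is a variety, $K(Y')$ is a finite field extension of $K(X)$, and I would take $X'$ to be the normalization of $X$ in $K(Y')$. Its restriction over $X_{0}$ is the normalization of $X_{0}$ in $K(Y')$, hence equals $X'_{0}$ by uniqueness of normalization, so the only substantive point is that $\rho\colon X'\to X$ is finite; this is where I expect the main obstacle to be, namely finiteness across the codimension at least two locus $\Sigma$. The plan is to deduce it from the coherence of the direct image $j_{*}\big((\rho_{0})_{*}\O_{X'_{0}}\big)$ under the open immersion $j\colon X_{0}\hookrightarrow X$: because $X$ is normal the pushed-forward $\O_{X_{0}}$-algebra is $S_{2}$, and a coherent $S_{2}$ sheaf extends coherently across the analytic subset $\Sigma$ of codimension at least two, so taking the analytic spectrum of the resulting coherent $\O_{X}$-algebra yields the finite covering $\rho\colon X'\to X$. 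Finally, $X'|_{Y}$ and $Y'$ are finite coverings of $Y$ from normal varieties agreeing over $Y\cap X_{0}$, whose complement $Y\cap\Sigma$ is analytic of codimension at least two in $Y$, so one last application of Lemma~\ref{uniqfin} gives $X'|_{Y}\cong Y'$ over $Y$, showing that $X'\to X$ is the desired extension. Note that the hypothesis that $B_{Y}$ extends to an analytic subset $B_{X}$ is used twice and essentially: it makes $A$ analytic, which is what allows the construction of $X_{0}$, and it controls the branching of the normalization near $\Sigma$.
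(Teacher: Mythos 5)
Your argument is essentially correct, but it takes a genuinely different route from the paper. The paper handles the bad locus of $X$ by resolving it: it takes a resolution $\widetilde{X}\to X$ on which the preimage of $(X\setminus Y)\cup B_X$ is simple normal crossing, applies Proposition~\ref{analAbh} there, and then descends to $X$ by taking the Stein factorization of $\widetilde{X'}\to\widetilde{X}\to X$ (so finiteness over $X$ comes for free from properness of $\widetilde{X'}\to X$ plus Stein factorization, and compatibility with $\pi$ from Lemma~\ref{uniqfin}). You instead \emph{remove} the bad locus: you pass to the codimension-$\ge 2$ complement $X_0$ where $X$ is smooth and the divisorial boundary is normal crossing, apply Proposition~\ref{analAbh} there, and then extend the resulting finite covering across $\Sigma$ by a coherence argument. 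Both routes use Proposition~\ref{analAbh} as the engine; yours avoids resolution of singularities but pays for it by invoking the extension theorem for finite coverings of a normal complex space across an analytic subset of codimension at least two (equivalently, coherence of $j_*$ of the $S_2$ direct-image algebra). That is a true but nontrivial theorem of Grauert--Remmert/Siu--Trautmann type which you should cite rather than flag as ``the main obstacle''; the paper's Stein-factorization step is the cheaper way to get finiteness once one has already paid for a resolution.

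Two local corrections. First, the sentence ``$K(Y')$ is a finite field extension of $K(X)$, and I would take $X'$ to be the normalization of $X$ in $K(Y')$'' is not valid in the analytic category: meromorphic functions on the Zariski open subset $Y$ need not extend to $X$ (so $\mathcal{M}(Y')$ is a finite extension of $\mathcal{M}(Y)$, not of $\mathcal{M}(X)$), and ``normalization of $X$ in a field'' is not an analytic construction. This does not sink the proof, because you immediately replace it by the correct analytic substitute $X':=\mathrm{Specan}\bigl(j_*\rho_{0*}\mathcal{O}_{X'_0}\bigr)$, but the field-theoretic framing should be deleted. Second, you should say a word about why this $X'$ is normal (e.g.\ normalize it, or note that $\mathcal{O}_{X'}$ is the full direct image of $\mathcal{O}_{X'_0}$ across a codimension-$\ge 2$ subset of $X'$), and, as a cosmetic point, you should arrange $B_X\cap Y=B_Y$ so that $A$ is genuinely a proper analytic subset of $X$.
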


\begin{proof}
We take a resolution $\widetilde{X}\to X$ of $X$ such that the inverse image of the union of $X\setminus Y$ and $B_{X}$ has simple normal crossing support, which is denoted by $\Delta$.
Applying Proposition~\ref{analAbh} to $(\widetilde{X}, \Delta)$ and the normalized base change $\widetilde{Y'}\to \widetilde{Y}\subseteq \widetilde{X}$ of $Y'\to Y\subseteq X$,
we have a finite covering $\widetilde{X'}\to \widetilde{X}$ which is compatible with $\pi$.
Taking the Stein factorization $\widetilde{X'}\to X'\to X$ of the composite map $\widetilde{X'}\to \widetilde{X}\to X$, we have the desired morphism $X'\to X$.
\end{proof}

\begin{lem}[Existence of the Galois closure] \label{Galclo}
Let $\pi\colon X'\to X$ be a finite covering between normal analytic varieties.
Then there exists a finite Galois covering $\varphi\colon X''\to X$ from a normal analytic variety $X''$ which factors through $\pi$ which is universal in the following sense:
For any finite Galois covering $\psi\colon Y\to X$ from a normal analytic variety which factors through $\pi$, there exists uniquely a Galois covering $Y\to X''$ over $X'$.
\end{lem}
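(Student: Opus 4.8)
The plan is to construct $X''$ over the étale locus of $\pi$ by purely topological (covering-space) considerations, and then to extend everything back over $X$ using the extension and uniqueness results already established.

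First I would let $U\subseteq X$ denote the Zariski open locus over which $\pi$ is étale and set $U':=\pi^{-1}(U)$. Since $X'$ is a normal analytic variety it is irreducible, so $U'$ is connected; thus the finite étale covering $U'\to U$ corresponds, after fixing a base point, to a subgroup $H\le \pi_1(U)$ of finite index $d=\deg\pi$, well defined up to conjugacy. (Here I use that finite étale coverings of the connected complex space $U$ coincide with finite topological coverings, hence are classified by finite-index subgroups of $\pi_1(U)$.) Let $N:=\bigcap_{g\in\pi_1(U)}gHg^{-1}$ be the normal core of $H$, the largest normal subgroup of $\pi_1(U)$ contained in $H$; it has finite index because $H$ does. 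The subgroup $N$ determines a connected finite étale Galois covering $U''\to U$ with Galois group $G:=\pi_1(U)/N$ which, since $N\subseteq H$, factors through $U'\to U$.

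Next I would extend this to a covering of $X$. Because $U''\to U$ is étale its branch locus in $U$ is empty, which trivially extends to an analytic subset of $X$; applying Corollary~\ref{extfin} produces a finite covering $\varphi\colon X''\to X$ from a normal analytic variety $X''$ restricting to $U''\to U$ over $U$, and $X''$ is irreducible because the dense open $U''$ is connected. To see that $\varphi$ factors through $\pi$ and is Galois, I would extend the relevant morphisms off the dense open $U''$ by the method of Lemma~\ref{uniqfin} (take the closure of the graph inside the fiber product, normalize, and invoke uniqueness of Stein factorizations): the $X$-morphism $U''\to U'\subseteq X'$ extends to $X''\to X'$, and each deck transformation $g\in G$ of $U''/U$ extends uniquely to an automorphism of $X''/X$. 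These automorphisms assemble into a faithful $G$-action, and the quotient $X''/G$ is a normal analytic variety finite over $X$ that agrees with $U''/G=U$ over $U$; by the uniqueness in Lemma~\ref{uniqfin} we get $X''/G\cong X$, so $\varphi$ is Galois with group $G$.

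Finally, for the universal property I would start from a finite Galois covering $\psi\colon Y\to X$ with $Y$ normal factoring through $\pi$, and restrict to the dense open $V_{0}:=X\setminus(B_{\pi}\cup B_{\psi})$, where $B_{\pi},B_{\psi}$ are the branch loci, so that over $V_{0}$ both $\pi$ and $\psi$ are étale. Writing $q\colon \pi_1(V_{0})\twoheadrightarrow\pi_1(U)$ for the surjection induced by $V_{0}\subseteq U$, the covering $Y|_{V_{0}}\to V_{0}$ corresponds to a normal subgroup $K\trianglelefteq\pi_1(V_{0})$, and the factorization $Y\to X'$ forces $K\subseteq q^{-1}(H)$; since $K$ is normal this gives $K\subseteq q^{-1}(N)$, the normal core of $q^{-1}(H)$. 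Hence over $V_{0}$ there is a Galois covering $Y|_{V_{0}}\to X''|_{V_{0}}$ compatible with the maps to $X'$, which I would extend to a Galois covering $Y\to X''$ over $X'$ exactly as above; uniqueness follows because any two such $X'$-morphisms coincide on the dense open $V_{0}$. I expect the main obstacle to be precisely these passages between the étale locus and the whole space: one must control the surjection $\pi_1(V_{0})\to\pi_1(U)$ (the branch loci are divisors, so it need not be injective) well enough to compare normal cores, and one must extend morphisms—not merely isomorphisms—from a dense Zariski open of a normal analytic variety, for which the graph-closure and normalization argument behind Lemma~\ref{uniqfin} is the key technical tool.
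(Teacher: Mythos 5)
Your proposal is correct and follows essentially the same route as the paper: reduce to the \'etale locus, identify $\pi$ with a finite-index subgroup of the fundamental group, pass to its normal core to get the Galois covering there, and extend back over $X$ via Corollary~\ref{extfin} and Lemma~\ref{uniqfin}. The paper states this reduction in one line and leaves the universal property implicit, whereas you spell out the graph-closure extension of morphisms and the comparison of normal cores along the surjection $\pi_1(V_0)\twoheadrightarrow\pi_1(U)$; these added details are sound.
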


\begin{proof}
By Corollary~\ref{extfin} and Lemma~\ref{uniqfin}, we may assume that $X$ is smooth and $\pi$ is \'{e}tale.
Let $\Pi:=\pi_{1}(X)$ and $\Pi':=\pi_{1}(X')$.
Then $\pi$ corresponds to the inclusion $\pi_{*}\colon \Pi' \hookrightarrow \Pi$.
Let $\Pi'':=\cap_{\gamma\in \Pi}\gamma\Pi'\gamma^{-1}$ denote the maximal normal subgroup of $\Pi$ contained in $\Pi'$.
By the Galois theory of covering maps, the desired variety $X''$ can be obtained by the \'{e}tale quotient $X'':=\widetilde{X}/\Pi''$ of the universal cover $\widetilde{X}$ of $X$ by the action of $\Pi''$.
\end{proof}

\begin{lem}[Existence of Galois alterations] \label{Galalt}
Let $\pi\colon X'\to X$ be an alteration between normal analytic varieties.
Then there exists a Galois alteration $\varphi\colon X''\to X$ which factors through $\pi$.
\end{lem}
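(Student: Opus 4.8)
The plan is to split the alteration into its finite and bimeromorphic parts by Stein factorization, to apply the Galois closure for finite coverings (Lemma~\ref{Galclo}) to the finite part, and then to symmetrize the bimeromorphic part so that the residual action of a subgroup is upgraded to an action of the whole Galois group.

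First I would take the Stein factorization $X'\xrightarrow{h}Y\xrightarrow{g}X$ of $\pi$. Since $\pi$ is generically finite and $\dim X'=\dim X$, the morphism $h$ is a modification while $g$ is a finite covering of normal analytic varieties; in fact $Y$ is the normalization of $X$ in the function field $k(X')$, so that $k(Y)=k(X')$. Applying Lemma~\ref{Galclo} to $g$ produces a finite Galois covering $W\to X$ with group $G$ which factors as $W\to Y\to X$, and then $W\to Y$ is itself Galois with group $H:=\operatorname{Gal}(k(W)/k(Y))\le G$.

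Next I would transport the modification over the Galois covering. Let $V$ be the normalization of the main component of the fiber product $X'\times_{Y}W$. Since $k(X')\otimes_{k(Y)}k(W)=k(W)$ is a field, this fiber product is generically irreducible, so $V\to W$ is a modification, whereas $V\to X'$ is the finite $H$-covering obtained by base change of $W\to Y$; in particular $H$ acts on $V$ over $X'$ and $V\to X'\to X$ factors $\pi$. The remaining point is to promote this $H$-action to an action of $G$, and here I would use the standard symmetrization over the Galois group: set $X''$ to be the normalization of the main component of the fiber product over $W$ of the translates $V^{(\gamma)}:=V\times_{W,\gamma}W$ for $\gamma\in G$. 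Then $G$ acts on $X''$ by permuting the factors compatibly with its action on $W$, the structure morphism $X''\to W$ is a $G$-equivariant modification with $k(X'')=k(W)$, and the projection to the factor $\gamma=e$ gives a modification $X''\to V$. Composing $X''\to V\to X'\to X$ shows that $X''\to X$ factors through $\pi$.

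Finally I would verify that $(X''\to X,G)$ is a Galois alteration. The variety $X''$ is normal and irreducible, the $G$-action is faithful because $G=\operatorname{Gal}(k(W)/k(X))$ acts faithfully on $k(X'')=k(W)$, and since $X''\to W$ is a $G$-equivariant modification and $W/G\cong X$, the quotient $X''/G\to X$ is bimeromorphic. Thus $X''\to X$ is an alteration, Galois with group $G$, dominating $X'$. I expect the main obstacle to be exactly the passage from the $H$-action to the $G$-action: the modification $V\to W$ is tied to one chosen component of $X'\times_{Y}W$ and therefore only sees the subgroup $H$, so one must average over $G$ while simultaneously preserving that the map remains a modification, that the relevant main components stay irreducible, and that the factorization through $\pi$ survives. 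Lemma~\ref{uniqfin} is convenient here to identify the analytic finite coverings appearing in the base changes.
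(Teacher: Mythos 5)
Your proof is correct and follows essentially the same route as the paper: Stein factorization of $\pi$, Galois closure of the finite part via Lemma~\ref{Galclo}, and then symmetrization over the Galois group $G$. Your fiber product over $W$ of the translates $V^{(\gamma)}$ is, after taking the main component and normalizing, the same object as the paper's normalization of the closure of the graph of $\prod_{g\in G}\psi_{g}$ in $Y''\times\prod_{g\in G}X'$, with the same permutation action of $G$ and the same projection to the $\gamma=e$ factor providing the factorization through $\pi$.
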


\begin{proof}
We take the Stein factorization $X'\to Y'\xrightarrow{\pi'} X$ of $\pi$.
By Lemma~\ref{Galclo}, there exists the Galois closure $\varphi'\colon Y''\to X$ of $\pi'$.
Let $G$ denote the Galois group of $\varphi'$.
For each $g\in G$, we define a meromorphic map $\psi_{g}\colon Y''\dasharrow X'$ by the composite of meromorphic maps
$$
Y''\xrightarrow{g} Y'' \to Y'\leftarrow X'.
$$
The product $\prod_{g\in G}\psi_{g}\colon Y''\dasharrow \prod_{g\in G}X'$ is also a meromorphic map.
Let $\overline{\Gamma}$ denote the closure of the graph $\Gamma$ of $\prod_{g\in G}\psi_{g}$ in $Y'' \times \prod_{g\in G}X'$.
Then $G$ acts on $\overline{\Gamma}$ which is equivariant with respect to the projection $\overline{\Gamma}\to Y''$.
Indeed, we define the action by $g\cdot (x, (x_{h})_{h\in G}):=(gx, (x_{hg})_{h\in G})$ for $g\in G$ and $(x, (x_{h})_{h\in G})\in \overline{\Gamma}$.
Let $X''$ denote the normalization of $\overline{\Gamma}$.
Then $G$ acts on $X''$ and the composition $X''\to \overline{\Gamma}\to Y''$ is $G$-equivariant and bimeromorphic.
Thus the induced morphism $X''/G\to X=Y''/G$ is bimeromorphic.
Hence the composition $\varphi\colon X''\to X''/G \to X$ is a $G$-alteration.
The composition
$$
\psi\colon X''\to \overline{\Gamma}\to \prod_{g\in G}X'\xrightarrow{\mathrm{pr}_{e}} X'
$$
satisfies $\varphi=\pi\circ \psi$.
\end{proof}

\begin{lem} \label{equivresol}
Let $X$ be an analytic variety and $G$ a finite group acting on $X$.
Let $\varphi\colon X'\to X$ be a bimeromorphic morphism from an analytic variety $X'$.
Then there exists a $G$-equivariant resolution $\widetilde{X}\to X$ which factors through $\varphi$.
\end{lem}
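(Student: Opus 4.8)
The plan is to split the argument into two steps: first, to replace $\varphi\colon X'\to X$ by a $G$-equivariant bimeromorphic modification $X_{1}\to X$ that still factors through $\varphi$; and second, to take a $G$-equivariant resolution of $X_{1}$ by invoking the functoriality of resolution of singularities in the complex analytic category.

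For the first step I would imitate the graph construction used in the proof of Lemma~\ref{Galalt}. Since $\varphi$ is bimeromorphic, its inverse $\varphi^{-1}\colon X\dasharrow X'$ is a meromorphic map, so for each $g\in G$ the composite
$$
\psi_{g}\colon X'\xrightarrow{\varphi} X\xrightarrow{g} X\xleftarrow{\varphi} X',\qquad \psi_{g}=\varphi^{-1}\circ g\circ\varphi,
$$
is a well-defined meromorphic self-map of $X'$ satisfying $\varphi\circ\psi_{g}=g\circ\varphi$ wherever it is defined, with $\psi_{e}=\mathrm{id}_{X'}$ and, using again that $\varphi$ is bimeromorphic, $\psi_{g}\circ\psi_{h}=\psi_{gh}$. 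The product $\prod_{g\in G}\psi_{g}\colon X'\dasharrow \prod_{g\in G}X'$ is then a meromorphic map, and I let $\overline{\Gamma}$ be the closure of its graph inside $\prod_{g\in G}X'$, with $X_{1}$ the normalization of the (irreducible) analytic set $\overline{\Gamma}$. Because $\psi_{g}\circ\psi_{h}=\psi_{gh}$, the reindexing $(x_{g})_{g\in G}\mapsto (x_{gh})_{g\in G}$ for $h\in G$ sends the graph point $(\psi_{g}(x))_{g}$ of $x$ to the graph point $(\psi_{gh}(x))_{g}$ of $\psi_{h}(x)$, hence preserves $\overline{\Gamma}$; this defines a $G$-action on $\overline{\Gamma}$ that lifts uniquely to $X_{1}$.

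Projection to the factor indexed by $e$ gives a bimeromorphic morphism $X_{1}\to X'$ (its inverse being the graph embedding), so that $X_{1}\to X'\xrightarrow{\varphi}X$ factors through $\varphi$; and this composite is $G$-equivariant, since under $h\in G$ the $e$-th component of $(\psi_{g}(x))_{g}$ becomes $\psi_{h}(x)$, whose image under $\varphi$ is $h\cdot\varphi(x)$. For the second step I would apply resolution of singularities to the normal analytic variety $X_{1}$: as resolution in the complex analytic category can be chosen functorially (canonically), it is compatible with the finite $G$-action and thus furnishes a $G$-equivariant resolution $\widetilde{X}\to X_{1}$. The composite $\widetilde{X}\to X_{1}\to X'\xrightarrow{\varphi}X$ is then a proper bimeromorphic morphism from a smooth variety which is $G$-equivariant and factors through $\varphi$, as desired.

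The first step is essentially a routine adaptation of the graph-closure technique already established in Lemma~\ref{Galalt}, the only points needing care being the verification of $\psi_{g}\circ\psi_{h}=\psi_{gh}$ and that the resulting permutation action makes $X_{1}\to X$ equivariant. I expect the main obstacle to be the second step, namely the appeal to a \emph{functorial} (hence $G$-equivariant) resolution of singularities for complex analytic varieties; everything else reduces to the meromorphic-map bookkeeping of the first step.
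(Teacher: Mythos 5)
Your proposal is correct and follows essentially the same route as the paper: the paper's proof likewise obtains a $G$-equivariant bimeromorphic morphism factoring through $\varphi$ by "the same proof as in Lemma~\ref{Galalt}" (the graph-closure construction you spell out) and then invokes a $G$-equivariant (functorial) resolution, citing Bierstone--Milman. Your second step's appeal to canonical resolution in the analytic category is exactly the paper's reference to \cite{BiMi}, so there is no substantive difference.
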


\begin{proof}
By the same proof as in Lemma~\ref{Galalt}, there exists a $G$-equivariant bimeromorphic morphism $X''\to X$ which factors through $\varphi$.
Taking a $G$-equivariant resolution $\widetilde{X}\to X''$ (cf.\ \cite{BiMi}), we conclude the proof. 
\end{proof}

\begin{lem}[Kawamata's covering trick~I] \label{Kawamatacov}
Let $B$ be a smooth analytic variety which is projective over a Stein space $S$.
Let $W$ be any Stein compact subset of $S$.
Let $D$ be a reduced simple normal crossing divisor on $B$ and $D=\sum_{i}D_{i}$ the irreducible decomposition.
For each $D_{i}$, a positive integer $m_{i}$ is given.
Then after shrinking $S$ around $W$, there exists a finite abelian covering $\pi\colon B'\to B$ from a smooth analytic variety $B'$ branched along a simple normal crossing divisor such that the ramification index at each $D_{i}$ equals $m_{i}$.
\end{lem}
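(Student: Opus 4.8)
The plan is to reduce to the classical algebraic statement by first producing, for each $D_i$, an auxiliary divisor that is linearly equivalent to a multiple of $D_i$ and whose general member is smooth and meets the rest of the configuration transversally, then to take iterated cyclic covers. First I would work on the Stein base: since $B$ is projective over $S$ and $W$ is a Stein compact subset, I fix a relatively ample sheaf $\mathcal{M}$ on $B$ over $S$ and choose, after shrinking $S$ around $W$ (here I invoke Cartan's Theorem~A together with the compactness/global-generation argument already used in Lemma~\ref{projemb}), a positive integer $n$ so that $\mathcal{O}_B(m_i D_i)\otimes \mathcal{M}^{\otimes n}$ is globally generated along $\pi^{-1}(W)$ for every $i$ simultaneously. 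Because $D=\sum_i D_i$ is simple normal crossing and there are only finitely many components on the relevant relatively compact piece (by the Remark following the definition of toroidal pairs), a Bertini-type argument lets me pick for each $i$ a general divisor $H_i\in |\mathcal{O}_B(m_i D_i)\otimes \mathcal{M}^{\otimes n}|$ such that $H_i$ is smooth and $D+\sum_i H_i$ remains simple normal crossing after shrinking $S$ around $W$.

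Next I would build the cyclic covers one component at a time. For each fixed $i$, the line bundle $\mathcal{L}_i:=\mathcal{O}_B(D_i)\otimes \mathcal{M}^{\otimes n/??}$—more precisely, I set $\mathcal{L}_i$ so that $\mathcal{L}_i^{\otimes m_i}\cong \mathcal{O}_B(m_i D_i + H_i')$ for a suitable general section—admits a section vanishing along a reduced divisor, and taking the normalized $m_i$-th root (the $\mathrm{Specan}$ of $\bigoplus_{j=0}^{m_i-1}\mathcal{L}_i^{-j}$ twisted by the section) produces a finite cyclic covering $\pi_i\colon B_i\to B$ of degree $m_i$, branched with ramification index $m_i$ exactly along $D_i$ and along the smooth auxiliary divisor $H_i$, and étale elsewhere. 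Since all branch divisors are algebraic/analytic subsets of $B$, the cyclic cover exists as a normal analytic variety and its ramification is computed locally by the standard Kummer picture $x\mapsto x^{m_i}$. I then form the normalized fiber product $B':=B_1\times_B\cdots\times_B B_r$ over $B$; this is a finite abelian covering with Galois group $\prod_i \mathbb{Z}/m_i\mathbb{Z}$, branched along the simple normal crossing divisor $D+\sum_i H_i$, and the ramification index over the generic point of each $D_i$ is $m_i$ by construction.

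Finally, smoothness of $B'$ follows from Abhyankar's lemma: away from the branch locus $B'\to B$ is étale, and along the simple normal crossing branch divisor the local picture is a fiber product of Kummer coverings, so by Proposition~\ref{analAbh} (its part~(i), applied in suitable toroidal charts where the branch divisor is a coordinate hyperplane) the total space is smooth and the preimage of the branch divisor is simple normal crossing. Since the chosen $H_i$ are general smooth divisors transverse to $D$ and to each other, the local models are simplicial with each cone generated by a $\mathbb{Z}$-basis, so $B'$ is nonsingular. The main obstacle I expect is the analytic Bertini step: arranging that a single shrinking of $S$ around $W$ simultaneously yields global generation and general smooth transverse members $H_i$ for all finitely many $i$, since genericity in the analytic category is delicate and must be controlled only on a relatively compact neighbourhood of $\pi^{-1}(W)$ rather than on all of $B$; I would handle this by combining the finite-dimensional base-point-free linear system produced as in Lemma~\ref{projemb} with a transversality argument applied over the compact set $\pi^{-1}(W)$.
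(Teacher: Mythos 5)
Your overall strategy (cyclic covers branched along each $D_i$ plus an auxiliary ample member, glued by a fiber product, with the Stein-theoretic preliminaries handled via Cartan's Theorem~A and an analytic Bertini argument as in Lemma~\ref{projemb}) is the right framework and matches the paper's, but there is a genuine gap at the final smoothness claim. With a \emph{single} auxiliary divisor $H_i$ per component, the cyclic cover $B_i\to B$ is branched along the normal crossing divisor $D_i+H_i$ with index $m_i$ on \emph{both} components, and since $H_i$ is an ample twist it necessarily meets $D_i$ (for $\dim B\ge 2$). At a point of $D_i\cap H_i$ the local equation of $B_i$ is $z^{m_i}=xy$, which is already normal but has an $A_{m_i-1}$-type singularity in transverse slices; normalization does not remove it, and neither does passing to the fiber product. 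Your appeal to Proposition~\ref{analAbh}~(i) does not apply here, because that statement requires the branch divisor to be \emph{smooth}, whereas at $D_i\cap H_i$ it is a union of two transverse components both carrying ramification $m_i$; the corresponding local toric model is the cone $\langle e_1,e_2\rangle$ with respect to the sublattice $\{(a,b):a+b\equiv 0\ (\mathrm{mod}\ m_i)\}$, which is singular. So the $B'$ you construct is in general not smooth. (There is also a bookkeeping slip: you need the \emph{reduced} divisor $D_i+H_i$ to be $m_i$-divisible in $\Pic$, i.e.\ $H_i\in|\mathcal{L}^{\otimes m}(-D_i)|$ with $m_i\mid m$, rather than $H_i\in|\mathcal{O}_B(m_iD_i)\otimes\mathcal{M}^{\otimes n}|$; a branch divisor of the form $m_iD_i+H_i'$ would, after normalization, produce a cover that is \emph{unramified} along $D_i$.)

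The missing idea is the actual content of Kawamata's trick, which the paper uses: for each $i$ one takes $n=\dim B$ general members $H_{i1},\ldots,H_{in}$ of the very ample system $|\mathcal{L}^{\otimes m}(-D_i)|$ subject to the extra condition $\bigcap_{j=1}^{n}H_{ij}\cap D_i=\emptyset$, and forms one degree-$m_i$ cyclic cover $B_{ij}$ branched along $D_i+H_{ij}$ for \emph{each} $j$. Then at every point $p\in D_i$ some $H_{ij}$ misses $p$, so the factor $B_{ij}$ is locally branched along the smooth divisor $D_i$ alone; on the fiber product this makes the local equation of $D_i$ an $m_i$-th power of a coordinate up to a unit, which untwists the remaining equations $z_{ij'}^{m_i}=x_ih_{ij'}$ into smooth ones. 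This is the ``simple calculation of the defining equation'' that yields non-singularity of $B_{11}\times_B\cdots\times_B B_{kn}$. Your proof needs this modification; the rest of your reductions (finiteness of the set of components after shrinking $S$ around $W$, global generation, and Bertini over a compact neighbourhood of $\pi^{-1}(W)$) are consistent with the paper's argument.
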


\begin{proof}
The proof is similar to {\cite[\S 5.3]{AbKa}}.
For the readers' convenience, we sketch the proof.
After shrinking $S$ around $W$,
we may assume that $D$ has finitely many irreducible components, say $D_{1},\ldots, D_{k}$.
Let $\mathcal{L}$ be a relatively ample sheaf on $B$ over $S$.
Then after shrinking $S$ around $W$, the sheaf $\mathcal{L}^{\otimes m}(-D_{i})$ is relatively very ample over $B$ for each $i=1,\ldots,k$ and sufficiently large and divisible $m$ (see the proof of Lemma~\ref{projemb}).
By Bertini's theorem {\cite[(II.5)~Theorem]{Mir}}, we can take $n:=\dim B$ smooth divisors $H_{i1},\ldots H_{in}$ on $B$ defined by general global sections of $\mathcal{L}^{\otimes m}(-D_{i})$ such that the intersection $\cap_{j=1}^{n}H_{ij}\cap D_{i}$ is empty and the divisor $\sum_{i=1}^{k}D_{i}+\sum_{i,j}H_{ij}$ is reduced and simple normal crossing.
Let $B_{ij}\to B$ be the standard cyclic covering of degree $m_{i}$ branched along $D_{i}+H_{ij}$ defined in the associated line bundle of $\mathcal{L}^{\otimes m/m_{i}}$.
Then the fiber product $B_{11}\times_{B}B_{12}\times_{B}\cdots \times_{B}B_{kn}$ is non-singular by a simple calculation of the defining equation and the projection
$$
\pi\colon B':=B_{11}\times_{B}B_{12}\times_{B}\cdots \times_{B}B_{kn}\to B
$$
is a desired abelian covering.
\end{proof}

More generally, the following strengthened version of Lemma~\ref{Kawamatacov} holds:
\begin{lem}[Kawamata's covering trick~II] \label{latticealt}
Let $(B, \Delta_{B})$ be a simplicial strict toroidal pair with the associated polyhedral complex $\Sigma_{(B, \Delta_{B})}$.
Assume that $B$ is projective over a Stein space $S$ and fix any Stein compact subset $W$ of $S$.
Let $\Sigma'\to \Sigma_{(B, \Delta_{B})}$ be a lattice alteration from a non-singular polyhedral complex $\Sigma'$.
Then after shrinking $S$ around $W$, there exists a finite abelian covering $\pi\colon B'\to B$ such that
the following hold:

\smallskip

\noindent
(1) $(B',\Delta_{B'}:=\pi^{-1}(\Delta_{B})_{\mathrm{red}})$ is a non-singular strict toroidal pair.

\smallskip

\noindent
(2) There exists a morphism $\Sigma_{(B', \Delta_{B'})}\to \Sigma'$ which is isomorphic at each cone and each lattice in $\Sigma_{(B', \Delta_{B'})}$.

\smallskip

\noindent
(3) There exists an effective relatively ample divisor $H$ on $B$ over $S$ such that $(B, \Delta_{B}+H)$ and $(B', \Delta_{B'}+H')$ are strict toroidal, $\pi$ is toroidal between them and the following commutative diagram holds: 
$$
\xymatrix{
\Sigma_{(B', \Delta_{B'})} \ar[r] \ar[d]  & \Sigma' \ar[r]  & \Sigma_{(B, \Delta_{B})} \ar[d] \\
\Sigma_{(B', \Delta_{B'}+H')} \ar[rr]^{\varphi_{\pi}} &  & \Sigma_{(B, \Delta_{B}+H)},
}
$$
where the vertical maps are natural inclusions and $H':=\pi^{-1}(H)_{\mathrm{red}}$.

\smallskip

\noindent
(4) Let $f\colon (X, \Delta_{X})\to (B, \Delta_{B})$ be a toroidal morphism from a strict toroidal pair.
Let $X'$ be a normalization of the fiber product $X\times_{B}B'$ and $\pi'\colon X'\to X$, $f'\colon X'\to B'$ the projections.
Then $(X', \Delta_{X'}:=\pi'^{-1}(\Delta_{X})_{\mathrm{red}})$ is strict toroidal, $f'\colon (X', \Delta_{X'})\to (B', \Delta_{B'})$ is toroidal and there exists a morphism 
$\Sigma_{(X', \Delta_{X'})}\to \Sigma_{(X, \Delta_{X})}$ such that the following diagram is commutative and Cartesian:
$$
\xymatrix{
\Sigma_{(X', \Delta_{X'})} \ar[rr] \ar[d]_{\varphi_{f'}}  &  & \Sigma_{(X, \Delta_{X})} \ar[d]^{\varphi_{f}} \\
\Sigma_{(B', \Delta_{B'})} \ar[r] & \Sigma' \ar[r] & \Sigma_{(B, \Delta_{B})}.
}
$$
In particular, the morphism $\Sigma_{(X', \Delta_{X'})}\to \Sigma_{(X, \Delta_{X})}\times_{\Sigma_{(B, \Delta_{B})}}\Sigma'$ is isomorphic at each cone and each lattice in $\Sigma_{(X', \Delta_{X'})}$.
\end{lem}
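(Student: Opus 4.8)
The plan is to realize the given lattice alteration $\Sigma'\to\Sigma_{(B,\Delta_B)}$ by a single finite abelian covering $\pi\colon B'\to B$ constructed through a toroidal refinement of Kawamata's covering trick (Lemma~\ref{Kawamatacov}), and then to deduce (1)--(4) from the local toric models together with the uniqueness of finite coverings (Lemma~\ref{uniqfin}). First I would set up the combinatorial data. After shrinking $S$ around $W$ I may assume $\Delta_B=\sum_{i=1}^{k}E_i$ has finitely many components, each corresponding to a ray $\rho_i\in\Sigma_{(B,\Delta_B)}$. Comparing the primitive generator $v_i$ of $\rho_i$ in $N$ with the primitive generator $v_i'$ of $\rho_i$ in the refined lattice $N'$ gives $v_i'=m_iv_i$ for a well-defined positive integer $m_i$ (well-defined since $\Sigma'\to\Sigma_{(B,\Delta_B)}$ is a morphism of complexes), which will be the ramification index of $\pi$ along $E_i$. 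Over a singular (simplicial) stratum the refinement $N'_\sigma\subseteq N_\sigma$ encodes two independent pieces of data: the divisorial scaling $\langle v_i\rangle\supseteq\langle m_iv_i\rangle$, and the failure of $\langle v_i\rangle$ to equal $N_\sigma$, which is the quotient singularity of the local model.

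Next I would construct $\pi$, following the proof of Lemma~\ref{Kawamatacov} but adapted to the (only $\mathbb{Q}$-Cartier) boundary. For each $E_i$ I would choose, after shrinking $S$, general members $H_{ij}$ of suitable relatively very ample systems of the form $\mathcal{L}^{\otimes m}\otimes\mathcal{O}_B(-E_i)$, so that $\Delta_B+\sum_{i,j}H_{ij}$ is strict toroidal; I would then take the appropriate cyclic coverings, let $B'$ be the normalization of their fibre product, and set $H:=\sum_{i,j}H_{ij}$. The orders of the cyclic coverings are chosen from the $m_i$ together with the denominators making the $E_i$ Cartier, so that the normalized fibre product realizes exactly the sublattice $N'_\sigma$ in each local model.

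I expect this construction to be the main obstacle. Because $B$ is only simplicial, hence singular in general, neither Lemma~\ref{Kawamatacov} nor Proposition~\ref{analAbh} applies verbatim, and one must verify by a local toric computation that the global covering simultaneously produces the prescribed divisorial ramification of index $m_i$ along each $E_i$ \emph{and} resolves the quotient singularities of $B$ (the codimension $\ge 2$ part of the refinement, which is invisible to a purely divisorial cyclic cover), matching the toric covering $X_{\sigma,N'}\to X_{\sigma,N}$ near every stratum. Granting this, the local models glue by Lemma~\ref{uniqfin}, and since $\Sigma'$ is non-singular the $X_{\sigma,N'}$ are smooth, which gives (1); reading off the strata of $(B',\Delta_{B'})$ from these local models identifies $\Sigma_{(B',\Delta_{B'})}$ with $\Sigma'$ cone- and lattice-wise, giving (2).

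For (3) I would use that $\pi$ is ramified both along $\Delta_B$ and along the auxiliary divisors $H$, so it is toroidal precisely with respect to the enlarged boundaries $\Delta_B+H$ and $\Delta_{B'}+H'$; this, together with the relative ampleness of $H$ (inherited from the very ample systems used to build the $H_{ij}$) and the compatibility of the associated morphisms of complexes under the inclusions $\Sigma_{(B',\Delta_{B'})}\hookrightarrow\Sigma_{(B',\Delta_{B'}+H')}$ and $\Sigma_{(B,\Delta_B)}\hookrightarrow\Sigma_{(B,\Delta_B+H)}$, yields the commutative diagram. Finally, for (4) I would take $X'$ to be the normalization of $X\times_B B'$ and check, again on local toric models, that the fibre product of the toroidal morphism $f$ with the finite toroidal covering $\pi$ is, after normalization, toroidal, so that $f'$ is toroidal; the Cartesian assertion is then the local identity $\Sigma_{(X',\Delta_{X'})}=\Sigma_{(X,\Delta_X)}\times_{\Sigma_{(B,\Delta_B)}}\Sigma'$, with cones $\sigma_X\times_{\sigma_B}\sigma'$ and lattices $N_{\sigma_X}\times_{N_{\sigma_B}}N'_\sigma$, which follows from the description of fibre products of polyhedral complexes recalled above.
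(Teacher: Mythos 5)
Your overall strategy --- realize the lattice alteration by a Kawamata-type abelian covering built from cyclic covers branched along $E_i+H_{ij}$, check everything on local toric models, and glue via Lemma~\ref{uniqfin} --- is exactly the route the paper intends: its entire proof is the sentence ``the proof is similar to the arguments in [AbKa, \S 7] and Lemma~\ref{Kawamatacov}.'' You also correctly observe that non-singularity of $\Sigma'$ forces $N'_{\sigma}=\bigoplus_i m_i\Z v_i$ with $m_i$ depending only on the ray, which is what makes a divisorially-prescribed covering a plausible candidate at all. Your treatment of (3) and (4) (toroidality only after enlarging the boundary by $H$, and the Cartesian diagram read off from the fibre-product description of complexes) matches the intended argument.

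However, the step you explicitly defer (``Granting this\dots'') is a genuine gap, not a routine verification, precisely in the simplicial-but-singular case the lemma is stated for. A normalized fibre product of cyclic covers branched along divisors is classified, over the torus of a local model, by a homomorphism $N_{\sigma}\to\prod_i\Z/d_i$ factoring through the orders of vanishing along the $E_i$; its kernel therefore always contains information only about the ray directions, and in general it is \emph{not} the prescribed sublattice $\bigoplus_i m_i\Z v_i$ when $N_{\sigma}\supsetneq\bigoplus_i\Z v_i$. Concretely, for the $A_1$ local model $N=\Z^2+\Z\cdot\tfrac12(1,1)$, $\sigma=\langle e_1,e_2\rangle$ and $m_1=m_2=1$, the required covering is the index-two cover \'etale in codimension one (branched only over the singular point), which no cover obtained from roots of sections of line bundles along the $E_i+H_{ij}$ produces; moreover $E_i+H_{ij}$ is not even Cartier there, so the cyclic cover with ramification index exactly $m_i$ along $E_i$ that you invoke does not exist as written. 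Closing this requires an additional idea (e.g.\ first performing the covering that realizes the index of $\bigoplus_i\Z v_i$ in $N_\sigma$, or arguing that one may replace $\Sigma'$ by a deeper diagonal lattice alteration --- which would, however, prove a weaker statement than (2)). That said, in the paper's only application of this lemma (Theorem~\ref{weaksemistred}) the pair $(B,\Delta_B)$ is already non-singular by Theorem~\ref{equidimensionalred}, and in that case your construction reduces verbatim to Lemma~\ref{Kawamatacov} and is complete.
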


\begin{proof}
The proof is similar to the arguments in {\cite[\S 7]{AbKa}} and Lemma~\ref{Kawamatacov}. 
\end{proof}


\subsection{Family of nodal curves}

\begin{defn}[Prestable curve and stable curve]
Let $f\colon X\to B$ be a morphism of complex analytic spaces
and $\sigma_{1},\ldots,\sigma_{r}$ sections of $f$.
We call $f\colon (X,\sigma_{1},\ldots,\sigma_{r})\to B$ an {\em $r$-pointed prestable curve over $B$} if $f$ is a proper flat morphism  all of whose fibers are nodal curves and the sections $\{\sigma_{i}\}_{i=1}^{r}$ are pairwise disjoint and contained in the smooth locus of $f$ in $X$.
If moreover every fiber $(f^{-1}(p),\sigma_{1}(p),\ldots,\sigma_{r}(p))$ is an $r$-pointed stable curve, it is called an {\em $r$-pointed stable curve over $B$}.

\end{defn}

\begin{lem} \label{prestablequot}
Let $f\colon (X, \sigma_{1},\ldots, \sigma_{r})\to B$ be an $r$-pointed prestable curve over a normal analytic variety $B$.
Let $G$ be a finite group acting on $X$ over $B$ such that $f$ is $G$-invariant and the action permutes the sections $\{\sigma_{i}\}_{i=1}^{r}$.
Then the quotient $(X/G, \{\sigma_{i}\}_{i=1}^{r}/G)$ is a pointed prestable curve over $B$.
\end{lem}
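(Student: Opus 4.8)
The plan is to verify the three defining conditions of a pointed prestable curve for the quotient family $\bar f\colon X/G\to B$ one at a time, the two crucial inputs being the existence of the analytic quotient and the exactness of the functor of $G$-invariants in characteristic zero.

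First I would construct the quotient and dispose of the easy conditions. Since $G$ is a finite group acting on the Hausdorff, second countable analytic space $X$, the quotient $X/G$ exists as a reduced complex analytic space and the quotient map $q\colon X\to X/G$ is finite and surjective. As $f$ is $G$-invariant, it factors as $f=\bar f\circ q$ for a unique morphism $\bar f\colon X/G\to B$, and I set $\bar\sigma_{i}:=q\circ\sigma_{i}$, which are sections of $\bar f$. Because $q$ is finite and surjective (hence proper) and $f=\bar f\circ q$ is proper, $\bar f$ is proper as well.

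The main algebraic point is flatness, and here I would use the Reynolds operator $e=\tfrac{1}{|G|}\sum_{g\in G}g$. Fix $p\in X/G$ lying over $b\in B$, let $q^{-1}(p)=\{x_{1},\ldots,x_{s}\}$ be the corresponding $G$-orbit, and put $A:=\O_{B,b}$ and $R:=\prod_{i}\O_{X,x_{i}}=(q_{*}\O_{X})_{p}$, so that $\O_{X/G,p}=R^{G}$. Since $G$ acts over $B$, it acts $A$-linearly on $R$, and $e$ is an $A$-linear projection of $R$ onto $R^{G}$; thus $R^{G}$ is a direct summand of $R$ as an $A$-module. As $f$ is flat, $R$ is $A$-flat, and a direct summand of a flat module is flat, so $\O_{X/G,p}$ is $A$-flat and $\bar f$ is flat. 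The same exactness identifies the fibers: writing $H:=G_{x_{1}}$ for the stabilizer, one has $R^{G}\cong\O_{X,x_{1}}^{H}$, and taking $H$-invariants of $0\to\mathfrak m_{b}\O_{X,x_{1}}\to\O_{X,x_{1}}\to\O_{X_{b},x_{1}}\to0$ together with the identity $(\mathfrak m_{b}\O_{X,x_{1}})^{H}=\mathfrak m_{b}\O_{X,x_{1}}^{H}$ (again from $e$) yields a canonical isomorphism $\O_{(X/G)_{b},p}\cong(\O_{X_{b},x_{1}})^{H}$. In other words, near $p$ the fiber $(X/G)_{b}$ is the quotient of the nodal curve germ $(X_{b},x_{1})$ by the finite group $H$.

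The heart of the matter, which I expect to be the main obstacle, is then the purely local statement that the quotient of a nodal curve germ by a finite group of automorphisms fixing the point is again a nodal curve germ. If $x_{1}$ is a smooth point of $X_{b}$, then $H$ acts on the germ $(\mathbb C,0)$ through a character $H\to\mathbb C^{*}$ and the quotient is smooth. If $x_{1}$ is a node, then $H$ either preserves the two branches, acting by characters on each, so that the quotient is again a node, or it swaps the branches, in which case the explicit computation on $\O_{X_{b},x_{1}}\cong\mathbb C\{u,v\}/(uv)$ using $u^{n}+v^{n}=(u+v)^{n}$ modulo $(uv)$ shows that the invariant ring is regular and $p$ is a smooth point; the swapping case is where one must rule out cuspidal or non-reduced behaviour. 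In every case $(X/G)_{b}$ is reduced with at worst nodal singularities, i.e.\ a nodal curve. Finally, the $G$-action permutes the $\sigma_{i}$, so sections in one orbit have equal images while sections in distinct orbits keep disjoint images, because the $\sigma_{i}$ are pairwise disjoint; and each $\bar\sigma_{i}$ lies in the smooth locus of $\bar f$, since the stabilizer of a point $\sigma_{i}(b)$ must fix the whole section (the $\sigma_{i}$ being disjoint) and the quotient fiber is smooth there by the computation above. Assembling these facts shows that $\bar f\colon(X/G,\bar\sigma_{1},\ldots,\bar\sigma_{r})\to B$ is a pointed prestable curve.
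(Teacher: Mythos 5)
Your argument is correct and is essentially the one the paper relies on: the paper's proof of this lemma is a one-line appeal to de Jong's Lemma~4.3 (asserting it carries over to the analytic setting), and that proof is precisely your combination of (a) flatness and compatibility with fibers for the invariant ring $R^{G}$ via the Reynolds operator and (b) the local classification of finite quotients of smooth and nodal curve germs. The only step I would write out more carefully is the branch-swapping case for a general stabilizer $H$ rather than a single involution $u\leftrightarrow v$: first take invariants under the index-two subgroup of $H$ preserving the branches (which yields a node by your character computation), and then apply your $u^{n}+v^{n}=(u+v)^{n}$ argument to the residual involution after a coordinate change putting it in the standard form $U\leftrightarrow V$.
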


\begin{proof}
The proof of {\cite[Lemma~4.3]{deJ}} also works in the complex analytic setting.
\end{proof}

\begin{lem} \label{prestabletoroidal}
Let $(B, \Delta_{B})$ be a non-singular toroidal pair.  
Let $f\colon (X,\sigma_{1},\ldots,\sigma_{r})\to B$ be an $r$-pointed prestable curve over $B$ which is smooth over $B\setminus \Delta_{B}$.
Then the following hold:

\smallskip

\noindent
(1) The pair $(X, \Delta_{X}:=f^{-1}(\Delta_{B})+\sum_{i=1}^{r}\sigma_{i})$ is toroidal and $f$ is a toroidal morphism.

\smallskip

\noindent
(2) Assume that a finite group $G$ acts on $(B, \Delta_{B})$ toroidally and its action lifts to the action on $(X, \Delta_{X})$.
Let $\beta\colon X'\to X$ denote the normalized blowing up along the singular locus of $f$ and put $\Delta_{X'}:=\beta^{-1}(\Delta_{X})$.
Then $(X', \Delta_{X'})$ is a toroidal pair and the lifting action of $G$ on $(X', \Delta_{X'})$ is pre-toroidal.
\end{lem}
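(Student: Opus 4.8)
The plan is to prove both assertions by finding a toric local normal form for $f$ near an arbitrary point $x\in X$ and reading off the toroidal (resp.\ pre-toroidal) structure from it. I would separate two cases according to whether $x$ is a smooth point of the fibre $f^{-1}(f(x))$ or a node of it, noting that a node necessarily lies over $\Delta_{B}$ since $f$ is smooth over $B\setminus\Delta_{B}$. The key input is the deformation theory of a node: analytically locally, a flat family of nodal curves near a node has the form $\{xy=h\}\subseteq\C^{2}_{x,y}\times B$ with $h\in\O_{B,f(x)}$ pulled back from the versal deformation $xy=s$. Because $f$ is smooth over $B\setminus\Delta_{B}$ and $\Delta_{B}\neq B$, the function $h$ is a unit off $\Delta_{B}$, so $\mathrm{div}(h)$ is supported on $\Delta_{B}$. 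Choosing coordinates $(t_{1},\dots,t_{m})$ at $f(x)$ with $\Delta_{B}=\{t_{1}\cdots t_{k}=0\}$, I would write $h=u\prod_{i}t_{i}^{a_{i}}$ with $u$ a unit and $a_{i}\ge 0$ (with some $a_{i}>0$, as the fibre degenerates over $f(x)\in\Delta_{B}$), and absorb $u$ into $x$ to reach the toric normal form $xy=\prod_{i}t_{i}^{a_{i}}$.

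For (1), at a smooth point of the fibre the flatness of $f$ together with smoothness of the fibre shows that $f$ is smooth at $x$ and $X$ is smooth of relative dimension one; choosing a fibre coordinate $w$ and pulling back the coordinates on $B$, the divisor $f^{-1}(\Delta_{B})$ becomes $\{t_{1}\cdots t_{k}=0\}$ and any section through $x$ becomes $\{w=0\}$ (sections being pairwise disjoint and transverse to the fibres), so $\Delta_{X}$ is simple normal crossing and $f$ is the evident smooth toric projection. At a node the normal form $xy=\prod_{i}t_{i}^{a_{i}}$ realises a neighbourhood of $x$ as an affine toric variety, which is normal since its singular locus lies in the codimension-two set $\{x=y=0\}$; no section passes through $x$, the boundary $\Delta_{X}=f^{-1}(\Delta_{B})$ coincides with the toric boundary, and $f$ is the toric morphism induced by the base torus. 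In either case $x$ has a toric local model compatible with $f$, so $(X,\Delta_{X})$ is toroidal and $f$ is toroidal.

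For (2), since $\operatorname{Sing}(f)$ is intrinsic it is $G$-invariant, hence $\beta$ is $G$-equivariant and the two regions may be analysed separately. Over the smooth locus of $f$, where $\beta$ is an isomorphism, the stabiliser $G_{x}$ acts on the single fibre coordinate $w$ by a character (through $\C^{*}$) and on $B$ toroidally, so the lifted action is toroidal there. Over $\operatorname{Sing}(f)$, locally $\{x=y=0\}$, the reduced ideal $(x,y)$ is a $G_{x}$-invariant monomial ideal on the toric model, so its normalised blow-up is a toric modification and $(X',\Delta_{X'})$ is again toroidal with $\Delta_{X'}=\beta^{-1}(\Delta_{X})$ the toric boundary. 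If $G_{x}$ preserves the branches $\{x=0\},\{y=0\}$ it acts through the enlarged torus and the action stays toroidal; if $G_{x}$ exchanges them I would pass to the eigencoordinates $p=x+y$, $q=x-y$ and compute the blow-up directly, finding that at the two fixed points of the branch-swap on the exceptional divisor the local model is $X_{\sigma}\times\C$ with $G_{x}$ acting diagonally, trivially or by $-1$ on $X_{\sigma}$ and by the nontrivial sign character on the transversal factor $\C$. This is exactly a pre-toroidal chart, so the lifted action is pre-toroidal; the charts glue by canonicity of $\beta$ (cf.\ Lemma~\ref{uniqfin}), and the construction is designed to feed into the torific blow-up of Lemma~\ref{torify}.

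The main obstacle is the node case of (2) in full generality: for several degeneration parameters and exponents $a_{i}\ge 2$ the model $\{xy=\prod_{i}t_{i}^{a_{i}}\}$ is singular, so I must verify that the normalised blow-up of the monomial ideal $(x,y)$ is the toric morphism given by the corresponding subdivision, and then identify the exceptional strata on which a branch-swapping element of $G_{x}$ has fixed points in order to pin down the nontrivial character $\psi$ that makes the local model pre-toroidal rather than toroidal. This amounts to the local computations underlying \cite[Theorem~1.7]{AbdJ}, together with the bookkeeping needed to assemble the $G_{x}$-equivariant charts into a global $G$-equivariant toroidal pair $(X',\Delta_{X'})$; the remaining points (normality of $X$ and $X'$, and toroidality of $f$) then follow formally from the toric nature of all the local models.
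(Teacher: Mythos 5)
Your proposal is correct and takes essentially the same route as the paper, whose entire proof consists of the two sentences ``The claim (1) follows easily from calculations of local coordinates'' and ``The claim (2) follows from \cite[\S 1.4]{AbdJ}.'' The local normal form $xy=u\prod_i t_i^{a_i}$ at a node, the toric reading of it, and the branch-swap/eigencoordinate analysis producing the pre-toroidal chart after the normalized blow-up are precisely the computations implicit in those citations.
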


\begin{proof}
The claim (1) follows easily from calculations of local coordinates.
The claim (2) follows from {\cite[\S 1.4]{AbdJ}}.
\end{proof}

\section{Semistable reduction for curves} \label{sec3}

In this section, we will show a semistable reduction theorem for families of curves in the complex analytic setting following \cite{deJ}.

\begin{thm}[Semistable reduction for curves] \label{semistredcurve}
Let $f\colon X\to B$ be a projective surjective morphism between analytic varieties
whose general fibers are connected curves.
Let $Z$ be a proper analytic subset of $X$.
Assume that $B$ is projective over a Stein space $S$. 
Let $W$ be any Stein compact subset of $S$.
Then after shrinking $S$ around $W$, there exist a projective alteration $B'\to B$ from a smooth variety $B'$, 
a projective modification $X'\to X\times_{B} B'$ from a normal variety $X'$ onto the main component of $X\times_{B} B'$ and finite disjoint sections $\sigma_{1},\ldots, \sigma_{r}$ of the projection $f'\colon X'\to B'$ such that the following hold:

\smallskip

\noindent
(i) The proper transform on $X'$ of the $f$-horizontal part of $Z$ is the union of $\sigma_{1},\ldots, \sigma_{r}$.

\smallskip

\noindent
(ii) $f'\colon (X', \sigma_{1},\ldots,\sigma_{r})\to B'$ is an $r$-pointed prestable curve over $B'$. 

\smallskip

\noindent
(iii) $B'\to B$ is a Galois alteration with Galois group $G$ and $G$ acts on $X'$ faithfully such that $X'\to X$ is a $G$-alteration, $f'$ is $G$-equivariant and the action permutes the sections $\{\sigma_{i}\}_{i=1}^{r}$.
\end{thm}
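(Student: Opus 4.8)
The plan is to adapt de Jong's argument for families of curves \cite{deJ} to the analytic category, replacing the resolution of the moduli map (which may fail to exist analytically) by a direct semistable reduction in codimension one followed by an extension across the codimension-two locus. First I would carry out the elementary reductions. Replacing $X$ by its normalization, I may assume $X$ is normal; by Lemma~\ref{projemb} and after shrinking $S$ around $W$ I may fix a closed immersion $X\hookrightarrow \mathbb{P}^{N}\times B$, so that $Z$ has finitely many irreducible components and its $f$-horizontal part $Z_{\mathrm{hor}}$ is a well-defined horizontal divisor. The key preliminary step is to enlarge $Z_{\mathrm{hor}}$ to a horizontal divisor $D$ meeting each component of each general fiber in enough smooth points that the marked general fiber becomes a stable pointed curve; this is where the genus-zero and genus-one cases are absorbed, reducing to the situation of effective genus at least two. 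One then takes an analytic resolution of $B$ together with a modification of $X$ so that the general fiber is smooth and the discriminant locus of $f$ is a simple normal crossing divisor $\Delta_{B}$, all permitted after shrinking $S$ around $W$.

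Second I would construct the semistable reduction in codimension one. Along the generic point of each component of $\Delta_{B}$ the family is a one-parameter degeneration, and the classical one-dimensional theory \cite{KKMS} together with a monodromy computation dictates the ramification index required to make the local monodromy unipotent. Using Kawamata's covering trick (Lemma~\ref{Kawamatacov}) I produce a finite abelian cover $B_{1}\to B$ whose ramification along each branch of $\Delta_{B}$ realizes these indices, and the analytic Abhyankar lemma (Proposition~\ref{analAbh}) guarantees that this cover is toroidal and controls the local structure of the normalized pullback. After normalizing $X\times_{B}B_{1}$ and performing a toroidal modification, the pulled-back family becomes a pointed stable curve over a Zariski open subset $U\subseteq B_{1}$ whose complement has codimension at least two, with simple normal crossing discriminant.

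Third I would extend across codimension two and upgrade to the equivariant statement. The family over $U$ is a family of stable pointed curves with simple normal crossing discriminant and codimension-$\ge 2$ complement, which is exactly the input of the extension theorem of de Jong--Oort \cite{dJOo}; applying it yields a family of stable pointed curves over all of $B_{1}$. Retaining only the sections coming from $Z_{\mathrm{hor}}$ keeps the fibers nodal, so the resulting $f'$ is prestable with disjoint sections in the smooth locus, giving (i) and (ii). To obtain (iii), Lemma~\ref{Galalt} replaces $B_{1}\to B$ by a Galois alteration $B'\to B$ with group $G$, Lemma~\ref{equivresol} provides a $G$-equivariant resolution factoring through all the above modifications, and Lemma~\ref{prestablequot} ensures the relevant quotients remain prestable; one then checks that $G$ permutes the sections and acts faithfully.

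The main obstacle is the second step: carrying out the codimension-one semistable reduction directly, namely pinning down the correct ramification data from the local monodromy and verifying, via Abhyankar's lemma, that the normalized cover is toroidal and the resulting family prestable in codimension one. This is precisely the place where de Jong's appeal to the fine moduli space of pointed stable curves and to resolution of indeterminacy cannot be transcribed to the complex analytic category, so the argument must be replaced by the explicit toroidal construction above and then globalized by \cite{dJOo}.
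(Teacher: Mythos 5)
Your overall architecture --- codimension-one reduction via Kawamata's covering trick, extension across the codimension-two locus by de Jong--Oort, then Galois closure and equivariant resolution --- matches the paper's. But the way you absorb the low-genus cases creates a genuine gap. You propose to add marked points so that the \emph{general} fiber becomes a stable pointed curve. The paper instead uses Lemma~\ref{genusgoesup}: a finite Galois cover $Y\to X$ of the \emph{total space} (built from linear projections to $\mathbb{P}^{1}$ and high-genus Galois covers of $\mathbb{P}^{1}$) after which \emph{every} irreducible component of \emph{every} fiber has geometric genus at least two; the stable reduction is then performed $H$-invariantly for $Y\to B$ (Proposition~\ref{invstablered}) and one quotients by $H$ at the end (Lemma~\ref{prestablequot}). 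This is not cosmetic. The theorem requires $X'\to X\times_{B}B'$ to be a projective modification onto the main component, but the de Jong--Oort extension produces the family over the bad codimension-two locus only abstractly, as $\mathrm{Projan}$ of a pushed-forward pluricanonical algebra, with no a priori map to $X\times_{B}B'$ there. The paper recovers that map by Lemma~\ref{meroextn} (Remmert--Stein) followed by Lemma~\ref{morextn}, and the proof of Lemma~\ref{morextn} hinges on every fiber component of the target family having positive geometric genus: a positive-dimensional fiber of the graph projection would be simultaneously a curve of positive genus (it lies in a fiber of $f$) and an exceptional curve over a surface with rational singularities, hence rational --- a contradiction. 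With your marked-point stabilization, rational components in fibers survive, this rigidity argument is unavailable, and your sketch never explains how the extended family maps back to $X\times_{B}B'$. That extension-of-morphism step is precisely what the genus-raising cover exists to enable.

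Two smaller omissions: turning the horizontal part of $Z$ (and any auxiliary horizontal divisor) into honest disjoint sections requires a preliminary alteration of the base (normalized base change along resolutions of the horizontal components together with Lemma~\ref{Galalt}), which you do not mention; and in the codimension-one step the paper first blows up the singular loci of $g^{-1}(D)_{\mathrm{red}}$ so that the reduced fibers are prestable away from a codimension-two subset of the base \emph{before} taking the least common multiple of the multiplicities as ramification data for Lemma~\ref{Kawamatacov}. Your ``unipotent monodromy'' prescription is morally the same choice of indices, but without that preparatory modification the normalized base change need not be prestable in codimension one.
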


To prove Theorem~\ref{semistredcurve}, taking the blowing up along $Z$, we may assume that $Z$ is an effective Cartier divisor.
Taking the flattening of $f$ (\cite{Hir}) and the base change by a resolution of $B$, 
we may also assume that $f$ is flat and $B$ is smooth.

\begin{lem}[Genus goes up] \label{genusgoesup}
Let $f\colon X\to B$ be a projective surjective equidimensional morphism between normal analytic varieties 
whose fibers are connected curves.
Assume that $B$ is projective over a Stein space $S$ and fix any Stein compact subset $W$ of $S$.
Let $n$ be any natural number.
Then after shrinking $S$ around $W$, there exists a finite Galois covering $\pi\colon Y\to X$ from a normal analytic variety $Y$ such that any irreducible $f\circ \pi$-vertical curve has geometric genus greater than $n$.
\end{lem}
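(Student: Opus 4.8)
Let me plan a proof.

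The plan is to realize the genus bound by forcing enough ramification along the fibers through a single auxiliary finite covering, and then passing to a Galois closure. The key geometric input is the Riemann--Hurwitz formula: if $C'\to C$ is a finite covering of smooth projective curves of degree $d$ ramified at $b$ points (counted with multiplicity), then $2g(C')-2 = d(2g(C)-2)+b$, so even when $g(C)=0$ the genus of $C'$ grows linearly in the total ramification. Thus it suffices to produce a finite covering $Y\to X$ whose restriction to each vertical curve is ramified at many points. The cleanest way to manufacture such ramification uniformly over the base is to cut out divisors on $X$ that meet every fiber transversally in a controlled number of points and then take cyclic (Kawamata-style) coverings branched along them.

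Here is the order of steps I would carry out. First I would reduce to a clean geometric setup: after shrinking $S$ around $W$, use Lemma~\ref{projemb} to embed $X\hookrightarrow \mathbb{P}^{N}\times B$ over $B$, so that a relatively very ample sheaf $\mathcal{O}(1)$ is available on $X$ over $B$. Second, using the compactness of $\pi^{-1}(W)$ together with Bertini's theorem (as in the proof of Lemma~\ref{Kawamatacov}), I would choose general members $H_{1},\ldots,H_{s}$ of a sufficiently positive relatively very ample linear system such that each $H_{j}$ restricts to a reduced divisor on the general fiber and, crucially, the union $\sum_j H_j$ meets every fiber in a large number of distinct smooth points. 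Here the degree of $\mathcal{O}(1)$ on a fiber controls the number of intersection points, so taking $s$ large (and the ample sheaf positive enough) makes the total ramification arbitrarily large. Third, I would apply a cyclic covering construction branched along these divisors: choosing $m$ with $m\mid \deg$ appropriately, form the degree-$m$ cyclic covers branched along the $H_j$ and take the normalized fiber product, yielding a finite covering $\pi_0\colon Y_0\to X$ (a composition of Kawamata-type covers, cf.\ Lemma~\ref{Kawamatacov}) which is ramified over each $H_j$. Fourth, I would compute via Riemann--Hurwitz on the normalization of a general vertical curve: the restriction $\pi_0$ to such a curve is ramified at all the preimages of the $H_j$-points, so the geometric genus of every component of a fiber of $f\circ\pi_0$ exceeds $n$ once $s$ (equivalently the ramification count) is chosen large enough relative to $n$ and the fixed fiber degree.

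Finally, to upgrade $\pi_0$ to a \emph{Galois} covering I would invoke Lemma~\ref{Galclo}: take the Galois closure $\pi\colon Y\to X$ of $\pi_0$. Since $Y\to X$ factors through $Y_0\to X$, each vertical curve in $Y$ dominates a vertical curve in $Y_0$, and genus cannot drop under a finite surjection of curves (again Riemann--Hurwitz, as $2g-2$ is monotone under dominant maps of positive degree), so the genus bound is preserved. I expect the main obstacle to be the uniformity of the construction over the (possibly singular, only equidimensional) morphism $f$: I must ensure that the chosen divisors $H_j$ genuinely produce ramification on \emph{every} irreducible vertical curve, including components of special fibers, not merely on general fibers. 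This requires controlling the special fibers carefully—most conveniently by shrinking $S$ around $W$ so that only finitely many components of fibers are relevant, and by choosing the $H_j$ general enough (using compactness of $\pi^{-1}(W)$) to avoid the singular loci and to meet each such component in enough points. Once this uniform transversality is secured, the Riemann--Hurwitz bookkeeping is routine.
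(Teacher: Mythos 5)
Your route is genuinely different from the paper's. You force the genus up by \emph{ramification}: branched cyclic (Kawamata-type) covers along relatively ample divisors $H_j$, plus Riemann--Hurwitz bookkeeping on each vertical curve. The paper instead forces the genus up by \emph{mapping onto a high-genus curve}: it uses the embedding $X\hookrightarrow\mathbb{P}^N\times B$ and linear projections from codimension-$2$ planes to produce, for every $b\in B$, a finite morphism $X_b\to\mathbb{P}^1$ (Lemma~\ref{linprojfin}), pulls back a fixed Galois cover $C_i\to\mathbb{P}^1$ with $g(C_i)>n$, and concludes because every component of every fiber of the resulting cover maps finitely onto some $C_i$ and geometric genus cannot drop under a finite surjection. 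The paper's mechanism needs no transversality or ramification count whatsoever --- finiteness of the projection on each fiber is the only condition --- and that condition is secured uniformly by a dimension induction on the bad locus in $B$.

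The genuine gap in your argument is exactly the uniformity step you flag but do not resolve, and your proposed fix is incorrect: shrinking $S$ around $W$ does \emph{not} reduce to ``finitely many components of fibers.'' Even after shrinking, $B$ is positive-dimensional, so the irreducible vertical curves form an uncountable family; only objects like the irreducible components of a fixed divisor become finite in number. A general $H_j$ is transverse to the components of the \emph{generic} fiber, but over a proper analytic subset of $B$ it may be tangent to, meet at a single point, or even contain a fiber component $C$; in those cases the cyclic cover restricted to $C$ produces no genus (for instance, the normalization of a degree-$m$ cyclic cover of a rational curve branched along a divisor supported at one point is a union of rational curves, and a curve contained in the branch divisor lifts isomorphically). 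To make your approach work you would need a stratification/induction statement exactly parallel to Lemma~\ref{linprojfin} --- finitely many tuples of divisors such that for every $b$ at least one tuple meets every component of $X_b$ in enough distinct points --- followed by taking the normalized fiber product of the corresponding covers and using monotonicity of genus under finite maps to combine them. As written, the key step ``every irreducible vertical curve acquires enough branch points'' would fail for special fibers, so the proof is incomplete without that additional argument.
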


\begin{proof}
By Lemma~\ref{projemb}, there exists a closed immersion $X\hookrightarrow \mathbb{P}^{N}\times B$ over $B$ for some $N$ after shrinking $S$ around $W$.
Now we consider linear projections $\mathbb{P}^{N}\dasharrow \mathbb{P}^{1}$.
For each codimension $2$ plane $P$ in $\mathbb{P}^N$, we denote by $\pi_{P}\colon \mathbb{P}^N\dasharrow \mathbb{P}^1$ the linear projection from $P$.
Let $\varphi_{P}\colon X\dasharrow \mathbb{P}^1\times B$ denote the composition $X\hookrightarrow \mathbb{P}^N\times B\overset{\pi_{P}\times \mathrm{id}}{\dashrightarrow} \mathbb{P}^1\times B$.
It is a meromorphic map whose indeterminacy locus is equal to $X\cap (P\times B)$.
We use the following lemma:

\begin{lem} \label{linprojfin}
Let $f\colon X\to B$ be a projective surjective equidimensional morphism between analytic spaces
whose fibers are connected curves.
Assume that $d:=\dim B$ is finite and there exists a closed immersion $X\hookrightarrow \mathbb{P}^{N}\times B$ over $B$ for some $N>1$.
Then there exist finitely many planes $P_{1},\ldots,P_{d}$ in $\mathbb{P}^N$ of codimension $2$ such that the following holds:
For any $b\in B$, there exists $i=1,\ldots,d$ such that the restriction of the linear projection from $P_{i}$ to the fiber $X_{b}$ defines a finite morphism $\varphi_{P_i}|_{X_{b}}\colon X_{b}\to \mathbb{P}^1$.
\end{lem}

\begin{proof}
We show the claim by induction on $d=\dim B$.
When $\dim B=0$, the claim is trivial.
Assume that $B$ is positive dimensional.
Let $\mathcal{H}$ denote the analytic subspace of $B\times \mathrm{Gr}(N-2, \mathbb{P}^{N})$ consisting of points $(b, P)$ such that $\varphi_{P}|_{X_{b}}$ is not a finite morphism.
Note that the projection $\mathrm{pr}_{1}\colon \mathcal{H}\to B$ is proper since the Grassmannian $\mathrm{Gr}(N-2, \mathbb{P}^{N})$ is projective.
Hence $B_{1}:=\mathrm{pr}_{1}(\mathrm{pr}_{2}^{-1}(P_1))$ is an analytic subspace of $B$ for any $P_{1}\in \mathrm{Gr}(N-2, \mathbb{P}^{N})$.
By taking $P_{1}$ very general, we may assume that $\dim B_{1}<\dim B$.
By using the inductive assumption for $f^{-1}(B_{1})\to B_{1}$, there exist $P_{2}\ldots,P_{d}\in \mathrm{Gr}(N-2, \mathbb{P}^{N})$ such that for any $b\in B_{1}$, the restriction $\varphi_{P_{i}}|_{X_{b}}\colon X_{b}\to \mathbb{P}^1$ is finite for some $i=2,\ldots, d$.
For any $b\in B\setminus B_{1}$, the restriction $\varphi_{P_{1}}|_{X_{b}}\colon X_{b}\to \mathbb{P}^1$ is finite by the definition of $\mathcal{H}$.
\end{proof}

{\it Proof of Lemma \ref{genusgoesup} continued.}
Let $P_{1},\ldots, P_{d}$ be as in Lemma~\ref{linprojfin}.
For each $i$, we take a resolution of indeterminacy $\widetilde{\varphi}_{P_{i}}\colon \widetilde{X}_{i}\to \mathbb{P}^{1}\times B$ of $\varphi_{P_{i}}$ and a Galois covering $\pi_{i}\colon C_{i}\to \mathbb{P}^{1}$ with Galois group $G_{i}$ from a smooth curve $C_{i}$ of genus greater than $n$.
Let $\widetilde{Y}_{i}\to \widetilde{X}_{i}$ denote the $G_{i}$-covering defined as the normalized base change of $\pi_{i}$ by the composition $\mathrm{pr}_{1}\circ \widetilde{\varphi}_{P_{i}}\colon \widetilde{X}_{i} \to \mathbb{P}^1$
and take the Stein factorization $\widetilde{Y}_{i}\to Y_{i}\to X$ of the composition $\widetilde{Y}_{i}\to \widetilde{X}_{i}\to X$.
Let $Y$ be the normalization of the fiber product $Y_{1}\times_{X}\cdots\times_{X}Y_{d}$ and put $H:=G_{1}\times\cdots \times G_{d}$.
Then the projection $\pi\colon Y\to X$ is an $H$-covering and the composition $f\circ \pi\colon Y\to B$ satisfies the desired property.
Indeed, for any irreducible component $C$ in a fiber $Y_{b}$, we can find the finite morphisms 
$$
C\to Y_{i,b}=X_{b}\times_{\mathbb{P}^{1}}C_{i} \to C_{i}
$$
by taking $i$ such that $\varphi_{P_{i}}|_{X_{b}}\colon X_{b}\to \mathbb{P}^{1}$ is finite.
Hence the geometric genus of $C$ is greater than $n$ since so is $C_{i}$.
\end{proof}

Let $\pi\colon Y\to X$ be a Galois covering as in Lemma~\ref{genusgoesup} with $n=1$ and $H$ its Galois group.
We may assume that $S$ remains Stein since $W$ is Stein compact.
Let $Z_{Y}$ denote the pullback on $Y$ of the effective Cartier divisor $Z$.
Then we can take an invariant stable reduction of $Y\to X\to B$ as follows:

\begin{prop}[Invariant stable reduction] \label{invstablered}
Let $g\colon Y\to B$ be a projective equidimensional morphism between normal analytic varieties whose fibers are connected and consist of curves with geometric genus greater than one.
Let $Z_Y$ be an effective Cartier divisor on $Y$.
Assume that $B$ is projective over a Stein space $S$ and fix any Stein compact subset $W$ of $S$. 
Further assume that a finite group $H$ acts on $Y$ such that $g$ and $Z_{Y}$ are $H$-invariant.
Then after shrinking $S$ around $W$, there exist a projective alteration $B'\to B$ from a smooth variety $B'$, 
a projective modification $Y'\to Y\times_{B} B'$ from a normal variety $Y'$ onto the main component of $Y\times_{B} B'$ and finite disjoint sections $\tau_{1},\ldots, \tau_{s}$ of the projection $g'\colon Y'\to B'$ such that the following hold:

\smallskip

\noindent
(i) The proper transform on $Y'$ of the $g$-horizontal part of $Z_Y$ is the union of $\tau_{1},\ldots, \tau_{s}$.

\smallskip

\noindent
(ii) $g'\colon (Y', \tau_{1},\ldots,\tau_{s})\to B'$ is an $s$-pointed stable curve over $B'$. 

\smallskip

\noindent
(iii) $B'\to B$ is a Galois alteration with Galois group $G$ and $G$ acts on $Y'$ faithfully such that $Y'\to Y$ is a $G$-alteration, $g'$ is $G$-equivariant and the action permutes the sections $\{\tau_{i}\}_{i=1}^{s}$.

\smallskip

\noindent
(iv) $H$ acts on $Y'$ such that $Y'\to Y$ is $H$-equivariant, $g'$ is $H$-invariant and the action permutes the sections $\{\tau_{i}\}_{i=1}^{s}$.
\end{prop}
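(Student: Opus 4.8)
The plan is to follow de Jong's strategy for the stable reduction of a family of curves, but, as explained in the introduction, to avoid the moduli space of curves and the resolution of indeterminacy of a moduli map (which may fail in the analytic category): instead I would construct the stable model directly in codimension one and then extend it over the codimension-two locus by the de Jong--Oort extension theorem \cite{dJOo}. The extra difficulty here is that two group actions must be carried along the whole construction --- the Galois group $G$ of the alteration $B'\to B$ and the given fiberwise action of $H$ --- and this forces every intermediate construction to be performed canonically.

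I would begin with the standard reductions. Blowing up along $Z_Y$ we may assume $Z_Y$ is an effective Cartier divisor; flattening $g$ by \cite{Hir} and passing to a resolution of $B$ (which can be taken $G$- and $H$-equivariantly by Lemmas~\ref{equivresol} and \ref{Galalt}, since $H$ acts over $B$) we may assume $g$ is flat and $B$ is smooth. Next I would turn the horizontal part of $Z_Y$ into genuine sections: the $g$-horizontal part $Z_Y^{h}$ is finite over the generic point of $B$, so after a Galois alteration $B_1\to B$ produced by Lemma~\ref{Galalt} the normalized pullback of $Z_Y^{h}$ splits as a disjoint union of sections, and a further resolution makes $B_1$ smooth and the discriminant locus $\Delta\subseteq B_1$ --- the locus over which a fiber degenerates or a section collides with the singular set --- a simple normal crossing divisor.

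The core step is the semistable reduction in codimension one. At the generic point of each component of $\Delta$ the local ring is a one-dimensional valuation ring, and classical stable reduction for one-parameter families of pointed curves of genus $>1$ (\cite{KKMS}) supplies a stable model after a suitable ramified base change. I would realize the required ramification globally by Kawamata's covering trick (Lemma~\ref{Kawamatacov}), which after shrinking $S$ around $W$ yields a finite abelian cover $B'\to B_1$ from a smooth variety with the prescribed ramification indices along the components of $\Delta$. Normalizing the pullback of $Y$ and forming the relative stable model over the main component then produces, over an open subset $U\subseteq B'$ whose complement has codimension $\ge 2$, an $s$-pointed stable curve with sections given by the proper transforms of $Z_Y^{h}$. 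Since stable reduction is unique, this family is canonical. Finally I would apply the de Jong--Oort extension theorem \cite{dJOo} to extend it over the codimension-$\ge 2$ locus to a family $g'\colon (Y',\tau_1,\ldots,\tau_s)\to B'$ of stable pointed curves over all of $B'$; together with the preceding constructions this gives (i) and (ii).

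For (iii) and (iv) I would compose the alterations $B'\to B_1\to B$ into one projective alteration and replace it by its Galois closure via Lemma~\ref{Galalt}, producing the Galois group $G$; the uniqueness of stable reduction and of the de Jong--Oort extension then force both $G$ and the given $H$ (which acts over $B$ and hence on every fiberwise construction) to act on $Y'$ compatibly with $g'$ and to permute the sections $\{\tau_i\}$. I expect the main obstacle to be the codimension-one step combined with the analytic bookkeeping: one must control the ramification simultaneously along all components of the simple normal crossing discriminant --- which before shrinking may be infinite, so the reduction to finitely many components via shrinking $S$ around $W$ is essential --- and one must check that the hypotheses of the de Jong--Oort theorem (normality of $B'$, codimension $\ge 2$ of the removed locus, and the nodal degeneration of the family near it) genuinely hold in the complex analytic setting, ensuring that the extension exists, is unique, and is therefore $G$- and $H$-equivariant.
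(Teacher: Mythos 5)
Your overall strategy coincides with the paper's: turn the horizontal part of $Z_Y$ into sections by a Galois alteration, make the discriminant simple normal crossing, use Kawamata's covering trick (Lemma~\ref{Kawamatacov}) to obtain a prestable pointed family over an open $U\subseteq B'$ with codimension-$\ge 2$ complement, pass to the relative stable model over $U$, extend it over $B'$ by de Jong--Oort, and recover the group actions from the canonicity of the stable model. However, there is one genuine gap: you never establish that the extended family $Y'$ admits a morphism to $Y\times_B B'$, i.e.\ that $Y'\to Y\times_B B'$ is a projective modification onto the main component. The de Jong--Oort theorem produces $Y'$ only as an abstract stable curve over $B'$ (as $\mathrm{Projan}$ of the pushforward from $U$ of the relative pluricanonical ring); over $B'\setminus U$ there is a priori only a meromorphic comparison with $Y\times_B B'$, and in the analytic category meromorphic maps need not extend to morphisms. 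This is not a formality --- it is exactly where the genus $>1$ hypothesis is used a second time. The paper handles it in two steps: Lemma~\ref{meroextn} (Remmert--Stein) extends the identification over $U$ to a meromorphic map $Y'\dashrightarrow Y\times_B B'$, and Lemma~\ref{morextn} shows this map is a morphism, since a positive-dimensional fiber of the graph projection would be simultaneously a rational curve (exceptional over a prestable surface, which has rational singularities) and a curve of positive geometric genus. Without this morphism, conclusion (i), the phrase ``projective modification onto the main component,'' and the assertion in (iii) that $Y'\to Y$ is a $G$-alteration are all unproved; moreover the paper's mechanism for the group actions runs through it, by taking a $(G,H)$-equivariant resolution of $Y\times_B B'$ factoring through $Y'$ (Lemma~\ref{equivresol}) and exhibiting $Y'$ as its relative canonical model, which makes the actions honest group actions rather than a family of isomorphisms supplied by ``uniqueness of stable reduction.''

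A smaller omission: before applying the covering trick one must first modify $Y$ itself (the paper blows up the singular loci of $g^{-1}(D)_{\mathrm{red}}$ and of the union of the sections over the generic points of $D$) so that the \emph{reduced} fibers over a codimension-one open set are already nodal with sections through smooth points; only then does the abelian cover with ramification equal to the lcm of the multiplicities make the normalized pullback prestable. Quoting the one-parameter stable reduction of \cite{KKMS} at the generic points of $\Delta$ guarantees that suitable ramification indices exist, but does not by itself produce the global prestable family over $U$.
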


\begin{proof}
By taking normalized base change from a resolution of each horizontal component of $Z_{Y}$ to $B$ and applying Lemma~\ref{Galalt},
we may assume that $B$ is smooth and the horizontal part of $Z_{Y}$ is the sum of some sections $\sigma_{1},\ldots, \sigma_{s}$ of $g$.
Let $D$ be the minimal analytic subset of $B$ such that the restriction of $g\colon (Y, \sigma_{1},\ldots, \sigma_{s})\to B$ over $B\setminus D$ is a family of $s$-pointed smooth curves.
By taking a further modification of $B$, we may also assume that $D$ is simple normal crossings.

Taking successive blowing-ups of $Y$ along irreducible components of the singular loci of $g^{-1}(D)_{\mathrm{red}}$ and $\cup_{i=1}^{s}\sigma_{i}$ dominating an irreducible component of $D$, 
We obtain a projective modification $\widetilde{Y}\to Y$ such that the following property holds:
There exist a Zariski open subspace $V\subseteq B$ the complement of which has codimension at least $2$ and the sections $\widetilde{\sigma}_{1},\ldots, \widetilde{\sigma}_{s}$ of the composition $\widetilde{g}\colon \widetilde{Y}\to Y\to B$ lifting of $\sigma_{1},\ldots,\sigma_{s}$ such that the tuple $(\widetilde{g}^{-1}(p)_{\mathrm{red}}, \widetilde{\sigma}_{1}(p),\ldots, \widetilde{\sigma}_{s}(p))$ is an $s$-pointed prestable curve for each $p\in V$.
For an irreducible components $D_{\lambda}$ of $D$, let $m_{\lambda}$ denote the least common multiple of the geometric multiplicities of components of $\widetilde{g}^{-1}(D_{\lambda}\cap V)$. 
By using Lemma~\ref{Kawamatacov}, after shrinking $S$ around $W$, we can take a finite abelian covering $\pi \colon B_{1}\to B$ from a smooth variety $B_{1}$ branched along at most $D+A$ for some general ample divisor $A$ satisfying the following property:
Let $Y_{1}$ denote the normalization of the main component of $\widetilde{Y}\times_{B}B_{1}$ with the projection $g_{1}\colon Y_{1}\to B_{1}$ and $U:=\pi^{-1}(V)$.
Then there exist sections $\sigma_{U, 1},\ldots, \sigma_{U, s}$ of $g_{1}$ over $U$ which are lifts of $\widetilde{\sigma}_{1},\ldots, \widetilde{\sigma}_{s}$ such that
$(Y_{1, U}:=g_{1}^{-1}(U), \sigma_{U, 1},\ldots, \sigma_{U, s})$ is an $s$-pointed prestable curve over $U$.
Let $g'_{U}\colon (Y'_{1, U}, \sigma'_{U, 1},\ldots, \sigma'_{U, s})\to U$ denote the stable model of $g_{U}:=g_{1}|_{Y_{1,U}}\colon (Y_{1, U}, \sigma_{U, 1},\ldots, \sigma_{U, s})\to U$ with the contraction
$$
Y_{1, U}\to Y'_{1, U}:=\mathrm{Projan}_{B_{1}}\bigoplus_{n\ge 0}g_{U*}\omega_{g_{U}}^{\otimes n}(n\sum_{i=1}^{s}\sigma_{U, i}).
$$ 
Since the complement $B_{1}\setminus U$ has codimension at least $2$ and the discriminant locus of $g_{1}$ is simple normal crossings, it follows from {\cite[Corollary~5.5]{dJOo}} (applying to each local ring of $B_{1}$) that 
the $s$-pointed stable curve $(Y'_{1, U}, \sigma'_{U, 1},\ldots, \sigma'_{U, s})$ over $U$ extends to an $s$-pointed stable curve $g'_{1}\colon (Y'_{1}, \sigma'_{1},\ldots, \sigma'_{s})\to B_{1}$, where $Y'_{1}$ is realized as
$$
Y'_{1}=\mathrm{Projan}_{B_{1}}\bigoplus_{n\ge 0}i_{U*}g_{U*}\omega_{g_{U}}^{\otimes n}(n\sum_{i=1}^{s}\sigma_{U, i})\cong \mathrm{Projan}_{B_{1}}\bigoplus_{n\ge 0}i_{U*}g'_{U*}\omega_{g'_{U}}^{\otimes n}(n\sum_{i=1}^{s}\sigma'_{U, i}),
$$
where $i_{U}\colon U\hookrightarrow B_{1}$ is the inclusion.

Next we show that the map
$$
\varphi\colon Y'_{1}\hookleftarrow Y'_{1, U}\leftarrow Y_{1, U}\to Y\times_{B}U\hookrightarrow Y\times_{B}B_{1}
$$
extends to a bimeromorphic morphism $Y'_{1}\to Y\times_{B}B_{1}$.
To prove this, we use the following two lemmas:

\begin{lem} \label{morextn}
Let $f\colon X\to B$ and $f'\colon X'\to B$ be proper surjective equidimensional morphisms between normal analytic varieties whose fibers are connected curves.
Assume that $f'$ is prestable and any component of any fiber of $f$ has positive geometric genus.
Then any meromorphic map $X'\dasharrow X$ over $B$ is a morphism.
\end{lem}

\begin{proof}
Let $\varphi\colon X'\dasharrow X$ be a meromorphic map.
Then the closure $\overline{\Gamma}\subseteq X'\times_{B}X$ of the graph of $\varphi$ is an analytic subset and $\varphi$ can be written as
$$
\varphi\colon X'\xleftarrow{\rho'} \overline{\Gamma}\xrightarrow{\rho} X,
$$
where the first projection $\rho'$ is a bimeromorphic morphism.
To prove the claim, it is sufficient to show that $\rho'$ is an isomorphism.
By Zariski's main theorem and the normality of $X'$, it suffices to show that any fiber of $\rho'$ is $0$-dimensional. 
For any $x\in X'$, the fiber $\rho'^{-1}(x)$ is contained in $x\times_{B}X\cong f^{-1}(f'(x))$.
Hence each positive dimensional component of $\rho'^{-1}(x)$ is a curve of positive geometric genus.
On the other hand, taking a normalization $C\to B$ of a general curve on $B$ passing through $f'(x)$ and the base change $\overline{\Gamma}_{C}\to X'_{C}\to C$ of the composition $\overline{\Gamma}\to X'\to B$,
the surface $X'_{C}$ has at most rational singularities since $f'$ is prestable. 
Since any positive dimensional component of $\rho'^{-1}(x)$ is exceptional over $X'_{C}$, it is a rational curve.
Hence $\rho'^{-1}(x)$ has no positive dimensional components.
\end{proof}

\begin{lem} \label{meroextn}
Let $f\colon X\to B$ and $f'\colon X'\to B$ be proper surjective equidimensional morphisms between analytic varieties whose fibers are connected curves.
Let $U\subseteq B$ be a Zariski open subset whose complement $B\setminus U$ has codimension at least $2$.
Then any morphism $\varphi_{U}\colon f^{-1}(U)\to f'^{-1}(U)$ over $U$ extends to a meromorphic map $\varphi\colon X\dasharrow X'$ over $B$.
\end{lem}

\begin{proof}
It suffices to show that the closure $\overline{\Gamma}$ in $X'\times_{B} X$ of the graph $\Gamma\subseteq X'\times_{B}X\times_{B}U$ of $\varphi_{U}$ is an analytic subset.
First note that the complement $X'\times_{B}X \setminus (X'\times_{B}X\times_{B}U)$ has codimension at least $2$ since so is the complement $B\setminus U$.
Hence the dimension of $X'\times_{B}X\setminus (X'\times_{B}X\times_{B}U)$ is less than $n:=\dim X=\dim \Gamma$.
Thus the claim follows from Remmert-Stein's theorem {\cite[Theorem~4.6]{Shi}}.
\end{proof}

{\it Proof of Proposition \ref{invstablered} continued.} 
First note that the restriction $\varphi_{U}\colon Y'_{1, U} \dasharrow Y\times_{B}U$ over $U$ is a bimeromorphic map.
Applying Lemma~\ref{morextn}, $\varphi_{U}$ is a morphism.
From Lemma~\ref{meroextn}, 
$\varphi_{U}$ extends to a bimeromorphic map $\varphi\colon Y'_{1} \dasharrow Y\times_{B}B_{1}$.
Applying Lemma~\ref{morextn} again, $\varphi$ becomes a morphism.

Let $G$ denote the Galois group of the Galois alteration $B_{1}\to B$.
Then both $G$ and $H$ act on $Y\times_{B}B_{1}$ naturally.
Now we show that $G$ and $H$ also act on $Y'$.
To prove this, we take a resolution $\widetilde{Y}_{1}\to Y\times_{B}B_{1}$ which is $(G, H)$-equivariant and factors through $\varphi\colon Y'_{1}\to Y\times_{B}B_{1}$ by Lemma~\ref{equivresol}, denoted by $\rho \colon \widetilde{Y}_{1}\to Y'_{1}$.
Let $\widetilde{Z}$ and $Z'$ be the pullbacks of the horizontal part of $Z_{Y}$ to $\widetilde{Y}_{1}$ and $Y'_{1}$, respectively.
By construction, $\widetilde{Z}$ is $(G, H)$-invariant and $Z'$ coincides with the disjoint union of $\sigma'_{1},\ldots, \sigma'_{s}$.
Since $\widetilde{Z}$ is the pullback of $Z'$ and $Y'_{1}$ has at most rational singularities,
we have $\rho_{*}\omega_{\widetilde{g}_{1}}^{\otimes n}(n\widetilde{Z})\cong \omega_{g'_{1}}^{\otimes n}(nZ')$ for any $n$,
where we put $\widetilde{g}_{1}:=g'_{1}\circ \rho \colon \widetilde{Y}_{1}\to B_{1}$.
Hence we have
$$
Y'_{1}\cong \mathrm{Projan}_{B_1}\bigoplus_{n\ge 0}g'_{1*}\omega_{g'_{1}}^{\otimes n}(nZ')\cong \mathrm{Projan}_{B_1}\bigoplus_{n\ge 0}\widetilde{g}_{1*}\omega_{\widetilde{g}_{1}}^{\otimes n}(n\widetilde{Z}).
$$
In particular, $g'_{1}\colon (Y'_{1}, \sigma'_{1},\ldots,\sigma'_{s})\to B_{1}$ is the relative canonical model of $\widetilde{g}_{1}\colon (\widetilde{Y}_{1}, \widetilde{Z})\to B_{1}$.
Thus $G$ and $H$ acts on $Y'_{1}$ which permutes the sections $\{\sigma'_{i}\}_{i=1}^{s}$ such that $\rho$ is $(G, H)$-equivariant and $g'_{1}$ is $G$-equivariant and $H$-invariant.
Letting $g':=g'_{1}$, $Y':=Y'_{1}$, $B':=B_{1}$ and $\tau_{i}:=\sigma'_{i}$, we conclude the proof of Proposition~\ref{invstablered}.
\end{proof}

Now we are ready to prove Theorem~\ref{semistredcurve}.

\begin{proof}[Proof of Theorem~\ref{semistredcurve}]
Let $B'\to B$ and $g'\colon (Y', \tau_{1},\ldots \tau_{s})\to B'$ be the $G$-alteration and the $s$-pointed stable curve over $B'$ obtained by applying Proposition~\ref{invstablered} to $g:=f\circ \pi\colon Y\to X\to B$, respectively.
Then the quotient $X':=Y'/H$ and the sections $\{\sigma_{j}\}_{j=1}^{r}:=\{\tau_{i}\}_{i=1}^{s}/H$ define an $r$-pointed prestable curve $f'\colon (X', \sigma_{1},\ldots, \sigma_{r})\to B$ by Lemma~\ref{prestablequot}.
Since the actions of $G$ and $H$ on $Y'$ are commute, $G$ also acts on $X'$ such that $f'$ is $G$-equivariant, $X'\to X$ is a $G$-alteration and the action permutes $\{\sigma_{i}\}_{i=1}^{r}$.
By construction, the proper transform on $X'$ of the horizontal part of $Z$ coincides with the disjoint union of $\sigma_{1}\ldots, \sigma_{r}$.
Hence we conclude the proof of Theorem~\ref{semistredcurve}.
\end{proof}

\section{Proof of main theorem} \label{sec4}

\subsection{Toroidal reduction}

In this subsection, we prove the toroidal reduction theorem, which is a complex analytic version of {\cite[Theorem~2.1]{AbKa}}.

\begin{thm}[Toroidal reduction] \label{toroidalred}
Let $f\colon X\to B$ be a projective surjective morphism between analytic varieties.
Let $Z$ be a proper analytic subset of $X$.
Assume that $B$ is projective over a Stein space $S$ and fix any Stein compact subset $W$ of $S$.
Then after shrinking $S$ around $W$, there exist projective resolutions $m_{B}\colon B'\to B$, $m_{X'}\colon X'\to X$, a morphism $f'\colon X'\to B'$ with
$m_{B}\circ f'=f\circ m_{X}$ and reduced divisors $\Delta_{B'}\subseteq B'$ and $\Delta_{X'}\subseteq X'$ such that the following holds.

\smallskip

\noindent
(i) $(B', \Delta_{B'})$ and $(X', \Delta_{X'})$ are strict toroidal pairs.

\smallskip

\noindent
(ii) $f'$ is a toroidal morphism.

\smallskip

\noindent
(iii) The union of $m_{X}^{-1}(Z)$ and the $m_{X}$-exceptional set forms a divisor with simple normal crossing support contained in $\Delta_{X'}$.
\end{thm}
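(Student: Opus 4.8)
The plan is to prove the theorem by induction on the relative dimension $d:=\dim X-\dim B$, taking Theorem~\ref{semistredcurve} as the base case $d=1$ and reducing the relative dimension by fibering $X$ into curves over an intermediate base. Throughout, ``after shrinking $S$ around $W$'' is understood as in Remark~\ref{shrink}, and I will freely enlarge $Z$: since a projective modification of a projective modification is again one, and since the conclusion for a larger analytic subset implies it for $Z$, it is harmless to add exceptional and discriminant loci to $Z$ as the argument proceeds. As preliminary reductions I would use Lemma~\ref{projemb} to obtain, after shrinking, a closed immersion $X\hookrightarrow \mathbb{P}^{N}\times B$ over $B$, flatten $f$ by \cite{Hir}, and resolve $B$, so that I may assume $X$ and $B$ smooth, $f$ flat and equidimensional, and $Z$ an effective divisor.

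For the base case $d=1$ I would feed $(f,Z)$ into Theorem~\ref{semistredcurve}. This produces, after shrinking, a Galois alteration $B_{1}\to B$ with group $G$ and an $r$-pointed prestable curve $g\colon (Y,\tau_{1},\ldots,\tau_{r})\to B_{1}$ carrying a $G$-action for which the horizontal part of $Z$ equals the union of the sections. Taking a $G$-equivariant resolution of $B_{1}$ (Lemma~\ref{equivresol}), I arrange $(B_{1},\Delta_{B_{1}})$ to be a non-singular toroidal pair with simple normal crossing discriminant on which $G$ acts toroidally, using the local linearizability of finite group actions. Then Lemma~\ref{prestabletoroidal}(1) makes $(Y,\Delta_{Y})$ toroidal with $g$ toroidal, Lemma~\ref{prestabletoroidal}(2) shows that the lifted $G$-action on the normalized blow-up of the singular locus of $g$ is pre-toroidal, and Lemma~\ref{torify} converts it into a toroidal $G$-action after blowing up the torific ideal. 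Because $B_{1}\to B$ and $Y\to X$ are $G$-alterations, the quotients $B':=B_{1}/G$ and $X':=Y/G$ are projective modifications of $B$ and $X$; being quotients of toroidal pairs by toroidal actions they are toroidal pairs, and $f':=g/G\colon X'\to B'$ is toroidal. A final toroidal resolution of $(X',\Delta_{X'})$ and $(B',\Delta_{B'})$ (Lemmas~\ref{subdivmodif} and~\ref{subdivtoroidal}) makes both smooth while keeping $f'$ toroidal, and the boundary data are chosen so that $m_{X}^{-1}(Z)$ together with the exceptional set is simple normal crossing inside $\Delta_{X'}$.

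For the inductive step $d\ge 2$, I would use the embedding $X\hookrightarrow \mathbb{P}^{N}\times B$ and a general linear projection $\mathbb{P}^{N}\dashrightarrow \mathbb{P}^{d-1}$ to obtain a dominant rational map $X\dashrightarrow Y\subseteq \mathbb{P}^{d-1}\times B$ over $B$ whose general fibers are curves; after Stein factorization I may take these fibers connected, so that $g\colon Y\to B$ has relative dimension $d-1$ and $X\dashrightarrow Y$ is generically a fibration into curves. Resolving the indeterminacy gives a modification $X_{1}\to X$ with a morphism $h\colon X_{1}\to Y$ of relative dimension one. I would then apply the induction hypothesis to $g\colon Y\to B$, with $Z_{Y}$ taken to be the image of $Z$ together with the locus over which $h$ degenerates and all exceptional loci, obtaining toroidal $g'\colon (Y',\Delta_{Y'})\to (B',\Delta_{B'})$ with $B'\to B$ and $Y'\to Y$ projective modifications. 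Finally I would apply the base case to the curve fibration $X_{1}\times_{Y}Y'\to Y'$ over the toroidal base $Y'$ and compose, recalling that a composition of toroidal morphisms is again toroidal (\cite[Corollary~1.6]{AbKa}), to produce the desired toroidal $X'\to B'$.

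The main obstacle is the compatibility in the inductive step. Applying the base case to the curve fibration modifies the intermediate base $Y'$ to some $Y''$, and one must ensure that $Y''\to B'$ remains toroidal, so that the composite $X'\to Y''\to B'$ is toroidal. I would control this by performing every base modification inside the toroidal category: replacing bare modifications of $Y'$ by toroidal ones that dominate them, lifting them compatibly through $g'$ by Lemma~\ref{subdivtoroidal}, and carrying out the alterations required by the curve case via the Cartesian lattice-alteration construction of Lemma~\ref{latticealt}, whose functoriality guarantees that pulling a base alteration back to the total space preserves toroidality. Throughout, Lemma~\ref{toroidalmorproperty} would be used to track how the boundary divisors and the equidimensionality of $f'$ behave, and the principal bookkeeping difficulty is to guarantee simultaneously that $m_{X}^{-1}(Z)$ and the exceptional set land inside $\Delta_{X'}$ as a simple normal crossing divisor, that all the modifications remain projective, and that everything is achieved after a single shrinking of $S$ around $W$.
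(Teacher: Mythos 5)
Your overall strategy---induction on the relative dimension, fibering $X$ into curves over an intermediate base via a linear projection, and invoking Theorem~\ref{semistredcurve} together with Abhyankar's lemma, Lemma~\ref{prestabletoroidal} and the torific ideal to toroidalize the curve fibration---is the same as the paper's. But the order in which you perform the two main steps is reversed, and this reversal is where the argument breaks. You first apply the inductive hypothesis to $Y\to B$ to obtain a toroidal $(Y',\Delta_{Y'})\to(B',\Delta_{B'})$, and only afterwards run the curve-level reduction on $X_1\times_Y Y'\to Y'$. Theorem~\ref{semistredcurve} (via Lemma~\ref{genusgoesup}, Proposition~\ref{invstablered}, Lemma~\ref{Galclo} and Kawamata's covering trick) produces a Galois \emph{alteration} $Y''\to Y'$ whose branch locus involves data that cannot be known before the curve reduction is carried out: the branch loci of the genus-increasing covers, the resolutions of the horizontal components of $Z$, and the general ample divisors from Lemma~\ref{Kawamatacov}. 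None of these are contained in $\Delta_{Y'}$, so $Y''\to Y'$ is not toroidal for the structure you fixed, and $Y''\to B'$ has no reason to remain toroidal. Your proposed repair via Lemmas~\ref{subdivtoroidal} and~\ref{latticealt} does not apply: the former only lifts \emph{subdivisions} (toroidal modifications), and the latter only realizes \emph{lattice alterations} of an already-given toroidal structure, whereas the alterations coming out of Theorem~\ref{semistredcurve} are neither.

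The paper resolves exactly this difficulty by inverting the order. It first applies Theorem~\ref{semistredcurve} to $g\colon X\to P$ (the Stein factorization of the linear projection), obtaining Galois alterations $P_1\to P$, $X_1\to X$ and a prestable curve $g_1\colon X_1\to P_1$; it then collects into $Z_P\subseteq P$ the loci over which $X_1$, $P_1$ and $\alpha^{-1}(Z)$ fail to be smooth, and \emph{only then} applies the inductive hypothesis to $h\colon P\to B$ with this $Z_P$. After that step $P_1\to P$ is \'etale outside $\Delta_{P}$, so Proposition~\ref{analAbh} makes $(P_1,\Delta_{P_1})$ toroidal with a toroidal $G$-action, and $g_1$ is smooth outside $\Delta_{P_1}$, so Lemma~\ref{prestabletoroidal} applies. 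This also repairs a second gap in your base case: you cannot make the $G$-action on $(B_1,\Delta_{B_1})$ toroidal merely by taking a $G$-equivariant resolution with simple normal crossing discriminant and appealing to local linearizability, since toroidality of an action requires the stabilizers to act through the torus of a local model; in the paper this is obtained precisely from Abhyankar's lemma applied to the covering of the already-toroidalized quotient, and on the total space from the torific-ideal blow-up. Note finally that the paper's induction bottoms out at relative dimension $0$ (generically finite maps), which is needed in any case to treat the $d=1$ case along these lines.
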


\begin{proof}
The proof is almost identical to the proof of {\cite[Theorem~2.1]{AbKa}} by replacing {\cite[Theorem~2.4]{deJ}} with Theorem~\ref{semistredcurve}.
For the readers' convenience, we prove this.

Let $n$ denote the relative dimension of $f$.
We prove the claim by the induction on $n$.
In the case where $n=0$, then $f$ is a projective alteration.
Replacing $\widetilde{X}$ by its projective modification, we may assume that $Z$ is an effective Cartier divisor.
From Lemma~\ref{Galalt}, We can take a Galois alteration $\widetilde{X}\to B$ which factors through $f$ and denote $G$ its Galois group.
Let $X_{1}$ denote the normalization of the base change $X\times_{B}\widetilde{X}/G$.
Then $X_{1}\to \widetilde{X}/G$ is finite since the quotient $\widetilde{X}\to \widetilde{X}/G$ factors through $X_{1}$.
We take a projective resolution $B'\to \widetilde{X}/G$ after shrinking $S$ around $W$ such that the normalized base change $\pi\colon X_{2}\to B'$ of $X_{1}\to \widetilde{X}/G$ is \'{e}tale over the complement of some simple normal crossing divisor $\Delta_{B'}$.
We may assume that the pullback of $Z$ by $X_{2}\to X$ is contained in the inverse image $\pi^{-1}(\Delta_{B'})$.
It follows from Proposition~\ref{analAbh} that the pair $(X_{2}, \Delta_{X_{2}}:=\pi^{-1}(\Delta_{B'})_{\mathrm{red}})$ is toroidal and $\pi$ is a toroidal morphism.
Taking a toroidal resolution $(X', \Delta_{X'})\to (X_{2}, \Delta_{X_{2}})$, the claim holds.

Suppose that the claim holds for the case of relative dimension $n-1$.
By Lemma~\ref{projemb}, We can take a closed immersion $X\subseteq \mathbb{P}^{N}\times B$ over $B$ for some $N$ after shrinking $S$ around $W$.
We consider a linear projection $\mathbb{P}^{N}\dasharrow \mathbb{P}^{n-1}$ from a general $(N-n)$-dimensional plane.
Replacing $X$ by the resolution of indeterminacy of the composition 
$$
X\hookrightarrow \mathbb{P}^{N}\times B\dasharrow \mathbb{P}^{n-1}\times B,
$$
we may assume that it defines a morphism $g'\colon X\to \mathbb{P}^{n-1}\times B$.
Let $X\to P\to \mathbb{P}^{n-1}\times B$ be its Stein factorization.
Applying Theorem~\ref{semistredcurve} to the morphism $g\colon X\to P$ and $Z$, after shrinking $S$ around $W$,
there exist Galois alterations $a\colon P_{1}\to P$, $\alpha\colon X_{1}\to X$ with Galois group $G$, a $G$-equivariant morphism $g_{1}\colon X_{1}\to P_{1}$ and disjoint sections $\sigma_{1},\ldots, \sigma_{r}$ of $g_{1}$ such that $a\circ g_{1}=g\circ \alpha$, the union of $\sigma_{1},\ldots,\sigma_{r}$ is the proper transform of the horizontal part of $Z$ and 
$$
g_{1}\colon (X_{1},\sigma_{1},\ldots,\sigma_{r})\to P_{1}
$$
is an $r$-pointed prestable curve.
Replacing $X$, $P$ and $Z$ by $X_{1}/G$, $P_{1}/G$ and $\alpha^{-1}(Z)/G$, we may assume that $X=X_{1}/G$ and $P=P_{1}/G$.
Let $Z_{P}\subseteq P$ denote the union of the loci over which $X_{1}$, $P_{1}$ or $\alpha^{-1}(Z)$ are not smooth.
By using the inductive assumption to the projection $h\colon P\to B$ and $Z_{P}$, there exist projective resolutions $m_{B}\colon B'\to B$, $m_{P}\colon P'\to P$, a morphism $h'\colon P'\to B'$ with $m_{B}\circ h'=h\circ m_{P}$ and reduced divisors $\Delta_{B'}\subseteq B'$ and $\Delta_{P'}\subseteq P'$ such that the condition (i), (ii) and (iii) in Theorem~\ref{toroidalred} hold.
Replacing $P$, $B$ by $P'$, $B'$ and $X$, $X_{1}$, $P_{1}$, $Z$, $\sigma_{i}$ by these pullbacks to $P'$, we may assume that $(B, \Delta_{B})$ and $(P, \Delta_{P})$ are non-singular strict toroidal pairs, $h\colon P\to B$ is a toroidal morphism and $X_{1}$, $P_{1}$ and $\alpha^{-1}(Z)$ are smooth over $P\setminus \Delta_{P}$.
Since the $G$-covering $a\colon P_{1}\to P$ is finite \'{e}tale over $P\setminus \Delta_{P}$ and $P_{1}$ is normal, the pair $(P_{1},\Delta_{P_{1}}:=a^{-1}(\Delta_{P})_{\mathrm{red}})$ is toroidal on which $G$ acts toroidally and $a$ is a toroidal morphism by Proposition~\ref{analAbh}.
Since $g_{1}\colon (X_{1},\sigma_{1},\ldots,\sigma_{r})\to P_{1}$ is an $r$-pointed prestable curve and smooth over $P_{1}\setminus \Delta_{P_{1}}$, the pair
$$
(X_{1},\Delta_{X_{1}}:=g_{1}^{-1}(\Delta_{P_{1}})+\sum_{i=1}^{r}\sigma_{i})
$$
is toroidal and $g_{1}$ is a toroidal morphism by Lemma~\ref{prestabletoroidal}~(1).
Let $\beta\colon X_{2}\to X_{1}$ denote the normalized blowing up along the singular locus of $g_{1}$
and $\Delta_{X_{2}}$ the inverse image of $\Delta_{X_{1}}$ by $\beta$.
Then $(X_{2},\Delta_{X_{2}})$ is also toroidal and the lifting action of $G$ on it is pre-toroidal by Lemma~\ref{prestabletoroidal}~(2).
Let $\gamma\colon X_{3}\to X_{2}$ denote the normalized blowing up along the torific ideal $I$ and $\Delta_{X_{3}}$ the inverse image of the union of $\Delta_{X_{2}}$ and the zero-locus of $I$ by $\gamma$.
Then $(X_{3}, \Delta_{X_{3}})$ is also toroidal and the lifting action of $G$ on it is toroidal by Lemma~\ref{torify}.
Let $X':=X_{3}/G$ and $\Delta_{X'}:=\Delta_{X_{3}}/G$.
Then $(X', \Delta_{X'})$ is toroidal and the quotient morphism $X'\to P=P_{1}/G$ is a toroidal morphism.
Finally, replacing $(X', \Delta_{X'})$ by its projective toroidal resolution (\cite[Chapter~III, Theorem~4.1]{KKMS}), we conclude the proof.
\end{proof}

\subsection{Equidimensional reduction}

\begin{lem} \label{equidimsubdiv}
Let $\varphi\colon \Sigma\to T$ be a morphism of rational conical polyhedral complexes.
Then there exists a projective resolution $T'\to T$ such that the induced morphism $\Sigma\times_{T}T'\to T'$ is equidimensional.
\end{lem}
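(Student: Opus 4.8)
The plan is to modify the base in two stages: first a subdivision $T_1\to T$ that refines all the image cones of $\varphi$, and then a projective resolution of $T_1$; equidimensionality will be automatic for the pullback over $T_1$ and preserved by the second stage. To begin, I record the local shape of $\varphi$. For a cone $\delta\in\Sigma$ let $\tau_\delta\in T$ be the minimal cone with $\varphi(\delta)\subseteq\tau_\delta$. Since the restriction of $\varphi$ to $\delta$ is the linear map $\varphi_\delta$ dual to the induced homomorphism of lattices, the image $\varphi(\delta)$ is a rational polyhedral subcone of $\tau_\delta$ and $\varphi|_\delta\colon\delta\to\varphi(\delta)$ is a surjective linear map. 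The sole obstruction to equidimensionality is that $\varphi(\delta)$ need not be a cone of $T$.

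The key step is to construct a subdivision $T_1\to T$ such that every image cone $\varphi(\delta)$, for $\delta\in\Sigma$, is a union of cones of $T_1$. I would build this cone by cone: on each $\tau\in T$ the finitely many subcones $\varphi(\delta)\subseteq\tau$ form a finite arrangement of rational cones, and I take their common refinement inside $\tau$. These refinements agree along common faces, because the family of image subcones is closed under passing to faces of the $\delta$, so they glue to a genuine subdivision $T_1$ of $T$ (here I use the standing finiteness coming from the shrinking assumption, so that only finitely many cones occur). Moreover $T_1\to T$ can be taken projective, the associated good function being assembled from the convex piecewise-linear functions cutting out the walls of the arrangement. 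Applying the existence of projective resolutions $\text{\cite[Chapter~III, Theorem~4.1]{KKMS}}$ to $T_1$ then yields a projective resolution $T'\to T_1$; the composite $T'\to T$ is again a projective resolution, and since $T'$ refines $T_1$ each image cone $\varphi(\delta)$ is still a union of cones of $T'$.

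It remains to verify that the induced morphism $\Sigma\times_T T'\to T'$ is equidimensional. By the explicit description of fiber products of polyhedral complexes, $|\Sigma\times_T T'|=|\Sigma|$ and the cones contained in a fixed $\delta\in\Sigma$ are exactly the sets $\delta'=(\varphi|_\delta)^{-1}(\tau')$ for cones $\tau'\subseteq\tau_\delta$ of $T'$. Because $\varphi|_\delta$ surjects onto $\varphi(\delta)$, we have $\varphi(\delta')=\varphi(\delta)\cap\tau'$. This set is convex, being an intersection of cones, and since $\varphi(\delta)$ is a union of cones of $T'$ it is also a union of faces of $\tau'$. A convex union of faces of a cone is a single face: a point of it lying in the relative interior of the smallest enclosing face forces that whole face to be contained in it, while it is contained in that face by minimality. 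Hence $\varphi(\delta')$ is a cone of $T'$, so the image of every cone of $\Sigma\times_T T'$ is a cone of $T'$, which is the required equidimensionality.

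The main obstacle is the middle step: producing one honest, rational, projective subdivision of the entire complex $T$ that simultaneously refines all the image cones $\varphi(\delta)$ in a face-compatible way. This is the combinatorial heart of the statement, the analogue in our setting of the equidimensionalization argument of $\text{\cite{AbKa}}$; once it is in place, the passage to a projective resolution and the verification in the previous paragraph are formal.
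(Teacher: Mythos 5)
Your argument is correct and is essentially the one the paper invokes: the paper's proof simply cites the proofs of {\cite[Lemma~4.3]{AbKa}} (a projective subdivision of $T$ making every image $\varphi(\delta)$ a union of cones, obtained from the wall arrangement with its associated good function) and {\cite[Proposition~4.4]{AbKa}} (equidimensionality of the pullback, via the fact that a convex union of faces of a cone is a single face), which are exactly your two steps. The one place you leave at the level of a sketch --- the face-compatible, rational, projective common refinement --- is precisely the content of {\cite[Lemma~4.3]{AbKa}}, so nothing is missing relative to the paper's own treatment.
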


\begin{proof}
The claim follows from the proofs of {\cite[Lemma~4.3]{AbKa}} and {\cite[Proposition~4.4]{AbKa}}.
\end{proof}

Combining Theorem~\ref{toroidalred} with Lemmas~\ref{subdivmodif}, ~\ref{subdivtoroidal}, ~\ref{toroidalmorproperty}~(2) and ~\ref{equidimsubdiv},
the following equidimensional reduction theorem holds:

\begin{thm}[Equidimensional reduction] \label{equidimensionalred}
Let $f\colon X\to B$ be a projective surjective morphism between analytic varieties.
Let $Z$ be a proper analytic subset of $X$.
Assume that $B$ is projective over a Stein space $S$ and fix any Stein compact subset $W$ of $S$.
Then after shrinking $S$ around $W$, there exist a projective resolutions $m_{B}\colon B'\to B$, a projective modification $m_{X'}\colon X'\to X$, a morphism $f'\colon X'\to B'$ with
$m_{B}\circ f'=f\circ m_{X}$ and reduced divisors $\Delta_{B'}\subseteq B'$ and $\Delta_{X'}\subseteq X'$ such that the following holds.

\smallskip

\noindent
(i) $(B', \Delta_{B'})$ and $(X', \Delta_{X'})$ are strict toroidal pairs.

\smallskip

\noindent
(ii) $f'$ is an equidimensional toroidal morphism.

\smallskip

\noindent
(iii) The union of $m_{X}^{-1}(Z)$ and the $m_{X}$-exceptional set forms a divisor contained in $\Delta_{X'}$.
\end{thm}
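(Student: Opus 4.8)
The plan is to perform the toroidal reduction of Theorem~\ref{toroidalred} once and for all, and then to repair the only missing property, equidimensionality of the map, entirely on the level of the associated polyhedral complexes, transporting the resulting combinatorial modification back to the analytic category through the dictionary of Lemmas~\ref{subdivmodif}, \ref{subdivtoroidal} and \ref{toroidalmorproperty}. First I would apply Theorem~\ref{toroidalred} to $f$ and $Z$: after shrinking $S$ around $W$ this yields projective resolutions $B_{1}\to B$ and $m\colon X_{1}\to X$, a toroidal morphism $f_{1}\colon (X_{1},\Delta_{X_{1}})\to (B_{1},\Delta_{B_{1}})$ between non-singular strict toroidal pairs, and the containment $m^{-1}(Z)\cup\Exc\subseteq\Delta_{X_{1}}$. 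Because $S$ has been shrunk around the Stein compact set $W$, the complexes $\Sigma_{X_{1}}$ and $\Sigma_{B_{1}}$ have finitely many cones, so all the combinatorial results above apply to the morphism $\varphi_{f_{1}}\colon \Sigma_{X_{1}}\to\Sigma_{B_{1}}$ and no further shrinking is needed thereafter.

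Next I would apply Lemma~\ref{equidimsubdiv} to $\varphi_{f_{1}}$ to obtain a projective resolution $\Sigma_{B'}\to\Sigma_{B_{1}}$ for which the base change $\Sigma_{X'}:=\Sigma_{X_{1}}\times_{\Sigma_{B_{1}}}\Sigma_{B'}\to\Sigma_{B'}$ is equidimensional. Since the leg $\Sigma_{B'}\to\Sigma_{B_{1}}$ is a subdivision, hence an alteration, the fiber-product description of polyhedral complexes recalled earlier gives $|\Sigma_{X'}|=|\Sigma_{X_{1}}|$, so that $\Sigma_{X'}$ is a subdivision of $\Sigma_{X_{1}}$ while $\Sigma_{B'}$ is a subdivision of $\Sigma_{B_{1}}$. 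By Lemma~\ref{subdivmodif} these two subdivisions correspond to proper toroidal modifications $B'\to B_{1}$ and $X'\to X_{1}$ with strict toroidal pairs $(B',\Delta_{B'})$ and $(X',\Delta_{X'})$, which is (i); moreover $B'$ is smooth because $\Sigma_{B'}$ is non-singular, so $B'\to B$ is a resolution. The fiber-product structure forces the projection $\Sigma_{X'}\to\Sigma_{B'}$ to send each cone into a cone, so the lifting criterion of Lemma~\ref{subdivtoroidal} is satisfied and $f_{1}$ lifts to a toroidal morphism $f'\colon X'\to B'$ whose associated map $\varphi_{f'}$ is exactly this projection. As the projection is equidimensional, Lemma~\ref{toroidalmorproperty}~(2) gives that $f'$ is an equidimensional toroidal morphism, which is (ii).

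It then remains to record projectivity and the boundary condition (iii). The subdivision $\Sigma_{B'}\to\Sigma_{B_{1}}$ is projective by construction, with an associated good function $\psi$; pulling $\psi$ back along the cone map $\varphi_{f_{1}}$ produces $\psi\circ\varphi_{f_{1}}$, which is a good function for the fiber-product subdivision $\Sigma_{X'}\to\Sigma_{X_{1}}$, since convexity and piecewise linearity are preserved by precomposition with each linear $\varphi_{f_{1}}|_{\sigma}$ and the maximal linearity pieces pull back to the fiber-product cones. Hence that subdivision is projective as well, and by the earlier criterion that a toroidal modification is projective exactly when the corresponding subdivision is, both $B'\to B_{1}$ and $X'\to X_{1}$ are projective; composing with the projective morphisms furnished by Theorem~\ref{toroidalred} shows that $m_{B}\colon B'\to B$ is a projective resolution and $m_{X'}\colon X'\to X$ a projective modification. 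Finally, since $X'\to X_{1}$ is toroidal, $\Delta_{X'}$ is the preimage of $\Delta_{X_{1}}$ and the new exceptional divisors lie in $\Delta_{X'}$, so together with $m^{-1}(Z)\cup\Exc\subseteq\Delta_{X_{1}}$ this gives (iii); here the simple normal crossing property is dropped, as is expected after an equidimensionalizing subdivision.

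I expect the only genuinely delicate point to be that the single subdivision $\Sigma_{X'}$ must play two roles simultaneously: it has to be the fiber product realizing equidimensionality of $f'$ over $\Sigma_{B'}$ and, at the same time, a \emph{projective} subdivision of $\Sigma_{X_{1}}$ through which $\varphi_{f'}$ factors. The good-function pullback $\psi\circ\varphi_{f_{1}}$ is precisely what reconciles these two demands, and verifying that it indeed cuts out the fiber-product cones (rather than a coarser or finer decomposition) is the step where the argument has real content; everything else is the formal translation between the toroidal and the combinatorial sides already packaged in the cited lemmas.
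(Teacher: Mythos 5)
Your proposal is correct and follows exactly the route the paper takes: its proof of Theorem~\ref{equidimensionalred} is precisely the combination of Theorem~\ref{toroidalred} with Lemmas~\ref{equidimsubdiv}, \ref{subdivmodif}, \ref{subdivtoroidal} and \ref{toroidalmorproperty}~(2), which is what you carry out. The details you supply --- in particular that the pullback $\psi\circ\varphi_{f_{1}}$ of the good function is a good function for the fiber-product subdivision, and that condition (iii) loses the simple normal crossing property --- are accurate fillings-in of the steps the paper leaves implicit.
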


\subsection{Weakly semistable reduction}

\begin{lem}[cf.\ {\cite[Proposition~5.10]{AbKa}}] \label{weaksemistalteration}
Let $\varphi\colon \Sigma\to T$ be a morphism of rational conical polyhedral complexes.
Then there exists a projective alteration $T'\to T$ such that the induced morphism $\Sigma\times_{T}T'\to T'$ is weakly semistable.
\end{lem}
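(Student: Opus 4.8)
The plan is to follow the strategy of \cite[Proposition~5.10]{AbKa}, splitting the three defining conditions of weak semistability into two independent operations on $T$: a projective subdivision, which produces equidimensionality together with non-singularity of the base, and a lattice alteration, which forces surjectivity of the lattice maps. Since an alteration is by definition a lattice alteration followed by a subdivision, and the subdivision we use will be projective, the resulting composite $T'\to T$ is automatically a projective alteration; it therefore suffices to perform these two operations so that conditions (i)--(iii) hold for the final projection.

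First I would apply Lemma~\ref{equidimsubdiv} to $\varphi\colon \Sigma\to T$ to obtain a projective resolution $T_{1}\to T$ for which $\Sigma_{1}:=\Sigma\times_{T}T_{1}\to T_{1}$ is equidimensional; being a resolution, $T_{1}$ is non-singular. After this step conditions (i) and (iii) are secured, and both are manifestly preserved under any subsequent lattice alteration of $T_{1}$, since a lattice alteration leaves the underlying space and all cones unchanged and only shrinks the lattices. Thus it remains to arrange condition (ii).

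For the second step, fix a cone $\tau$ of $T_{1}$ and a cone $\sigma$ of $\Sigma_{1}$ with $\varphi(\sigma)=\tau$. Equidimensionality says that $\varphi(\sigma)=\tau$ is a cone of $T_{1}$, so $\varphi_{\sigma,\R}(\sigma)=\tau$ spans $N_{\tau,\R}$; hence $\varphi_{\sigma,\R}\colon N_{\sigma,\R}\to N_{\tau,\R}$ is surjective and $\varphi_{\sigma}(N_{\sigma})$ is a finite-index sublattice of $N_{\tau}$. Let $m$ be the least common multiple, over all such pairs $(\sigma,\tau)$, of the exponents of the finite groups $N_{\tau}/\varphi_{\sigma}(N_{\sigma})$, so that $m\,N_{\tau}\subseteq \varphi_{\sigma}(N_{\sigma})$ for every $\sigma$ with $\varphi(\sigma)=\tau$ (here I use that, after the shrinking performed throughout the paper, the complexes in question have finitely many cones). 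Because $m\,N_{\tau'}=(m\,N_{\tau})\cap \mathrm{Span}(\tau')$ for every face $\tau'\preceq\tau$, the assignment $\tau\mapsto m\,N_{\tau}$ satisfies the face compatibility required of a polyhedral complex and hence defines a genuine lattice alteration $T'\to T_{1}$. I stress that using the \emph{scaled} lattice $m\,N_{\tau}$, rather than an arbitrary finite-index sublattice, is exactly what keeps $T'$ non-singular: scaling a $\Z$-basis $e_{1},\dots,e_{k}$ of $N_{\tau}$ by $m$ gives a $\Z$-basis $m e_{1},\dots,m e_{k}$ of $m\,N_{\tau}$ that still generates $\tau$.

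Finally I would check that $\Sigma\times_{T}T'=\Sigma_{1}\times_{T_{1}}T'\to T'$ is weakly semistable. By the description of fibre products of polyhedral complexes recorded earlier, a cone of $\Sigma_{1}\times_{T_{1}}T'$ lying over $\tau$ is $\sigma$ equipped with the lattice $m\,N_{\tau}\times_{N_{\tau}}N_{\sigma}$, and the new lattice map to $T'$ is the projection $m\,N_{\tau}\times_{N_{\tau}}N_{\sigma}\to m\,N_{\tau}$ to the first factor, whose image is $m\,N_{\tau}\cap \varphi_{\sigma}(N_{\sigma})$. This projection is surjective precisely when $m\,N_{\tau}\subseteq \varphi_{\sigma}(N_{\sigma})$, which holds by the choice of $m$; this yields (ii). Condition (i) persists since the underlying map on $|\Sigma|$ is unchanged, and (iii) holds by the non-singularity of $T'$ established above. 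The main obstacle is the bookkeeping in this second step: one must pick the scaling factor uniformly so that the sublattices are compatible along all faces and glue to a bona fide lattice alteration preserving non-singularity, and then confirm the fibre-product lattice computation giving surjectivity. The geometric content of this step is exactly the combinatorial form of Kawamata's covering trick recorded in Lemma~\ref{latticealt}.
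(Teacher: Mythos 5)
Your proof is correct, and it shares the paper's two-step skeleton (first apply Lemma~\ref{equidimsubdiv} to get an equidimensional morphism over a non-singular $T_{1}$, then perform a lattice alteration to force surjectivity of the lattice maps), but it diverges at the key step: the paper defines the new lattice canonically as $N'_{\tau}:=\bigcap_{\sigma:\varphi(\sigma)=\tau}\varphi_{\sigma}(N_{\sigma})$, the maximal sublattice contained in all image lattices, whereas you take the uniform scaling $N'_{\tau}:=m\,N_{\tau}$. Your choice buys something real: non-singularity of $T'$ (condition (iii) of weak semistability, which is also needed downstream to invoke Lemma~\ref{latticealt}) is automatic for $m\,N_{\tau}$, since scaling a $\Z$-basis generating $\tau$ gives a $\Z$-basis of $m\,N_{\tau}$ generating $\tau$; for a general finite-index sublattice such as the paper's intersection this can genuinely fail (e.g.\ the first quadrant with the index-two sublattice of $\Z^{2}$ generated by $(2,0)$ and $(1,1)$ is not smooth), so the paper's version leaves (iii) to be checked or restored. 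The price you pay is the need for a single global $m$, hence finiteness of the set of cones, which you correctly flag and which is harmless in the paper's applications (the complexes are finite after the shrinking). Two small points: your sentence asserting that condition (iii) is ``manifestly preserved under any subsequent lattice alteration'' is false as stated --- shrinking lattices can destroy non-singularity, which is precisely why you later argue via the scaled basis --- so you should delete that clause and rely only on the argument you actually give; and the face-compatibility check $m\,N_{\tau'}=(m\,N_{\tau})\cap\mathrm{Span}(\tau')$, the fibre-product lattice $N_{\sigma}\times_{N_{\tau}}m N_{\tau}$, and the identification of the image of the projection with $\varphi_{\sigma}(N_{\sigma})\cap m N_{\tau}$ are all correct as you wrote them.
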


\begin{proof}
By Lemma~\ref{equidimsubdiv}, replacing $\varphi$ with its base change to a projective resolution of $T$,
we may assume that $\varphi$ is equidimensional.
For each cone $\sigma\in \Sigma$, the image $\varphi(\sigma)$ is a cone in $T$ and the image of the lattice homomorphism is a sublattice $\varphi_{\sigma}(N_{\sigma})\subseteq N_{\varphi(\sigma)}$.
We define a rational conical polyhedral complex $T'$ by the data $\{(\tau, N'_{\tau})\}_{\tau\in T}$, where the lattice $N'_{\tau}$ is defined by the intersection of $\varphi_{\sigma}(N_{\sigma})$ for all $\sigma\in \Sigma$ with $\tau=\varphi(\sigma)$.
Then the natural morphism $T'\to T$ is a lattice alteration and the induced morphism $\Sigma\times_{T}T'\to T'$ is weakly semistable.
\end{proof}

The problem for translating Lemma~\ref{weaksemistalteration} into the language of toroidal morphisms is to realize a toroidal morphism corresponding to a lattice alteration.
One (weak) answer is to use Kawamata's covering trick (Lemma~\ref{Kawamatacov}, Lemma~\ref{latticealt}).

\begin{thm}[Weak semistable reduction] \label{weaksemistred}
Let $f\colon X\to B$ be a projective surjective morphism between analytic varieties.
Let $Z$ be a proper analytic subset of $X$.
Assume that $B$ is projective over a Stein space $S$ and fix any Stein compact subset $W$ of $S$.
Then after shrinking $S$ around $W$, there exist a projective alteration $B'\to B$ from a smooth variety $B'$, a projective modification $X'\to X\times_{B}B'$ from a normal variety $X'$ onto the main component of $X\times_{B}B'$ and reduced divisors $\Delta_{B'}\subseteq B'$ and $\Delta_{X'}\subseteq X'$ such that the following hold:

\smallskip

\noindent
(i) $(B', \Delta_{B'})$ and $(X', \Delta_{X'})$ are strict toroidal pairs.

\smallskip

\noindent
(ii) The projection $f'\colon (X', \Delta_{X'})\to (B', \Delta_{B'})$ is a weakly semistable toroidal morphism.

\smallskip

\noindent
(iii) The union of the inverse image of $Z$ and the exceptional set on $X'$ over $X$ forms a divisor contained in $\Delta_{X'}$.
\end{thm}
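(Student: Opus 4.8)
The plan is to deduce Theorem~\ref{weaksemistred} from the equidimensional reduction together with the purely combinatorial statement of Lemma~\ref{weaksemistalteration}, realizing the required alteration of polyhedral complexes by geometric operations on toroidal pairs. First I would apply Theorem~\ref{equidimensionalred} to $f\colon X\to B$ and $Z$; after shrinking $S$ around $W$ this reduces the problem to the situation where $f\colon(X,\Delta_X)\to(B,\Delta_B)$ is an equidimensional toroidal morphism between strict toroidal pairs with $B$ smooth (so $T:=\Sigma_{(B,\Delta_B)}$ is non-singular) and the union of the inverse image of $Z$ and the exceptional set is contained in $\Delta_X$. By Lemma~\ref{toroidalmorproperty}~(2) the associated map $\varphi_f\colon\Sigma_{(X,\Delta_X)}\to T$ is equidimensional. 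Applying Lemma~\ref{weaksemistalteration} to $\varphi_f$ yields a projective alteration $T'\to T$ for which $\Sigma_{(X,\Delta_X)}\times_T T'\to T'$ is weakly semistable; in particular $T'$ is non-singular. By definition this alteration factors as $T'\xrightarrow{\lambda}T_1\xrightarrow{\pi}T$ with $\pi$ a projective subdivision and $\lambda$ a lattice alteration, and since $\lambda$ is the identity on the underlying space, $T_1$ has the same cones as the non-singular complex $T'$ and is therefore simplicial.

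Next I would realize $\pi$ geometrically. Since $(B,\Delta_B)$ is a strict toroidal pair with $\Sigma_{(B,\Delta_B)}=T$, the projective subdivision $\pi$ corresponds by Lemma~\ref{subdivmodif} (together with the criterion identifying projective toroidal modifications with projective subdivisions) to a projective toroidal modification $B_1\to B$ with $\Sigma_{(B_1,\Delta_{B_1})}=T_1$; thus $(B_1,\Delta_{B_1})$ is a simplicial strict toroidal pair, projective over $S$. To lift $f$, I would form the pulled-back subdivision $\Sigma_{(X_1,\Delta_{X_1})}:=\Sigma_{(X,\Delta_X)}\times_T T_1$, which by Lemma~\ref{subdivmodif} yields a projective toroidal modification $X_1\to X$, and by Lemma~\ref{subdivtoroidal} a toroidal lift $f_1\colon(X_1,\Delta_{X_1})\to(B_1,\Delta_{B_1})$ with $\varphi_{f_1}=\varphi_f\times_T T_1$.

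Then I would realize the lattice alteration $\lambda\colon T'\to T_1=\Sigma_{(B_1,\Delta_{B_1})}$ by Kawamata's covering trick. Applying Lemma~\ref{latticealt} to the simplicial pair $(B_1,\Delta_{B_1})$, the non-singular complex $T'$ and $\lambda$, after shrinking $S$ around $W$ I obtain a finite abelian covering $B'\to B_1$ with $(B',\Delta_{B'})$ a non-singular strict toroidal pair and $\Sigma_{(B',\Delta_{B'})}\to T'$ an isomorphism on every cone and lattice; part~(4) of that lemma, applied to the toroidal morphism $f_1$, produces $X':=$ the normalization of $X_1\times_{B_1}B'$ together with a toroidal morphism $f'\colon(X',\Delta_{X'})\to(B',\Delta_{B'})$ and a Cartesian diagram of polyhedral complexes identifying $\Sigma_{(X',\Delta_{X'})}$, cone- and lattice-wise, with $\Sigma_{(X_1,\Delta_{X_1})}\times_{T_1}T'=\Sigma_{(X,\Delta_X)}\times_T T'$. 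Since the latter maps weakly semistably to $T'$, the map $\varphi_{f'}$ is weakly semistable, so $f'$ is a weakly semistable toroidal morphism, and both pairs are strict toroidal by construction. Condition~(iii) then follows by tracking the inclusion of the inverse image of $Z$ and the exceptional set into $\Delta_X$ through $\Delta_{X_1}=$ (preimage of $\Delta_X$) and $\Delta_{X'}=$ (preimage of $\Delta_{X_1}$), together with the fact that all newly created exceptional divisors lie in the boundary.

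Finally I would check the global structure. The morphism $B'\to B$ is the composite of the finite covering $B'\to B_1$ and the projective modification $B_1\to B$, hence a projective alteration from the smooth variety $B'$; and $X'\to X\times_B B'$, induced by $X'\to X_1\to X$ and $f'$, is a projective modification onto the main component, since $X_1\to X$ is a projective modification and $B'\to B_1$ is finite. The main obstacle is precisely this geometric realization of the lattice alteration: one must verify, via Lemma~\ref{latticealt}, that the normalized fiber product $X'$ carries the predicted toroidal structure and that its polyhedral complex is \emph{exactly} the fiber product $\Sigma_{(X,\Delta_X)}\times_T T'$ rather than merely a further modification of it, because it is this identification that transports weak semistability from the combinatorial model to $f'$. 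The necessity of shrinking $S$ around $W$ enters here, through the existence of the finite abelian covering asserted in Lemma~\ref{latticealt}.
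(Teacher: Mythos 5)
Your proposal is correct and follows essentially the same route as the paper: equidimensional reduction via Theorem~\ref{equidimensionalred}, the combinatorial Lemma~\ref{weaksemistalteration}, and Kawamata's covering trick (Lemma~\ref{latticealt}) to realize the lattice alteration geometrically. The only difference is that the paper observes that, once $\varphi_{f}$ is equidimensional over a non-singular $T$, the proof of Lemma~\ref{weaksemistalteration} already produces a \emph{pure} lattice alteration $T'\to T$, so your intermediate step of factoring off a subdivision $T_{1}\to T$ and realizing it as a toroidal modification $B_{1}\to B$ is harmless but unnecessary.
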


\begin{proof}
By Theorem~\ref{equidimensionalred}, we may assume that $(B, \Delta_{B})$ is a non-singular strict toroidal pair for some divisor $\Delta_{B}$, $(X, \Delta_{X}:=Z)$ is strict toroidal and $f\colon X\to B$ is an equidimensional toroidal morphism between them.
Let $T:=\Sigma_{(B, \Delta_{B})}$ and $\Sigma:=\Sigma_{(X, \Delta_{X})}$ denote the associated polyhedral complexes.
Then by Lemma~\ref{weaksemistalteration} and its proof, there exists a lattice alteration $T'\to T$ such that the induced morphism $\Sigma \times_{T}T'\to T'$ is weakly semistable.
Applying Lemma~\ref{latticealt} to $T'\to T$, there exists a finite abelian covering $\pi\colon B'\to B$ such that the normalization $X'$ of $X\times_{B}B'$ admits a toroidal morphism 
$$
f'\colon (X',\Delta_{X'}:=\pi'^{-1}(\Delta_{X})_{\mathrm{red}})\to (B', \Delta_{B'}:=\pi^{-1}(\Delta_{B})_{\mathrm{red}})
$$
between strict toroidal pairs, where $\pi'\colon X'\to X$ is the projection.
Moreover, $B'$ is non-singular and $f'$ is weakly semistable since $\Sigma_{(B', \Delta_{B'})}\to T'$ and $\Sigma_{(X', \Delta_{X'})}\to \Sigma \times_{T}T'$ is isomorphic at each cone and each lattice and hence $\Sigma_{(X', \Delta_{X'})}\to \Sigma_{(B', \Delta_{B'})}$ is weakly semistable.
\end{proof}

\subsection{Semistable reduction}
Recently, Adiprasito, Liu and Temkin proved the following polyhedral conjecture of Abramovich and Karu:

\begin{thm}[{\cite[Conjecture~8.4]{AbKa}}, {\cite[Theorem~2.7]{ALT}}] \label{ALTthm}
Let $\varphi\colon \Sigma\to T$ be a morphism of rational conical polyhedral complexes.
Then there exists a projective alteration $T'\to T$ and a projective subdivision $\Sigma'\to \Sigma\times_{T}T'$ such that the induced morphism $\Sigma'\to T'$ is semistable.
\end{thm}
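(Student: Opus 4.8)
The plan is to split the statement into a formal reduction, handled by the results above, and one genuinely hard combinatorial input. First I would apply Lemma~\ref{weaksemistalteration}: after replacing $T$ by a suitable projective alteration $T'\to T$ and setting $\Sigma_{1}:=\Sigma\times_{T}T'$, the induced morphism $\varphi\colon \Sigma_{1}\to T'$ is weakly semistable. By definition this means that $\varphi$ is equidimensional, that each lattice map $\varphi_{\sigma}\colon N_{\sigma}\to N_{\varphi(\sigma)}$ is surjective, and that the base $T'$ is non-singular. The only condition separating $\varphi$ from semistability is therefore the non-singularity of the source, so the task reduces to finding a projective subdivision $\Sigma'\to \Sigma_{1}$ of the source alone such that $\Sigma'$ is non-singular and $\Sigma'\to T'$ remains weakly semistable; this morphism will then be semistable, and since $\Sigma_{1}=\Sigma\times_{T}T'$ it furnishes exactly the data asked for in the statement.

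To produce such a subdivision I would work cone by cone. Fix $\sigma\in \Sigma_{1}$ with image $\tau:=\varphi(\sigma)$, a non-singular cone of $T'$, and surjective lattice map $\varphi_{\sigma}\colon N_{\sigma}\to N_{\tau}$. What is required is a non-singular subdivision of $\sigma$ that, first, refines the preimages $\varphi_{\sigma}^{-1}(\tau_{0})$ of all faces $\tau_{0}\preceq \tau$, so that every new cone maps onto a face of $\tau$ and equidimensionality is preserved, and second, is such that each new cone $\sigma'$ still has surjective lattice map $N_{\sigma'}\to N_{\varphi(\sigma')}$, so that weak semistability (equivalently, reducedness of fibers, cf.\ Lemma~\ref{toroidalmorproperty}) is preserved. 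These local subdivisions must be chosen compatibly on overlaps so that they glue to a subdivision of the whole complex $\Sigma_{1}$, and projectivity demands that the entire subdivision be induced by a single convex piecewise-linear good function on $|\Sigma_{1}|$.

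The hard part is exactly the existence of such a subdivision of a single fibered cone that is simultaneously non-singular, equidimensional over $\tau$, and reduced along every face; this is the polyhedral conjecture of \cite{AbKa}. No elementary resolution argument suffices here, because the standard star subdivisions that make $\sigma$ non-singular generically destroy equidimensionality, sending subcones to cones that are not faces of $\tau$, or else drop the lattice image to a proper sublattice and thereby create non-reduced fibers. At this step I would invoke the combinatorial theorem of \cite{ALT}, which constructs the desired fiberwise non-singular subdivision; the whole difficulty of the statement is concentrated in reconciling non-singularity of the cones with the simultaneous preservation of equidimensionality and reducedness of the fibers, and everything else in the proof is formal.

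Finally, assembling the local pieces into a global projective subdivision $\Sigma'\to \Sigma_{1}$, the induced morphism $\Sigma'\to T'$ is equidimensional with surjective lattice maps over the non-singular base $T'$ and has non-singular source, hence is semistable by definition. Together with the projective alteration $T'\to T$ of the first step, this yields both the projective alteration $T'\to T$ and the projective subdivision $\Sigma'\to \Sigma\times_{T}T'$ required, completing the argument.
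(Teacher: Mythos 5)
The paper does not prove this statement at all: Theorem~\ref{ALTthm} is imported verbatim as an external result, namely the polyhedral conjecture of Abramovich--Karu ({\cite[Conjecture~8.4]{AbKa}}) as resolved by Adiprasito, Liu and Temkin ({\cite[Theorem~2.7]{ALT}}). Your proposal ultimately does the same thing --- the ``hard combinatorial input'' you isolate and then invoke from \cite{ALT} \emph{is} the theorem being stated --- so as a citation-plus-context your sketch is consistent with how the paper treats the result, and you correctly identify why no elementary resolution argument works (star subdivisions destroy equidimensionality or drop the lattice image).

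One substantive caveat about your intermediate reduction: after applying Lemma~\ref{weaksemistalteration} to reach a weakly semistable $\Sigma_{1}\to T'$, you assert that the remaining task is a projective subdivision of the \emph{source alone}. That is not something you are entitled to assume, and it is not how the combinatorial theorem is actually structured: in general one must further subdivide \emph{both} the source and the base and perform additional lattice alterations of the base before the source can be made non-singular while keeping equidimensionality and surjective lattice maps. This is harmless for the statement as packaged here --- all further base modifications can be absorbed into the single projective alteration $T'\to T$ (an alteration is by definition a lattice alteration followed by a subdivision, and compositions of projective alterations are projective alterations), and the source subdivision is then taken of the pullback $\Sigma\times_{T}T'$ --- but your framing that ``everything else in the proof is formal'' and that only a fiberwise non-singular subdivision of a fixed cone over a fixed $\tau$ is missing understates what \cite{ALT} actually has to do. If you intend this as a genuine proof rather than a gloss on the citation, the gap is precisely there: you have not reduced the theorem to a strictly smaller combinatorial claim, only restated it.
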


Using the same arguments as in the proof of Theorem~\ref{weaksemistred} by replacing Lemma~\ref{weaksemistalteration} with Theorem~\ref{ALTthm}, the following semistable reduction theorem holds:

\begin{thm}[Semistable reduction] \label{semistred}
Let $f\colon X\to B$ be a projective surjective morphism between analytic varieties.
Let $Z$ be a proper analytic subset of $X$.
Assume that $B$ is projective over a Stein space $S$ and fix any Stein compact subset $W$ of $S$.
Then after shrinking $S$ around $W$, there exist a projective alterations $B'\to B$ from a smooth variety $B'$, a projective modification $X'\to X\times_{B}B'$ from a smooth variety $X'$ onto the main component of $X\times_{B}B'$ and reduced divisors $\Delta_{B'}\subseteq B'$ and $\Delta_{X'}\subseteq X'$ such that the following hold:

\smallskip

\noindent
(i) $(B', \Delta_{B'})$ and $(X', \Delta_{X'})$ are strict toroidal pairs.

\smallskip

\noindent
(ii) The projection $f'\colon (X', \Delta_{X'})\to (B', \Delta_{B'})$ is a semistable toroidal morphism.

\smallskip

\noindent
(iii) The union of the inverse image of $Z$ and the exceptional set on $X'$ over $X$ forms a divisor contained in $\Delta_{X'}$.
\end{thm}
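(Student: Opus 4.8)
The plan is to run the proof of Theorem~\ref{weaksemistred} almost verbatim, feeding in Theorem~\ref{ALTthm} in place of Lemma~\ref{weaksemistalteration}; the only genuinely new feature is that Theorem~\ref{ALTthm} produces, besides an alteration of the base, a projective subdivision of the total space, which must also be realized geometrically. First I would apply Theorem~\ref{equidimensionalred} to reduce, after shrinking $S$ around $W$, to the case where $(B,\Delta_{B})$ is a non-singular strict toroidal pair, $(X,\Delta_{X})$ is strict toroidal with $\Delta_{X}$ containing $Z$ together with the exceptional set, and $f\colon X\to B$ is an equidimensional toroidal morphism. Writing $T:=\Sigma_{(B,\Delta_{B})}$, $\Sigma:=\Sigma_{(X,\Delta_{X})}$ and $\varphi_{f}\colon \Sigma\to T$ for the induced morphism of polyhedral complexes, I apply Theorem~\ref{ALTthm} to obtain a projective alteration $T'\to T$ together with a projective subdivision $\Sigma'\to \Sigma\times_{T}T'$ for which the composite $\Sigma'\to T'$ is semistable.

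The next step is to realize the base alteration $T'\to T$ by an honest projective alteration $B'\to B$ from a smooth variety. By definition $T'\to T$ factors as a lattice alteration $T'\to T_{1}$ followed by a projective subdivision $T_{1}\to T$. The subdivision $T_{1}\to T$ is realized by a projective toroidal modification $B_{1}\to B$ via Lemma~\ref{subdivmodif} and the criterion that a toroidal modification is projective exactly when its subdivision is; since $f$ is equidimensional, Lemma~\ref{subdivtoroidal} lets me lift $f$ to an equidimensional toroidal morphism $f_{1}\colon X_{1}\to B_{1}$ with $\varphi_{f_{1}}\colon \Sigma\times_{T}T_{1}\to T_{1}$, where $X_{1}$ is a projective toroidal modification of the main component of $X\times_{B}B_{1}$. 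The remaining lattice alteration $T'\to T_{1}$ is then realized, after a further shrinking of $S$ around $W$, by Kawamata's covering trick: Lemma~\ref{latticealt} produces a finite abelian covering $B'\to B_{1}$ with $B'$ smooth, and by Lemma~\ref{latticealt}~(4) the normalization $X''$ of the main component of $X_{1}\times_{B_{1}}B'$ carries a toroidal morphism to $B'$ whose associated polyhedral morphism $\Sigma_{(X'',\Delta_{X''})}\to T'$ is identified, at each cone and each lattice, with $\Sigma\times_{T}T'\to T'$.

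It then remains to realize the total-space subdivision $\Sigma'\to \Sigma\times_{T}T'\cong \Sigma_{(X'',\Delta_{X''})}$. By Lemma~\ref{subdivmodif} together with the projectivity criterion, it corresponds to a projective toroidal modification $X'\to X''$, and because the semistable morphism $\Sigma'\to T'$ is in particular equidimensional, each cone of $\Sigma'$ maps into a cone of $T'$, so Lemma~\ref{subdivtoroidal} guarantees that the induced map $f'\colon X'\to B'$ is toroidal with $\varphi_{f'}$ equal to this semistable morphism $\Sigma'\to T'$. Lemma~\ref{toroidalmorproperty}~(4) then shows $f'$ is semistable; the non-singularity of $\Sigma'$ and $T'$ in the definition of semistable forces $X'$ and $B'$ to be smooth, giving (i) and (ii). The composite $X'\to X''\to X\times_{B}B'$ is a projective modification onto the main component, and since $\Delta_{X}$ already contained $Z$ and the exceptional set and every subsequent operation pulls $\Delta_{X}$ back into the boundary, the inverse image of $Z$ together with the total exceptional set over $X$ lies in $\Delta_{X'}$, giving (iii).

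The main obstacle is not the combinatorics, which is settled wholesale by Theorem~\ref{ALTthm}, but the faithful analytic realization of the base alteration while staying inside the Stein/projective shrinking framework. Concretely, the delicate point is the lattice-alteration step: one must produce the finite abelian covering of Lemma~\ref{latticealt}, control the normalized base change $X''$, and verify the isomorphism of polyhedral complexes, all after shrinking $S$ around $W$ so that only finitely many boundary components intervene; this rests on the analytic Abhyankar lemma and the theory of finite coverings developed in Section~\ref{sec2}. By contrast, the additional total-space subdivision, though absent from the weakly semistable case, is handled routinely by Lemmas~\ref{subdivmodif} and \ref{subdivtoroidal} once the covering is in place, and semistability is then read off directly via Lemma~\ref{toroidalmorproperty}~(4).
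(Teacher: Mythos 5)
Your proposal is correct and follows essentially the same route as the paper: the paper's own proof is literally to rerun the argument of Theorem~\ref{weaksemistred} with Theorem~\ref{ALTthm} substituted for Lemma~\ref{weaksemistalteration}, which is exactly what you do. Your additional care in splitting the base alteration into its subdivision and lattice-alteration parts (realized respectively by Lemmas~\ref{subdivmodif}/\ref{subdivtoroidal} and \ref{latticealt}) and in realizing the extra total-space subdivision is precisely the detail the paper leaves implicit, and it is carried out correctly.
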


\subsection{Reduction to the algebraic case}\label{subsecreduction}
In this subsection, we give an alternate proof of Theorem~\ref{Introsemistred} by reducing to the algebraic case, which is taught by Dan Abramovich. 

Let $f\colon X\to B$ be a projective morphism between complex analytic varieties and assume that $B$ is projective over a Stein space $S$.
Let $W$ be any Stein compact subset of $S$.
Applying Hironaka's flattening and Lemma~\ref{projemb}, we may assume that $X$ is an analytic subspace of $B\times \mathbb{P}^{N}$ which is flat over $B$ after shrinking $S$ around $W$.
This provides a morphism $\rho\colon B\to \mathrm{Hilb}(\mathbb{P}^{N})$ between analytic spaces.
Let $H$ denote the Zariski closure of its image $\rho(B)$.
Note that $H$ is a projective variety and the inclusion $\rho(B)\subseteq H$ may not be dense in the usual topology.
Let $g\colon U\to H$ denote the restriction of the universal family to $H$.
Let us take a weak semistable reduction 
$$
g_{1}\colon (U_1,\Delta_{U_{1}})\to (H_1, \Delta_{H_{1}})
$$
of $g$ in the sense of \cite{AbKa} and \cite{Kar}.
Note that the discriminant of the alteration $H_{1}\to H$ does not contain the image $\rho(B)$ since $H$ is the Zariski closure of $\rho(B)$.
Hence we can take a projective alteration  $B_{1} \to B$ from a smooth variety $B_{1}$ which factors through $B\times_{H}H_{1}$ (Indeed, take $B_{1}$ as a projective resolution of an irreducible component of $B_{1}\times_{H}H_{1}$ which dominates $B$).
By taking further modifications, we may assume that the pullback of $\Delta_{H_{1}}$ to $B_{1}$ is normal crossings.
Applying {\cite[Lemma~8.3]{Kar}}, the base change $X_{1}:=U_{1}\times_{H_{1}}B_{1}\to B_{1}$ is also weakly semistable, which is a weak semistable reduction of $f$.
Applying \cite{ALT}, we obtain a semistable reduction $f'\colon X'\to B'$ by taking a further base change $B'\to B_{1}$ and a modification $X'\to X_{1}\times_{B_{1}}B'$.

\end{document}